\theoremstyle{plain}
\newtheorem{theorem}{\scshape Theorem}
\newtheorem{lemma}{\scshape Lemma}
\newtheorem{corollary}{\scshape Corollary}
\theoremstyle{definition}
\newtheorem{definition}{\scshape Definition}
\newtheorem{remark}{\scshape Remark}
\DeclareMathAlphabet{\mathpzc}{OT1}{pzc}{m}{it}
\DeclareMathAlphabet\mathbfcal{OMS}{cmsy}{b}{n}
\definecolor{grey}{rgb}{0.5,0.5,0.5}
\definecolor{lightgrey}{rgb}{0.9,0.9,0.9}
\definecolor{darkgreen}{rgb}{0,0.6,0}
\definecolor{orange}{rgb}{1,0.5,0}
\definecolor{lightpink}{rgb}{1,0.714,0.757}
\definecolor{lightorange}{rgb}{1,0.855,0.725}
\DeclareMathOperator{\spt}{spt}
\DeclareMathOperator{\dist}{dist}
\DeclareMathOperator{\GL}{GL^+(d, \mathbb{R})}
\DeclareMathOperator{\cof}{cof}
\def\bp{{\partial_1}}
\def\p{\partial}
\def\dx{ {\d}_x}
\def\zxg{ \xi_x ^{-\gamma} }
\def\zxgo{ \xi_x ^{-\gamma-1} }
\def\nxg{ \eta_x ^{-\gamma} }
\def\nxgo{ \eta_x ^{-\gamma-1} }
\def\gpo{ {\gamma+1} }
\def\deta{{\delta\hspace{-.03in}\eta} }
\def\aki{{a^k_i}}
\def\bp{ \partial^{\operatorname{tan} }}
\def\bp{{\bar \partial }}
\def\M{ {\widetilde M}}
\def\a{\mathrm{a} }
\def\rr{ \mathbb{R}  }
\def\ss{{\scriptstyle \mathcal{S} }}
\def\dd{{\dist }}
\def\d{{\mathbf{d} }}
\def\R{{\mathrm{R} }}
\def\P{{\mathscr{P} }}
\def\A{{\mathscr{A} }}
\def\B{{\mathscr{B} }}
\def\F{{\mathscr{F} }}
\def\ddd{{\cdot \cdot \cdot }}
\def\at{ \alpha^{-2} }
\def\ah{ \alpha^{-1/2} }
\def\agt{\alpha^{3-\gamma}}
\def\agtt{\alpha^{\frac{3-\gamma}{2}}}
\def\ee{ \boldsymbol{\eta}}
\def\dee{ \boldsymbol{{\delta\hspace{-.03in}\eta}}}
\def\vv{ \boldsymbol{v}}
\def\AA{ \boldsymbol{A}}
\def\aa{ \boldsymbol{a}}
\def\JJ{ \boldsymbol{J}}
\title{{\bf Global existence of near-affine solutions to the compressible Euler equations}}
\author{
 {\small {\bf Steve Shkoller}}
 \vspace{-.05 in}
\\{\footnotesize Department of Mathematics}
\vspace{-.05 in}
\\{\footnotesize University of California}
\vspace{-.05 in}
\\{\footnotesize Davis, CA 95616}
\vspace{-.05 in}
\\{\footnotesize email: {\it shkoller@math.ucdavis.edu}}
\and
{\small {\bf Thomas C. Sideris}}
\vspace{-.05 in}
\\{\footnotesize Department of Mathematics}
\vspace{-.05 in}
\\{\footnotesize University of California}
\vspace{-.05 in}
\\{\footnotesize Santa Barbara, CA 93106}
\vspace{-.05 in}
\\{\footnotesize email: {\it sideris@math.ucsb.edu}}
}
\date{ }
\begin{document}

\maketitle

\begin{abstract} We establish global existence of solutions to the compressible Euler equations, in the case that a finite volume of  ideal gas
expands into vacuum.  Vacuum states can occur with either smooth or singular sound speed, the latter corresponding to the so-called 
{\it physical vacuum singularity} when the enthalpy vanishes on the vacuum wave front like the distance function.   In this instance, the Euler equations
lose hyperbolicity and form a degenerate system of conservation laws, for which a local existence theory has only recently been developed.  
Sideris \cite{Sideris2017} found a class of expanding finite degree-of-freedom global-in-time {\it affine solutions}, obtained by solving 
nonlinear ODEs.   In three space dimensions, the stability of these affine solutions, and hence global existence of solutions,  was established 
by Had\v{z}i\'{c} \& Jang \cite{HaJa2016} with the pressure-density relation $p = \rho^\gamma$ with the constraint  that $1< \gamma\le {\frac{5}{3}} $.  
They asked if a different approach could go beyond the $\gamma > {\frac{5}{3}} $ threshold.    We provide an affirmative answer to their question, and
prove stability of affine flows and global existence for all $\gamma >1$, thus also establishing global existence for the shallow water equations when $\gamma=2$.
\end{abstract}

\section{Introduction}
\label{sec_introduction}
We consider the problem of global existence of solutions to the isentropic  compressible Euler equations of gas dynamics in $d$ space dimensions, with $d=1,2,3$.
The isentropic system consists of two basic conservation laws: the conservation of momentum and the conservation of mass.   Letting $u = (u_1,...,u_d)$ denote
the velocity vector and $\rho$ the density function, conservation of momentum and mass can be written, respectively, as 
$$
\p_t (\rho u)+  \operatorname{div} [ \rho u \otimes u + p(\rho) \operatorname{Id} ]=0 \ \text{ and } \ \p_t \rho + \operatorname{div} (\rho u) =0 \,,
$$
supplemented with the ideal gas law which states that the pressure function $p( \rho) = \rho^\gamma$ for $\gamma >1$.    Both $u$ and $\rho$ depend on the space coordinate
$x=(x_1,...,x_d)$ and time $t\ge 0$.   By definition, a vacuum state exists on the set
 $\{\rho(x,t)=0\}$.   The set $\Omega(t) := \{\rho(x,t) >0\}$ defines the location of the gas, and of particular interest is the evolution of the gas-vacuum boundary 
 $\Gamma(t):= \p  \Omega (t)$, also known as the ``front.''
 
 In the absence of vacuum, when $\rho(x,t) \ge \varrho>0$ in $ \mathbb{R}  ^d$,  the Euler equations are hyperbolic, and when
the initial density and  velocity are constant outside a compact set, classical $C^1$ solutions exist on a (possibly very short) time interval $[0,T]$ 
(see, for example, Majda \cite{Majda1984}).
Sideris \cite{Sideris1985} proved that even for small initial data, if the gas is  slightly compressed and out-going near the front $\Gamma(t)$,
 then $T< \infty $ (see also \cite{Chemin1990b}).
 
 In the presence of vacuum, strict hyperbolicity fails and  local-in-time existence was established in \cite{MaUkKa1986,Chemin1990}.  
As we shall describe below, vacuum states can occur with either smooth or singular sound speed.     For the easier case of smooth sound speed, 
Serre \cite{Serre1997} proved that there exist sufficiently small and smooth  initial density profiles with 
smooth initial velocities close to a linear field that cause gas particles to spread and the velocity to decay, thus leading to global-in-time solutions.  
  
The  case of
singular sound speed, also known as the {\it physical vacuum singularity}, is more difficult to analyze;  a priori estimates were given in 
Coutand, Lindblad, \& Shkoller \cite{CoLiSh2010}, and local existence was established by 
Coutand \& Shkoller \cite{CoSh2011,CoSh2012} and Jang \& Masmoudi \cite{JaMa2009, JaMa2015}. Later 
Coutand \& Shkoller  \cite{CoSh2014}  showed that there exists a large class of physical vacuum solutions  in which the vacuum boundary self-intersects in
finite-time, thus leading to a so-called splash singularity.   On the other hand, 
Sideris \cite{Sideris2017} has constructed a new class of global-in-time {\it affine solutions} for the physical vacuum case, which are
obtained by solving ODEs.  For three-dimensional flows,  Had\v{z}i\'{c} \& Jang \cite{HaJa2016} have constructed global solutions by proving the global
existence of small perturbations to these affine flows for the case that $1< \gamma \le {\frac{5}{3}} $; with their scheme,  if $\gamma > {\frac{5}{3}} $, a
so-called  ``anti-damping'' term
arises and their method is not applicable.  In particular,  Remark 1.3 of \cite{HaJa2016} states: ``It would be interesting to
understand whether one can go beyond the $\gamma > {\frac{5}{3}} $
threshold.''   

The purpose of this paper is to prove that, in fact, global solutions exist for all $\gamma >1$.  We introduce a simple strategy  which shows that there is no upper bound on $\gamma$, and thus prove global existence for the physical vacuum singularity
for the general ideal gas law.   This includes the important case of the shallow water equations corresponding to $\gamma=2$.

\subsection{The Euler equations with vacuum states as a free-boundary problem}  
We fix a time interval
$0 \le t \le T$.   In the presence of the physical vacuum singularity, solutions to the Euler equations are not  smooth across the front $\Gamma(t)$; thus, rather than studying
weak solutions on $ \mathbb{R}  ^d \times [0,T]$, we instead formulate the Euler equations as a free-boundary problem set on the {\it a priori} unknown domain
$\Omega(t)$.    We write 
the isentropic compressible Euler equations as 
\begin{subequations}
  \label{ceuler3d}
\begin{alignat}{2}
\rho[\p_t u+ u\cdot \nabla u] +\nabla  p(\rho)&=0  &&\text{in} \ \ \Omega(t) \,, \\
 \p_t \rho+ {\operatorname{div}}  (\rho u) &=0 
&&\text{in} \ \ \Omega(t) \,, \\
p &= 0 \ \ &&\text{on} \ \ \Gamma(t) \,, \\
\mathcal{V} (\Gamma(t))& = u \cdot n && \\
(\rho,u,\Omega)   &= (\rho_0,u_0 , \Omega_0) \ \  &&\text{on} \ \{t=0\} \,.
\end{alignat}
\end{subequations}
As noted above, the gas occupies the open, bounded subset
 $\Omega(t) \subset \mathbb{R}^d  $,   $\Gamma(t):= \partial\Omega(t)$ denotes
 the time-dependent vacuum boundary, $ \mathcal{V} (\Gamma(t))$ denotes the normal
 velocity (or speed)  of $\Gamma(t)$, and $n=n(\cdot , t)$ denotes the exterior unit normal vector to $\Gamma(t)$.

\subsection{Physical vacuum boundary condition}  \label{subsec_physicalvacuum}
The rate at which the density $\rho(x,t)$ vanishes  on $\Gamma(t)$ is extremely important.    
With the sound speed given by $c := \sqrt{\p p/ \p \rho}$ and $N$ denoting the {\it outward} unit normal to $\Gamma_0:=\partial \Omega_0$, satisfaction of the condition
\begin{equation}\label{phys_vac}
\frac{ \partial c_0^2}{ \partial N} < 0 \text{ on } \Gamma_0
\end{equation} 
defines a {\it physical vacuum} boundary (see \cite{Lin1987}, \cite{Liu1996}, \cite{LiYa1997}, \cite{LiYa2000},
\cite{LiSm1980}, \cite{XuYa2005}, \cite{LuXiZe2014}, \cite{LuZe2016}), where $c_0 = c|_{t=0}$ denotes the initial sound speed of the gas.   In this case, the sound speed is {\it not} a smooth
function on $ \mathbb{R}  ^d$.

 The physical vacuum condition (\ref{phys_vac}) is equivalent to the
requirement that
\begin{equation}\label{stab1}
\frac{ \p \rho_0^{\gamma-1}} { \partial N} < 0 \text{ on } \Gamma_0 \,,
\end{equation}
a condition necessary for the gas particles on the boundary to accelerate.
Since $ \rho_0 >0$ in $\Omega_0$, (\ref{stab1}) implies that for some positive constant $C$ and $x\in \Omega_0$ near the vacuum boundary $\Gamma_0$,
\begin{equation}\label{degen}
\rho_0^{\gamma-1}(x) \ge  C \dd(x, \Gamma_0) \,,
\end{equation} 
where $\dd$ denotes the distance function, and $\dd(x, \Gamma_0) $ is the distance from the point $x \in \Omega_0$ to the boundary $\Gamma_0$.   As noted
by Serre \cite{Serre2015},  the {\it physical vacuum} condition occurs when the front $\Gamma(t)$ 
 is accelerated by a force resulting from a  H\"{o}lder singularity of the sound speed $c$.  Vacuum states can occur when the sound speed is smooth or singular; 
the physical vacuum requires that $c$ have $ {\frac{1}{2}} $-H\"{o}lder regularity, and we shall be concerned with this singular scenario herein.   See also
\cite{LaMe2017} for the analysis of the shoreline problem using the 1-d shallow water equations.

\subsection{Affine solutions to compressible Euler}\label{sec:affine}
  The set of $d \times d$ matrices is denoted by $ \mathbb{M}  ^d$.
We let $ \GL =\{ \A \in \mathbb{M}  ^d \ : \ \det \A >0\}$.   With $\Omega_0 = B(0,1)=\{ |x| < 1\}$, 
Sideris \cite{Sideris2017} constructed  global-in-time affine solutions to \eqref{ceuler3d} as follows:
\begin{align}
\Omega(t) &= \A(t) B(0,1) \,, \ \ 
u(y,t) = \dot \A(t) \A(t)^{-1}  y \,, \ \ 
\rho(y,t) = \rho_0(| \A(t) ^{-1} y|)/\det A(t) \,. \label{affine_soln}
\end{align} 
where $(\A,\dot\A) \in C^0( [0 , \infty ); \GL)$ satisfies the ordinary differential equation
\begin{equation}\label{agen}
\ddot \A(t) = \det(\A(t))^{1- \gamma} \A(t) ^{-T}   \ \ t>0  \,,
\end{equation} 
with initial conditions at $t=0$ given by $\A(0)$ and $\dot \A(0)$.  
Thus, the vacuum boundary
 $\Gamma(t)$ evolves as a rotating ellipsoid.   We shall restrict our attention to affine motions for which the gas is expanding in all three directions; in 
 particular, we
 shall consider affine solutions $\A(t)$ whose determinant grows at the maximal rate of $(1+t)^3$,
as $t\to\infty$ (which  necessarily holds if $1< \gamma \le {\frac{5}{3}} $).  The next lemma gives conditions under which such solutions exist.  

\begin{lemma}\label{lemma_affine}
Suppose that $1<\gamma\le5/3$.  Let $\A(t)$ be an arbitrary (global) solution of the system \eqref{agen} in 3-d.
Then 
\begin{equation}
\label{detgr}
\det \A(t)\sim (1+t)^{3},\quad t>0.
\end{equation}
and
\begin{equation}
\label{d2}
|\A''(t)|\lesssim (1+t)^{-3\gamma+2}.
\end{equation}
In addition,  there exists a unique positive definite matrix $\A_1$ such that for all $t>0$
\begin{equation}
\label{d1}
|\A'(t)-\A_1|\lesssim (1+t)^{-3\gamma+3}
\end{equation}
and
\[
|\A(t)-\A_1t|\lesssim
\begin{cases}
(1+t)^{-3\gamma+4},&1<\gamma<4/3,\\
\log(2+t),&\gamma=4/3,\\
1,&4/3<\gamma\le5/3.
\end{cases}
\]

Suppose that $5/3<\gamma$.  Choose matrices $\A_1$, $\A_0$, with $\A_1$ positive definite.
Then there exists a unique (global) solution $\A(t)$ 
of the 3-d system \eqref{agen} such that 
 \eqref{detgr}, \eqref{d2},  \eqref{d1} hold, and also
\begin{equation}
\label{d0}
|\A(t)-(\A_1t+\A_0)|\lesssim (1+t)^{-3\gamma+4}.
\end{equation}

\end{lemma}

\begin{proof}
Assume that $1<\gamma\le5/3$.
Let $\A(t)$ be an arbitrary (global) solution of the system \eqref{agen} in 3-d.  
By Theorem 3 (27) of  \cite{Sideris2017}, we have that \eqref{detgr} holds.

By the energy estimate (see Lemma 4 in \cite{Sideris2017}), we have $|\A'(t)|\lesssim1$, and so we obtain that
\begin{equation}
\label{solgr}
|\A(t)|\lesssim 1+t,\quad t>0.
\end{equation}
Using \eqref{detgr} and \eqref{solgr}, we find that
\begin{equation} 
\label{adpbd}
|\A''(t)|= \det \A(t)^{-(\gamma-1)}|\A(t)^{-\top}| \\ = \det \A(t)^{-\gamma}|\cof \A(t)|
\lesssim (1+t)^{-3\gamma +2},
\end{equation} 
which establishes \eqref{d2}.  The right-hand side of \eqref{adpbd} lies in
$L^1[0,\infty)$, and so for any $t>0$, we may write 
\begin{equation}
\label{apid}
\A'(t)=\A'(0)+\int_0^\infty \A''(\tau)d\tau-\int_t^\infty \A''(\tau0)d\tau.
\end{equation}
We define
\begin{equation}
\label{a1def}
\A_1=\A'(0)+\int_0^\infty \A''(\tau)d\tau.
\end{equation}
The equations \eqref{apid} and \eqref{a1def} immediately yield \eqref{d1}.
The uniqueness of $\A_1$ follows from the uniqueness of limits.
The remaining estimate follows by integration of \eqref{d1}.

In the case $5/3<\gamma$,  the statements \eqref{d2}, \eqref{d1}, \eqref{d0} follow directly from Theorem 5 in \cite{Sideris2017}.
The statement \eqref{detgr} follows from \eqref{d0}.
%
\end{proof}

\begin{remark}\label{rem1}
Since $\A_1\in {\rm GL}^+(3,\rr)$, using the singular value decompostion,
there exist $U,V\in{\rm SL}(3,\rr)$ such that $U\A_1V$ is  positive
definite and diagonal.  Note that if $\A(t)$ is a solution of \eqref{agen}, then so too is $U\A(t)V$.  Therefore,
we may assume without loss of generality that $\A_1$ is a diagonal matrix.
 \end{remark}

The simplest affine solution corresponds to the case that
$$\A(t) = \alpha(t) \operatorname{Id} \,,$$
in which case we have an expanding spherical surface, and the scalar $ \alpha (t)$ is the solution of
\begin{equation}\label{affine1d}
\ddot{\alpha}(t) = \alpha ^{-d \gamma +(d-1)}(t) \,,
\end{equation} 
and  by Theorem 5 in \cite{Sideris2017}, 
\begin{align*} 
\alpha (t)  &\sim 1+t \text{ as } t \to +\infty  \,, \ \text{ and } \  \int_0^\infty \frac{1}{\alpha (s)^{1 + \delta }}ds \le C < \infty \ \text{ for }  \delta >0 \\
\dot\alpha (t) &\sim 1    \text{ as } t \to +\infty  \,, \ \text{ and }  \dot \alpha(t) \ge 0 \text{ for all  } t\ge 0 \,.
\end{align*} 

In order to analyze the  stability of the general affine solution,
 $$
\A(t) =   \left[\begin{matrix} a_{11}(t) & a_{12}(t) & a_{13}(t) \\  a_{21}(t) & a_{22}(t) & a_{23}(t) \\ a_{31}(t)& a_{32}(t) &  a_{33}(t)\end{matrix}\right] \,,
 $$
we shall make use of Lemma \ref{lemma_affine}.

\subsection{Main Result}
The precise norms for our analysis shall be defined later in Sections \ref{sec:1d} and \ref{sec:3d}  but we can, nevertheless, state the main result.

\begin{theorem}[Main result]\label{thm1}   For affine solutions \eqref{affine_soln} such that the assumptions of Lemma \ref{lemma_affine} are satisfied, 
and for space dimension $d=1,2,3$  and  all $\gamma>1$,  sufficiently small perturbations of these affine solution  exist globally-in-time. Moreover, the perturbed velocity 
field decays (pointwise) to zero as  time $t \to \infty $ and the vacuum boundary remains close the affine ellipsoid for all time and maintains its regularity.
\end{theorem}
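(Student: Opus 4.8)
The plan is to recast \eqref{ceuler3d} on the fixed reference domain $\Omega_0=B(0,1)$ in Lagrangian coordinates and to treat the gas flow as a perturbation of the affine motion. Let $\eta(\cdot,t)$ be the flow map with $\eta(\cdot,0)=\id$, set $v=\partial_t\eta$ and $J=\det\nabla\eta$; then $\rho=\rho_0 J^{-1}$ and the momentum law becomes the degenerate quasilinear system $\rho_0\,\partial_t v_i+[\cof\nabla\eta]^k_i\,\partial_k(\rho_0^{\gamma}J^{-\gamma})=0$ in $\Omega_0$, with the condition $p=0$ automatically built in since $\rho_0$ vanishes on $\Gamma_0$, and with the physical-vacuum degeneracy $\rho_0^{\gamma-1}\sim\dd(x,\Gamma_0)$ from \eqref{stab1}. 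By the reduction following Lemma \ref{lemma_affine} I take the affine solution to be $\A(t)=\alpha(t)\,\mathrm{diag}(\ss_1,\ss_2,\ss_3)$ with $\alpha(t)\sim 1+t$, $\dot\alpha(t)\sim 1$, and $\int_0^\infty\alpha(s)^{-1-\delta}\,ds<\infty$. Writing $\eta(x,t)=\A(t)x+\dee(x,t)$ (after a further rescaling of the unknown by $\A(t)$ to normalize the leading coefficient) and $\vv=\partial_t\dee$, linearization around the affine flow gives, schematically, a wave equation
$$\rho_0\,\partial_t^2\dee+\tfrac{\dot\alpha}{\alpha}\,\rho_0\,\partial_t\dee+\mathcal L_\alpha\dee=\mathcal N_\alpha(\dee,\nabla\dee),$$
where $\mathcal L_\alpha$ is the time-dependent, degenerate (weight $\rho_0^{\gamma-1}$), coercive linearized pressure operator whose coefficients carry negative powers of $\alpha(t)$, the factor $\tfrac{\dot\alpha}{\alpha}\sim\tfrac1t>0$ is the damping induced by expansion, and $\mathcal N_\alpha$ collects the quadratic and higher self-interactions, again with explicit powers of $\alpha(t)$.

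Next I would construct a high-order energy $E(t)$ adapted to the degeneracy: weighted Sobolev norms built from tangential derivatives $\bar\partial$ along $\Gamma_0$ and weighted normal derivatives, with weight $w\sim\rho_0^{\gamma-1}$ and carefully chosen time-weights in $\alpha(t)$. Commuting $\bar\partial^j$ and weighted vector fields through the equation, pairing with the time derivative of the differentiated displacement, and integrating by parts — the boundary contributions vanishing because $w|_{\Gamma_0}=0$ — yields an identity $\tfrac{d}{dt}E(t)+D(t)\le(\text{commutator terms})+(\text{nonlinear errors})$, with $E$ coercive (using the sign in \eqref{stab1}) and $D(t)\ge0$ the dissipation produced by the expansion damping.

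The crux is the control of the commutators. In \cite{HaJa2016} the commutator of $\bar\partial^j$ with the $\alpha$-weighted structure produced, once $\gamma>5/3$, an ``anti-damping'' term with the unfavorable sign that could not be absorbed into $D(t)$; this is precisely the $\gamma=5/3$ threshold. The new strategy is to arrange the $\alpha$-powers in $E(t)$ so that this offending term comes multiplied by $\alpha(t)^{-1-\delta}$ for some $\delta>0$. Since $\alpha(t)\sim 1+t$ and $\int_0^\infty\alpha(s)^{-1-\delta}\,ds<\infty$ by the ODE asymptotics recorded after \eqref{affine1d}, the term is now \emph{time-integrable} rather than in need of a favorable sign, so it contributes only a bounded multiple of $\sup_{[0,t]}E$ and a Gr\"onwall/continuity argument closes for every $\gamma>1$, with no upper bound. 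I expect this step — pinning down the correct $\alpha$-weighted energy and verifying that every commutator and every nonlinear term is either dissipative or carries an integrable-in-time factor of $\alpha$ — to be the main obstacle and the technical heart of the argument.

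Finally I would pass from $E(t)$ to control of the full weighted Sobolev norm of $(\dee,\vv)$: in $d=1$ this is immediate, while in $d=2,3$ the normal derivatives not directly seen by $E$ are recovered from the transport equation satisfied by the vorticity $\cof\nabla\eta:\nabla v$ together with weighted div--curl (Hodge) estimates. A standard continuity argument then finishes: on any $[0,T]$ on which $E\le\varepsilon$ one proves $\sup_{[0,T]}E\lesssim E(0)+\varepsilon^{3/2}$, so for $E(0)$ small enough the a priori bound strictly improves, forcing $T=\infty$. From the uniform bound and $\alpha(t)\sim 1+t$ one reads off that the velocity $u(y,t)=\dot\A\A^{-1}y+(\text{perturbation})$ decays pointwise to zero, and that $\eta(\cdot,t)$ stays $\varepsilon$-close to $x\mapsto\A(t)x$, hence remains a diffeomorphism of the same regularity, so that $\Gamma(t)=\eta(\Gamma_0,t)$ stays near the affine ellipsoid and retains its regularity — which is the content of Theorem \ref{thm1}.
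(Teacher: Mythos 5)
Your sketch follows essentially the same route as the paper: Lagrangian formulation with a multiplicative perturbation $\xi=\alpha(t)\eta$, high-order degenerate $\alpha$-weighted energies built from tangential derivatives and weighted normal derivatives, curl estimates to recover the normal components in $d=2,3$, smoothed initial data, and a continuation argument; and you correctly identify the heart of the new argument as a rebalancing of the $\alpha$-powers so as to exploit $\int_0^\infty\alpha(s)^{-1-\delta}\,ds<\infty$. One point to get right, since it is precisely where \cite{HaJa2016} got stuck: the anti-damping is not a commutator effect. It appears already at zeroth order when one pairs $\alpha^{\gamma+1}\partial_t v + 2\alpha^{\gamma}\dot\alpha\,v$ with $v$ and writes
$$
\frac{\alpha^{\gamma+1}}{2}\frac{d}{dt}\|v\|_0^2 + 2\alpha^{\gamma}\dot\alpha\|v\|_0^2
= \frac{1}{2}\frac{d}{dt}\bigl(\alpha^{\gamma+1}\|v\|_0^2\bigr)
+ \Bigl(2-\frac{\gamma+1}{2}\Bigr)\alpha^{\gamma}\dot\alpha\|v\|_0^2 \,,
$$
which has the wrong sign once $\gamma>3$ in $1$d (resp.\ $\gamma>5/3$ in $3$d). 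Relative to the energy $\alpha^{\gamma+1}\|v\|_0^2$ this residual is of size $\dot\alpha/\alpha\sim 1/t$, which is \emph{not} time-integrable; so the plan of making this offending term carry an $\alpha^{-1-\delta}$ factor merely by choosing the time-weight on $v$ cannot succeed as stated. The paper's move is instead to divide the momentum equation by $\alpha^{\gamma-3}$ (resp.\ $\alpha^{3\gamma-5}$), reducing it to $\alpha^4\partial_t v+2\alpha^3\dot\alpha v+\cdots=0$, which makes the residual coefficient \emph{vanish identically}; the time-integrability of $\alpha^{-1-\delta}$ is then what controls the potential, pressure, and nonlinear terms, which after the division carry the decaying factor $\alpha^{3-\gamma}$ (resp.\ $\alpha^{5-3\gamma}$). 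That is the precise mechanism that removes the $\gamma=5/3$ threshold, and with it your outline matches the paper's proof.
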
 

\subsection{Outline} Section \ref{sec:1d} is intended as an introduction to the analysis of the stability of affine flows, and we restrict our
analysis to the relatively simple study of 1d fluid motion, wherein some of the ideas that allow  for all $\gamma >1$ are explained. 

In Section \ref{sec:3d}, we treat the three-dimensional stability problem.   Our primary goal is to explain the stability analysis for $\gamma > {\frac{5}{3}} $. We 
first focus on the (shallow water) case that $\gamma=2$, and then explain the treatment for general $\gamma$.

\section{Notaton, weighted embeddings, and Hardy inequalities}\label{sec:frac}
\subsection{Notation}  For $1\le p\le \infty $,  write $\|F\|_{L^p}$ for the Lebesgue space $L^p(\Omega_0)$ norm, and for $s \ge 0$, 
 $\|F\|_s$ for the  $H^s(\Omega_0)$ Sobolev norm.   The function $x\mapsto \P(x)$ will be used to denote
$C\, x^q$ for some $q >1$, and a generic constant $C$.   We write $ f \lesssim g$ if $f \le C\, g$ for  a generic constant $C$.

\subsection{Inequalities with weights and fractional norms}
Using $\d$ to denote the distance function to
the boundary $\Gamma$,  for $s \ge 0$, we consider the
weighted Sobolev space  $H^k(\Omega_0,\d, s )$, with norm given by 
$$\| F\|^2 _{H^k(\Omega_0,\d,s )} =  \int_{\Omega_0}  \d^ s \sum_{ | \alpha | \le k} |  \nabla^\alpha  F (x)|^2 \,.$$
If  $s > 2(k-r) -1$, then  there is a constant $C>0$ depending only on $\Omega_0$, $k$,$r$, and $ s$ such that
\begin{equation}\label{w-embed}
\| F\|^2 _{H^r(\Omega_0,\d, s -2(k-r) )}  \le C \| F\|^2 _{H^k(\Omega_0,\d, s )}  \,.
\end{equation} 
See Section 8.8 in  \cite{Kufner1985} together with the interpolation theory in Chapter 5 of \cite{KuPe2003}.

We shall also make use of the following higher-order Hardy inequality (see \cite{CoSh2011,CoSh2012} for the proof):
\begin{lemma}[Hardy's inequality in higher-order form]\label{hardy}   
If $u\in H^k(  \Omega_0)\cap H^1_0(\Omega_0) $ for $k\ge 1$, and $\d(x) >0$ for $x \in \Omega_0$,  $\d\in H^{r}( \Omega _0)$, $r= \max(k-1,3)$, 
and $\d$ is the distance function to $\Gamma_0 $ near $\Gamma _0$,  then  $\frac{u}{\d}\in H^{k-1}( \Omega_0 )$ and 
\begin{equation}
\label{Hardy}
\left\|\frac{u}{\d}\right\|_{k-1}\le C \|u\|_k \,.
\end{equation}
\end{lemma}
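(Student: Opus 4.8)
The final statement is Hardy's inequality in higher-order form (Lemma \ref{hardy}). The plan is to prove it by induction on $k$, reducing to the classical (first-order) Hardy inequality as the base case and using the flattening-of-the-boundary localization that is standard in this subject.

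\medskip

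\textbf{Base case $k=1$.} Here I would invoke the classical Hardy inequality: if $u \in H^1_0(\Omega_0)$ then $u/\d \in L^2(\Omega_0)$ with $\|u/\d\|_{L^2} \le C\|\nabla u\|_{L^2} \le C\|u\|_1$. The only subtlety is that $\d$ is the true distance function only \emph{near} $\Gamma_0$; away from the boundary $\d$ is bounded below by a positive constant, so the estimate there is trivial, and one glues the two regions with a partition of unity $\{\chi_{\rm near}, \chi_{\rm far}\}$ subordinate to a collar neighborhood of $\Gamma_0$. On the near piece, one uses a boundary chart that straightens $\Gamma_0$ so that $\d$ becomes (comparable to) the last coordinate $x_d$, applies the one-dimensional Hardy inequality $\int_0^\infty |f(t)/t|^2\,dt \le 4\int_0^\infty |f'(t)|^2\,dt$ in the $x_d$-direction (valid since $f(0)=0$ because $u\in H^1_0$), and integrates in the tangential variables. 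The hypothesis $\d \in H^r(\Omega_0)$ with $r = \max(k-1,3) \ge 3$ guarantees $\d$ is regular enough ($C^1$, even $C^{1,1/2}$ by Sobolev embedding in $d\le 3$) that these charts and the comparison $\d \simeq x_d$ cause no loss.

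\medskip

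\textbf{Inductive step.} Assume the inequality holds for $k-1$; prove it for $k \ge 2$ assuming $u \in H^k(\Omega_0) \cap H^1_0(\Omega_0)$ and $\d \in H^{\max(k-1,3)}(\Omega_0)$. One must bound $\|u/\d\|_{k-1}$, i.e.\ control $\nabla^\alpha(u/\d)$ in $L^2$ for $|\alpha| \le k-1$. By the Leibniz rule,
\begin{equation*}
\nabla^\alpha\Big(\frac{u}{\d}\Big) = \frac{\nabla^\alpha u}{\d} + \sum_{0 < \beta \le \alpha} c_{\alpha\beta}\, \nabla^{\alpha-\beta} u \;\nabla^\beta\Big(\frac{1}{\d}\Big),
\end{equation*}
and since $\nabla^\beta(1/\d)$ is a sum of terms of the form $\d^{-1-j}\prod \nabla^{\gamma_i}\d$ with $\sum|\gamma_i| = |\beta|$ and $1 \le j \le |\beta|$, each summand with $|\beta|\ge 1$ can be written as $\frac{1}{\d}$ times a product of lower-order derivatives of $u$ and of $\d/\d = $ (bounded factors) — the key algebraic point being that one power of $\d$ in the denominator always survives to pair with $u$ (or a derivative of $u$ that still vanishes on $\Gamma_0$), while the remaining $\d^{-j}$ with $j\ge 1$ cancels against $j$ factors $\nabla^{\gamma_i}\d$ that are each $O(1)$. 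The terms $\nabla^\alpha u/\d$ with $|\alpha|=k-1$ are handled by applying the $k=1$ base case to the function $\nabla^\alpha u \in H^1_0$? — this is false in general since $\nabla^\alpha u$ need not vanish on $\Gamma_0$. So instead one peels one derivative differently: writing $u/\d = (u/\d)$ and noting $\nabla(u/\d) = \nabla u/\d - (u/\d)\nabla\d/\d$, one applies the inductive hypothesis to the \emph{first-order} quantity and feeds back. The cleanest route: bound $\|u/\d\|_{k-1}^2 \lesssim \|u/\d\|_{L^2}^2 + \|\nabla(u/\d)\|_{k-2}^2$, expand $\nabla(u/\d)$ as above, and observe every resulting term is either (i) $\nabla u/\d$ with $\nabla u \in H^{k-1}$ — but again not in $H^1_0$ — or (ii) $(u/\d)(\nabla\d/\d)$ with $u/\d \in H^{k-1}$ by induction and $\nabla\d/\d \in$ a fixed multiplier space near the boundary. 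The honest resolution of (i) is to go back to the localized boundary chart where $\d \simeq x_d$: in that chart $\|u/\d\|_{k-1}$ near $\Gamma_0$ is comparable to $\|u/x_d\|_{k-1}$, and one uses the weighted-embedding inequality \eqref{w-embed} together with the one-dimensional weighted Hardy inequality applied componentwise to each tangential-derivative slice of $u$ (tangential derivatives preserve the $H^1_0$ condition; only the normal derivative $\partial_{x_d}$ breaks it, and one never needs to divide $\partial_{x_d}^{k} u$ by $x_d$ since at most $k-1$ derivatives act on $u/\d$).

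\medskip

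\textbf{Main obstacle.} The genuine difficulty is bookkeeping the derivative count in the boundary chart so that one only ever divides by $\d$ a function that vanishes on $\Gamma_0$ in the appropriate sense — concretely, showing that after the Leibniz expansion every term is of the form $\d^{-1} \cdot (\text{$H^1_0$ function, or tangential derivatives thereof}) \cdot (\text{bounded multiplier built from $\nabla\d$})$, and that the multipliers lie in a space (e.g.\ $H^{k-1}\cap L^\infty$ near $\Gamma_0$, using $\d\in H^{\max(k-1,3)}$) acting boundedly on $H^{k-1}$. The regularity threshold $r=\max(k-1,3)$ is exactly what is needed: for $k\le 4$ it is the fixed value $3$ ensuring $\nabla\d \in H^2 \hookrightarrow L^\infty$ in $d\le 3$, and for $k\ge 5$ it scales with $k$ so the multiplier products stay in $H^{k-1}$. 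Once the localization, the Leibniz algebra, and the one-dimensional weighted Hardy estimate $\int_0^\infty t^{s}|f/t|^2 \lesssim \int_0^\infty t^{s}|f'|^2$ (for $s>-1$, $f(0)=0$) are in place, assembling them through the partition of unity and summing over the finitely many charts completes the induction. I would then refer to \cite{CoSh2011,CoSh2012} for the full details of this by-now-standard argument.
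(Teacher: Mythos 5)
The paper does not actually prove this lemma; it cites \cite{CoSh2011,CoSh2012}, so there is no in-text argument to compare against. Judged on its own merits, your proposal correctly identifies the localization-and-flattening strategy and correctly handles the tangential-derivative terms, but it has a genuine gap in the normal-derivative case. In a boundary chart with $\d\simeq x_d$, bounding $\|u/x_d\|_{k-1}$ requires estimating $\partial_{x_d}^{j}\partial'^{\alpha}(u/x_d)$ for \emph{all} $j+|\alpha|\le k-1$, not just the pure tangential slices. For $j\ge 1$ the Leibniz expansion produces terms of the form $\partial_{x_d}^{m}u/x_d^{\,j-m+1}$ with $m\le j$, i.e.\ negative powers of $x_d$ up to $x_d^{-(j+1)}$ applied to functions $\partial_{x_d}^{m}u$ that for $m\ge 1$ do \emph{not} vanish on $\{x_d=0\}$; none of these individual terms is controlled by the base-case Hardy inequality, and the one-dimensional weighted Hardy inequality you quote again presupposes $f(0)=0$. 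Your parenthetical remark that ``one never needs to divide $\partial_{x_d}^{k}u$ by $x_d$'' diagnoses the wrong difficulty: what is dangerous is not $\partial_{x_d}^{k}u/x_d$ but rather the singular cocktail $\sum_m \partial_{x_d}^{m}u/x_d^{\,j-m+1}$.

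The missing ingredient is the integral representation that makes the cancellation in that cocktail visible: since $u\in H^1_0$, in the chart $u(x',x_d)=\int_0^{x_d}\partial_{x_d}u(x',s)\,ds$, hence
$$\frac{u}{x_d}=\int_0^1 \partial_{x_d}u(x',t\,x_d)\,dt\,, \qquad
\partial_{x_d}^{j}\partial'^{\alpha}\Bigl(\frac{u}{x_d}\Bigr)=\int_0^1 t^{j}\,\bigl(\partial_{x_d}^{j+1}\partial'^{\alpha}u\bigr)(x',t\,x_d)\,dt\,,$$
and Minkowski's integral inequality then gives $\bigl\|\partial_{x_d}^{j}\partial'^{\alpha}(u/x_d)\bigr\|_{L^2}\le (j+\tfrac12)^{-1}\|\partial_{x_d}^{j+1}\partial'^{\alpha}u\|_{L^2}\le C\|u\|_k$ uniformly over $j+|\alpha|\le k-1$. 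This single formula handles tangential, mixed, and pure normal derivatives at once without splitting by Leibniz, and it — not a naive term-by-term application of weighted Hardy — is the heart of the argument in \cite{CoSh2011,CoSh2012}. The remaining modifications (comparing $\d$ with $x_d$, the partition of unity, and the regularity threshold $r=\max(k-1,3)$ so that the chart multipliers land in an algebra acting on $H^{k-1}$) are essentially as you describe.
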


We let $  \mathbb{R}  ^d_+ = \{ x \in \mathbb{R}  ^d \ : \ x_d > 0\}$, and for $ s \in \mathbb{R}  $, $[s]$ denotes the greatest integer less than or equal to $s$.
For $s>0$ and $s\notin \mathbb{N}  $, an equivalent norm on $H^s(\mathbb{R}  ^d_+)$ is given by
\begin{equation}\label{hsnorm}
\|u\|^2_{H^s( \mathbb{R}  ^d_+)} := \|u\|^2_{H^{[s]}(\mathbb{R}  ^d_+)} + \sum_{|\alpha| = [s]} \iint_{\mathbb{R}  ^d_+\times \mathbb{R}  ^d_+} \frac{|\nabla ^\alpha u(x) - \nabla ^\alpha u(y)|^2}{|x-y|^{d+2(s-[s])}}\hspace{1pt} \,dx dy \,.
\end{equation} 
For functions $u \in H^s( \mathbb{R}  ^d)$ one replaces the integrals over $ \mathbb{R}  ^d_+$ with integrals over $\mathbb{R}  ^d$.

For any smooth bounded domain $\Omega_0$, there exists a finite cover by open sets $\{ \mathcal{U} _i\}_{i=1}^K$ and a partition-of-unity $\{\zeta_i\}_{i=1}^K$ subordinate to this
open cover.  Each set $ \mathcal{U} _i$ which covers $\Gamma_0$ is diffeomorphic to the upper half of the unit ball $B^+$ in $ \mathbb{R}  ^3$ via the
chart $\theta_i: B^+ \to \mathcal{U} _i$, while interior sets $ \mathcal{U} _i$ are diffeomorphic to the unit ball $B$ via $\theta_i: B \to \mathcal{U} _i$.  Then for
$ u \in H^s(\Omega_0)$,
$\|u\|_s^2 = \sum_{i=1}^K \| \zeta_i\, u \circ \theta_i \|^2_{H^s( \mathbb{R}  ^d_+)}$,
where the $H^s( \mathbb{R}  ^d_+)$-norm is replaced by the $H^s( \mathbb{R}  ^d)$-norm if $ \mathcal{U} _i$ is an interior set.

\section{Global existence for 1-d compressible Euler equations}\label{sec:1d}
We include the analysis of flows in one space dimension as a {\it User's Guide} for the multi-dimensional setting to follow.     In one space dimension, the case
that $ 1< \gamma \le 3$ does not produce an ``anti-damping'' term in the sense of \cite{HaJa2016} and is thus in the same analysis regime as the three-dimensional
case of $ 1 < \gamma \le {\frac{5}{3}} $.   We shall explain how to treat all $\gamma > 1$.
\subsection{Eulerian free-boundary formulation}
In one space dimension, the isentropic compressible Euler equations \eqref{ceuler3d} can is written as
\begin{subequations}
  \label{ceuler}
\begin{alignat}{2}
\rho ( \p_t u + u \p_x u) + \p_x  p(\rho) &=0  &&\text{in} \ \ \Omega  (t) \,, \\
\p_t \rho+ \p_x (\rho u)&=0 
&&\text{in} \ \Omega (t) \,, \\
p &= 0 \ \ &&\text{on} \ \ \Gamma(t) \,,  \\
\mathcal{V} (\Gamma(t))& = u  && \\
(\rho,u,\Omega)   &= (\rho_0,u_0, \Omega_0 ) \ \  &&\text{on} \ \{t=0\}\,.
\end{alignat}
\end{subequations}
For our 1d analysis,
we shall use both $\p_x u$ and $u_x$ to denote the $x-$derivative of a function $u$.
\subsection{Lagrangian formulation}
\subsubsection{The Euler equations}
 We transform the system (\ref{ceuler}) into
Lagrangian variables.
We let $\xi(x,t)$ denote the ``position'' of the gas particle $x$ at time $t$.  Thus,
\begin{equation}
\nonumber
\begin{array}{c}
\partial_t \xi = u \circ \xi $ for $ t>0 $ and $
\xi(x,0)=x
\end{array}
\end{equation}
where $\circ $ denotes composition so that
$[u \circ \xi] (x,t):= u(\xi(x,t),t)\,.$  

%
%
As is shown in \cite{CoSh2011}, 
$\rho \circ \xi = \rho_0 \, \xi_x ^{-1}$; hence,
 the initial density function $\rho_0$ can be viewed as a parameter, and the compressible Euler equations can be written as a degenerate second-order
 wave equation for $\xi$:
\begin{subequations}
\label{ce0}
\begin{alignat}{2}
\rho_0 \p^2_t \xi +  \p_x (\rho_0^ \gamma \, \xi_x^{-\gamma}) &=0 \ \ && \text{ in } \Omega_0   \times (0,T] \,, \\
(\xi, \p_t\xi)  &=( \xi _0, \xi_1) \ \  \ \ && \text{ in } \Omega_0   \times \{t=0\} \,,  
\end{alignat}
\end{subequations}
with $\xi_0$ and $\xi_1$ denoting, respectively, the initial position and velocity on $\Omega_0$.   The initial density function must satisfy
 $ \rho _0^{ \gamma -1}(x) \ge C \dd( x, \Gamma_0 ) $ for $x \in \Omega_0$ near $\Gamma_0$.    We  let $\d(x)$ denote a particular
 distance function  to be defined below (and associated with our choice of affine solutions), and from  \eqref{degen}, 
 we set
$$
\rho_0(x) =\d^{\frac{1}{\gamma-1}} \,,
$$
so that \eqref{ce0} becomes
$$
\d^{\frac{1}{\gamma-1}}  \p^2_t \xi +  \p_x  \left(\d^{\frac{\gamma}{\gamma-1}} \xi_x^{-\gamma}\right) =0 \ \  \text{ in } \Omega_0  \times (0,T] \,,
$$
which is equivalent to
\begin{equation}\label{ce}
\p^2_t \xi + \frac{\gamma}{\gamma -1} \dx \zxg  -\gamma \d \zxgo \p_x^2\xi =0  \,.
\end{equation}

\subsubsection{Perturbations of affine solutions}   With the affine solution given by $\A(t)x=\alpha(t) x$, we shall denote by $\eta(x,t)$ the perturbation to the
affine flow, and consider
solutions to \eqref{ce0} of the form
\begin{equation}
\xi( x,t) = \alpha(t) \eta(x,t) \,. \nonumber
\end{equation} 
We define the velocity $v$ associated  to the perturbation $\eta$ by
$v(x,t) = \p_t \eta(x,t)$.
It follows that
$$
\p_t \xi(x, t) = \dot \alpha(t) \eta(x,t) + \alpha(t) \p_t \eta(x,t) \,.
$$
For simplicity, let us suppose
 that  at time $t=0$, $\eta(x,0) =e(x)$,  the identity map on $\Omega_0$, and that $\p_t\eta(x,0) = u_0(x)$; furthermore, we let $\alpha(0)=1$.  Then,
$$
\p_t \xi(x, 0) = \dot \alpha(0) x + u_0(x)\,.
$$
In order to analyze the stability of the affine solution, we shall assume that $u_0(x)$ is a small perturbation of the initial affine velocity $ \dot \alpha(0) x$.

Using the affine solution  \eqref{affine1d}, we see that the pair $(\eta,v)$ satisfies the following degenerate  and nonlinear wave equation:
\begin{subequations}
\label{ceuler0}
\begin{alignat}{2}
\p_t \eta &=v \ \ && \text{ in } \Omega_0   \times (0,T] \,, \\
\alpha^\gpo \p_t v + 2\alpha ^\gamma \dot\alpha v + \eta + \frac{\gamma}{\gamma-1} \dx \nxg  -\gamma \d \nxgo  \p^2_x \eta&=0 \ \ && \text{ in } \Omega_0   \times (0,T] \,, \\
(\eta, v)  &=( e, u_0) \ \  \ \ && \text{ in } \Omega_0   \times \{t=0\} \,,  
\end{alignat}
\end{subequations}
where $e(x)=x$ denotes the identity map, and 
$$\Omega_0 = (-1,1) \  \text{ and } \  \d(x)= {\frac{\gamma -1}{2\gamma}} (1-x^2) \ \text{ on } \overline \Omega_0 \,. $$
Note that the physical vacuum condition  \eqref{degen} is satisfied as  $|\d_x| = {\frac{\gamma-1}{\gamma}} $ on $\Gamma_0 = \{ -1, 1\}$ and that 
\begin{equation}\label{dxx}
|\d_x| \le {\frac{\gamma-1}{\gamma}} \ \text{ and } \ 
\d_{xx} = -{\frac{\gamma-1}{\gamma}}  \  \text{ on } \overline \Omega_0 \,.
\end{equation} 

For our analysis, we shall assume that
\begin{equation}\label{eta_bound}
\|  \eta_x(\cdot ,t) -1 \|_{L^\infty }  \le {\frac{1}{10}}   \ \text{ for all } t \ge 0 \,,
\end{equation} 
and prove later that  $\|  \eta_x(\cdot ,t) -1 \|_{L^\infty }  \le \epsilon  $ for all $t \ge 0$ and for $ 0 < \epsilon \ll {\frac{1}{10}} $.  The bound \eqref{eta_bound} 
implies that
\begin{equation}\label{ss0}
1 - {\frac{1}{10}} \le \eta_x (x, t) \le 1 + {\frac{1}{10}} \,,
\end{equation} 
and hence that
\begin{equation}\label{goodbound}
\left[1 + {\frac{1}{10}}\right]^{-1}  \le \eta_x^{-1}  (x, t) \le \left[1 - {\frac{1}{10}}\right]^{-1}  \,.
\end{equation}

\subsection{Smoothing the initial data}\label{subsec::initdata} To obtain energy estimates, we will work with a smooth sequence of solutions, obtained from smoothing
the initial data.

For 
$\kappa>0$, 
let $0 \le \varrho_ \kappa \in C^ \infty _0( \mathbb{R}^d )$ denote the standard family of mollifiers with $\spt (\varrho_ \kappa )
\subset \overline{ B(0, \kappa)}$,  and   let $\mathcal{E}_{\Omega_0}  $ denote a
Sobolev extension operator mapping $ H^s(\Omega_0)$ to $H^s( \mathbb{R}^d  )$ for $ s\ge 0$. 

With $0<\kappa_0\ll 1$ chosen small, 
for $\kappa\in (0,\kappa_0)$, we set
\begin{equation} 
u_0^ \kappa = \varrho_{ \frac{1}{|\text{ln} \kappa|}} *\mathcal{E}_ {\Omega_0} (u_0)\,,   \label{u0kappa}
\end{equation} 
so that for any fixed $\kappa \in (0, \kappa_0 )$, $u_0^ \kappa \in C^ \infty (\overline 
\Omega )$.  More importantly, due to the standard properties of  convolution, we also have the following estimates:
\begin{equation}
\nonumber
\forall s\ge 1\,,\forall\kappa\in(0,\kappa_0)\,, \ \ \ \|u_0^\kappa\|_s\le C_s |\text{ln}\kappa|^s \|u_0\|_0\,.
\end{equation}

\subsubsection{Global existence for $ \gamma=2$}
Because of the particular interest in the shallow water equations (see \cite{LaMe2017}) and the fact that $\gamma < 3$ does not produce a so-called
``anti-damping'' term, 
we shall first explain the proof of global existence in the case that $\gamma=2$; the proof for  general $\gamma >1$ will be given below.
The momentum equation (\ref{ceuler0}b) takes the form 
\begin{equation}\label{euler-gamma2}
\alpha^3 \p_t v + 2\alpha ^2 \dot\alpha v + \eta +  2\dx \eta_x^{-2} - 2 \d \eta_x^{-3}   \p^2_x \eta=0 \ \ \text{ in } \Omega_0   \times (0,T] \,.
\end{equation} 
We define the near-identity map
$$
\deta(x,t) = \eta(x,t) -x \,.
$$
\begin{definition}[Norm for $\gamma=2$ in 1-d]
We use the following higher-order energy function for case that $\gamma=2$:
\begin{align*}
e_2(t) = \| \d^ {\frac{7}{2}} \p_x^6 \deta( \cdot , t)\|_0^2 + \sum_{b=0}^5 \left(
 \| \alpha(t) ^ {\frac{3}{2}}  \d^{\frac{1+b}{2}}  \p_x^b v(\cdot , t)\|_0^2 +  \|  \d^{\frac{1+b}{2}}  \p_x^b \deta(\cdot , t)\|_0^2
\right) \,.
 \end{align*} 
The norm is given by
\begin{equation}\nonumber
E_2(t) =
e_2(t) +  \| \alpha^ {\frac{3}{2}}(t)  v( \cdot ,t) \|_{W^ {1,\infty} }^2  + \|   \deta( \cdot ,t) \|_{W^ {1,\infty} }^2
+  \sum_{b=0}^3 \left( \| \alpha(t) ^ {\frac{3}{2}} \d^b  v( \cdot ,t) \|_{2+b}^2  + \|\d^b  \deta( \cdot ,t) \|_{2+b}^2\right) \,.
\end{equation} 
\end{definition} 
\noindent
We remark that it suffices to use $e_2(t)$ for the analysis, but including all of the terms in $E_2(t)$ allows for a somewhat more transparent approach to controlling
the error terms in energy estimates.

From  \eqref{goodbound}, we have that
\begin{equation}   \label{g2bound}
\frac{1}{2} \le \eta_x ^{-3} (x,t)  \ \text{ and } \ \eta_x ^{-4} (x,t)  \le 2 \ \text{ for } t \ge 0 \,,
\end{equation} 
and we shall make use of these bounds in energy estimates.

\begin{theorem}  For $\gamma=2$ and  $ \epsilon >0$ taken sufficiently small, if the initial data satisfies
 $e_2(0) <  \epsilon $, then there exists a global unique solution of the 1-d Euler equations \eqref{ceuler},  such that $E_2(t) \le \epsilon $ for all $t \ge 0$.
 Furthermore, $\xi( x,t) = \alpha(t) \eta(x,t) $ is a global solution to the 1-d Euler equations \eqref{ceuler}.
\end{theorem}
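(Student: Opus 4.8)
The plan is to prove the theorem by a continuity/bootstrap argument on the energy function $e_2(t)$, combined with the local existence theory for the degenerate wave equation \eqref{ceuler0} applied to the $\kappa$-smoothed initial data $u_0^\kappa$. First I would fix the smoothed data from Section \ref{subsec::initdata}, obtain a local-in-time smooth solution $(\eta^\kappa, v^\kappa)$ on some interval $[0, T^\kappa]$, and establish the \emph{a priori} estimate that if $e_2(t) \le \epsilon$ (equivalently, if the pointwise bound \eqref{eta_bound} and its consequences \eqref{ss0}--\eqref{goodbound}, \eqref{g2bound} hold) on $[0,T]$, then in fact $e_2(t) \le \tfrac12 e_2(0) + C\,\epsilon^{3/2}$ on $[0,T]$, so that taking $\epsilon$ small closes the bootstrap and gives a solution uniform in $\kappa$; passing to the limit $\kappa \to 0$ then yields the desired global solution of \eqref{ceuler0}, and unwinding the change of variables $\xi = \alpha\eta$ recovers the solution of \eqref{ceuler}.

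The heart of the matter is the differentiated energy estimate. For each $b = 0, \dots, 5$ I would apply $\p_x^b$ to the momentum equation \eqref{euler-gamma2}, multiply by the appropriately weighted test function (essentially $\d^{1+b}\,\p_x^b v$), and integrate over $\Omega_0$. The weight $\d^{1+b}$ is chosen precisely so that the most singular term $-2\d\,\eta_x^{-3}\p_x^2\eta$, after $b$ derivatives and integration by parts, produces a positive-definite contribution to the time derivative of $\|\d^{(1+b)/2}\p_x^{b}\deta\|_0^2$ up to commutator errors — this is the standard structure from \cite{CoSh2011,CoSh2012,HaJa2016}. The damping term $2\alpha^2\dot\alpha\, v$ contributes $+4\alpha^2\dot\alpha\,\|\d^{(1+b)/2}\p_x^b v\|_0^2 \ge 0$ using $\dot\alpha \ge 0$, and the coupling term $\eta = \deta + x$ pairs with $v = \p_t\deta$ to feed the $\|\d^{(1+b)/2}\p_x^b\deta\|_0^2$ piece of $e_2$. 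Crucially, since $\gamma = 2 < 3$, no ``anti-damping'' term appears (this is the point emphasized in the text), so all lower-order contributions have favorable sign or are genuine error terms of size $\epsilon^{3/2}$. The top-order term $\|\d^{7/2}\p_x^6\deta\|_0^2$ is handled separately: one more derivative on the momentum equation, tested against $\d^7\p_x^6 v$, with the $\alpha^3\p_t v$ term supplying the time derivative after dividing through — here the decay $\alpha(t) \sim 1+t$ and $\int_0^\infty \alpha(s)^{-1-\delta}\,ds < \infty$ from Section \ref{sec:affine} is what makes the time-integrated errors summable. Elliptic-type estimates via the weighted embedding \eqref{w-embed} and Hardy's inequality (Lemma \ref{hardy}) then upgrade the weighted $L^2$ control into the full norm $E_2(t)$, in particular recovering the $W^{1,\infty}$ bounds needed to justify \eqref{eta_bound} with $\epsilon$ in place of $\tfrac1{10}$.

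I expect the main obstacle to be the careful bookkeeping of the commutator and error terms generated when $\p_x^b$ passes through the nonlinear coefficients $\eta_x^{-2}$, $\eta_x^{-3}$ and the weight $\d$, showing each such term is either of the form $\P(E_2(t))$ (hence $\lesssim \epsilon^{3/2}$ under the bootstrap hypothesis) or carries a favorable power of $\alpha(t)^{-1}$ that makes it time-integrable. A secondary technical point is controlling terms where derivatives land disproportionately on $v$ rather than $\deta$: near the top order these must be absorbed using the $\alpha^{3/2}$-weighted velocity norms in $e_2$ together with interpolation, since no extra spatial regularity is available. The weights $\d^{(1+b)/2}$ and the $\alpha^{3/2}$ prefactors in the definition of $e_2(t)$ are engineered exactly to make this balance work, and verifying that the engineering succeeds — i.e. that the energy identity genuinely closes with the stated powers — is the crux of the argument. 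Once that is in place, the continuation criterion for the smooth $\kappa$-solutions plus the uniform bound give global existence, and uniqueness follows from a standard energy estimate on the difference of two solutions in a lower-order norm.
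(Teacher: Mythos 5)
Your overall strategy---smooth the data with $u_0^\kappa$, close a bootstrap on $e_2$ through energy estimates for the $\p_x^b$-differentiated momentum equation tested against $\d^{b+1}\p_x^b v$, use $\gamma=2<3$ to avoid anti-damping, upgrade to $E_2$ via the weighted embedding \eqref{w-embed} and Sobolev, then continue and pass $\kappa\to 0$---is exactly the paper's. But there is a concrete error in the one place you depart from it: your treatment of the top-order piece $\|\d^{7/2}\p_x^6\deta\|_0^2$.

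You propose a separate $\p_x^6$-differentiated energy identity tested against $\d^7\p_x^6 v$. That cannot close. Such an identity would produce a time derivative of $\|\alpha^{3/2}\d^{7/2}\p_x^6 v\|_0^2$ and, after integrating the elliptic part by parts, a new top term of the form $\|\d^4\p_x^7\deta\|_0^2$. Neither of these quantities is contained in $e_2(t)$; putting them into the norm pushes the cascade one level higher with nothing gained. The paper obtains $\|\d^{7/2}\p_x^6\deta\|_0^2$ entirely from the $\p_x^5$ estimate: after integrating against $\d^6\p_x^5 v$, the elliptic contribution appears as the quadratic form
\[
\int_{\Omega_0}\bigl(6\d_x\d^{2.5}\p_x^5\eta + \d^{3.5}\p_x^6\eta\bigr)^2\,dx,
\]
and integration by parts of the cross term, using $\d_{xx}=-\tfrac{\gamma-1}{\gamma}=-\tfrac12$ from \eqref{dxx}, rewrites this as
\[
\int_{\Omega_0}\bigl(3\d^6|\p_x^5\eta|^2 + \d^7|\p_x^6\eta|^2\bigr)\,dx,
\]
a sum of two nonnegative pieces. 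The second of these \emph{is} $\|\d^{7/2}\p_x^6\deta\|_0^2$. This is precisely why $e_2$ contains $\d^{7/2}\p_x^6\deta$ but no companion $\alpha^{3/2}\d^{7/2}\p_x^6 v$: the sixth derivative of $\deta$ is a by-product of the fifth-order estimate, not the object of a sixth-order one. You should drop the proposed $\p_x^6$ problem and instead verify this integration-by-parts identity; the sign of $\d_{xx}$ (which here is a constant equal to $-1/2$) is what makes both summands positive, and it is where the specific form $\d=\tfrac14(1-x^2)$ enters.

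Two smaller notes: your claim that the term $\eta=\deta+x$ ``pairs with $v$ to feed $\|\d^{(1+b)/2}\p_x^b\deta\|_0^2$'' is only accurate at $b=0$; at higher orders that control comes from the $(1+14\eta_x^{-3})\p_x^b\eta$ coefficient produced by commuting $\p_x^b$ through the elliptic term, whose positivity uses \eqref{g2bound}. And Hardy's inequality (Lemma~\ref{hardy}) is not actually needed here; the 1-d $\gamma=2$ argument closes with the weighted embedding \eqref{w-embed} alone (Hardy appears only in the 3-d proof).
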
 
\begin{proof}
{\it Step 1. A sequence of smooth  solutions.}
We consider initial conditions (\ref{ceuler0}c) given by
\begin{equation}\label{smooth1}
(\eta, v)  =( e, u_0^\kappa) \ \  \ \  \text{ in } \Omega_0   \times \{t=0\}   \,,
\end{equation} 
where $u_0^\kappa$ is given by \eqref{u0kappa}.  Notice that \eqref{ceuler0} is equivalent to  \eqref{ce0};  by the local-in-time well-posedness theorem 
 in \cite{CoSh2011}, using the smooth initial data \eqref{smooth1}, there exist a smooth solution $(\eta_\kappa, v_\kappa)$ on a short time-interval $[0,T]$
 (where $T$ depends on $\kappa$),
  such that for each $t \in [0,T]$,
 $\eta_\kappa( \cdot , t) \in H^7(\Omega_0)$,  $v_\kappa( \cdot , t) \in H^6(\Omega_0)$, and $\p_t v_\kappa( \cdot , t) \in H^5(\Omega_0)$.   We are thus able to consider higher-order energy estimates, as we can rigorously differentiate the sequence $(\eta_\kappa, v_\kappa)$ as many time as required.   For notational
 clarity, we will drop the subscript $\kappa$ and simply write $\eta$ and $v$.

\vspace{.05in}
\noindent
{\it Step 2. Energy estimates.}
We let $\p_x^5$ act on \eqref{euler-gamma2}, and letting $v_t:= \p_t v$, we find that
\begin{equation}\label{px5}
\alpha^3 \p_x^5 v_t + 2\alpha ^2 \dot\alpha  \p_x^5v +  (1+ 14 \eta_x^{-3})\p_x^5 \eta  - 2 \p_x \left( 6 \d_x  \eta_x^{-3} \p_x^5 \eta + \d  \eta_x^{-3} \p_x^6 \eta  \right) 
=\R_0+\R_1+\R_2+ \R_3  \,,
\end{equation} 
where the lower-order  
remainder terms for the fifth space differentiated problem are given by
\begin{align*} 
\R_0 & = 42 \eta_x^{-4} \p_x^2 \eta \, \p_x^4 \eta \,, \\
\R_1 & = -6 \p_x \left(  5 \d_x \eta_x^{-4} \p_x^2 \eta \p_x^4 \eta 
 + \d  \eta_x^{-4}\p_x^2 \eta \p_x^5 \eta \right)  + 27 \p_x \left(\eta_x^{-4} \p_x^2\eta \p_x^3 \eta\right)\,,\\
\R_2 & = -6 \p_x^2 \left(  
4\d_x \eta_x^{-4} \p_x^2 \eta \p_x^3 \eta 
+ \d  \eta_x^{-4}\p_x^2 \eta \p_x^4 \eta \right)  + 15\p_x^2\left(\eta_x^{-4} \p_x^2 \eta \p_x^2 \eta\right) \,, \\
\R_3 & = - 6\p_x^3 \left( 
3 \d \eta_x^{-4} \p_x^2 \eta \p_x^2 \eta + \d \eta_x^{-4}\p_x^2 \eta \p_x^3 \eta \right)  \,.
\end{align*}  
H\"{o}lder's inequality  and the Sobolev embedding theorem show that
\begin{equation}\nonumber
\| \d^3 (\R_0+ \R_1+\R_2+ \R_3)\|_0 \lesssim E_2(t) + \P(E_2(t)) \,.
\end{equation} 
We compute the $L^2$ inner-produce of equation \eqref{px5} with $\d^6 \p_x^5v$, and use that 
$$\p_x (\d^6 \p_x^5 v) = \d^5 (6 \d_x \p_x^5 v + \d \p_x^6 v) $$
to obtain the following identity:
\begin{align*} 
& \frac{d}{dt} \left( \| \alpha ^ {\frac{3}{2}} \d^3 \p_x^5 v\|_0^2 +  \int_{\Omega_0} (1+ 14 \eta_x^{-3}) \d^6 |\p_x^5 \eta|^2 dx \right) 
+ 4 \dot\alpha \|  \alpha \d^2 \p_x^4 v\|_0^2 \\
& \qquad 
+ 2 \frac{d}{dt} \int_{\Omega_0} \d^5 \eta_x^{-3} \left( 6\d_x \p_x^5 \eta + \d \p_x^6\eta\right)^2 dx
+  42\int_{\Omega_0}  \eta_x^{-4} v_x \d^6 |\p_x^5 \eta|^2 dx \\
& \qquad  + 6 \int_{\Omega_0} \d^5 \eta_x^{-4}  v_x\left( 6\d_x \p_x^5 \eta + \d \p_x^6\eta\right)^2 dx 
= 2 \int_{\Omega_0} \d^3  ( \R_0+\R_1+\R_2+ \R_3) \, \d^2 \p_x^5 v\, dx  \,,
\end{align*} 
and integrating in time, noting that $\dot\alpha(t) \ge 0$, and using 
 \eqref{g2bound}, we see that
\begin{align*} 
& \| \alpha ^ {\frac{3}{2}} \d^3 \p_x^5 v( \cdot , t) \|_0^2 + 8 \| \d^3 \p_x^5 \eta( \cdot , t)\|_0^2 
+   \int_{\Omega_0}  \left( 6\d_x \d^{2.5} \p_x^5 \eta + \d^{3.5} \p_x^6\eta\right)^2 dx \\
&  \qquad \lesssim  \|  \d^2 \p_x^4 u_0 \|_0^2
+   \int_0^t \|v_x\|_{L^\infty} \int_{\Omega_0} \left( 3 \d^{2.5}\d_x \p_x^5 \eta + \d^{3.5} \p_x^6\eta\right)^2 dx  ds \\
& \qquad \qquad 
+   \int_0^t \|v\|_{L^ \infty }  \int_{\Omega_0}  \d^6 |\p_x^5 \eta|^2 dxds
+  \int_0^t \int_{\Omega_0} \d^3  \left|(\R_0+ \R_1+\R_2+ \R_3) \, \d^3 \p_x^5 v\right|\, dx ds \,,
\end{align*} 
where we have used the fact that $\p_x^a \eta(x,0)=0$ for $a \ge 2$.

As $(\eta, v)$ are sufficiently smooth (thanks to Step 1), we can integrate-by-parts to find that
\begin{align*} 
 \int_{\Omega_0}  \left( 6\d_x \d^{2.5} \p_x^5 \eta + \d^{3.5} \p_x^6\eta\right)^2 dx  
 & = 
  \int_{\Omega_0}  \left( 36\d_x^2 \d^{5} |\p_x^5 \eta|^2 + 6 \d^6 \d_x  \p_x(|\p_x^5\eta|^2)+ \d^{7}| \p_x^6\eta| ^2\right) dx \\
  & = 
  \int_{\Omega_0}  \left( -6 \d^6 \d_{xx}|\p_x^5\eta|^2  + \d^{7}| \p_x^6\eta| ^2\right) dx \\
  & = 
  \int_{\Omega_0}  \left( 3 \d^6 |\p_x^5\eta|^2  + \d^{7}| \p_x^6\eta| ^2\right) dx \,,
\end{align*} 
the last equality following from \eqref{dxx}. Hence, since $\|v_x\|_{L^ \infty } \le 2 \|v_x\|_1$ on $\Omega_0$, 
\begin{align} 
& \| \alpha ^ {\frac{3}{2}} \d^3 \p_x^5 v( \cdot , t) \|_0^2 + \| \d^3 \p_x^5 \deta( \cdot , t)\|_0^2 + \| \d^{3.5} \p_x^6 \deta( \cdot , t)\|_0^2 
\nonumber \\
&  \lesssim  e_2(0)+   \int_0^t \alpha ^ {-\frac{3}{2}} \| \alpha ^ {\frac{3}{2}} v\|_{2}\int_{\Omega_0}  \left(   |\d^3\p_x^5\deta|^2  + | \d^{3.5}\p_x^6\deta| ^2\right) dx ds 
\nonumber\\
& \qquad \qquad \qquad \qquad \qquad 
+  \int_0^t \alpha ^ {-\frac{3}{2}} \|\d^3  (\R_0+ \R_1+\R_2+ \R_3) \|_0 \,    \| \alpha ^ {\frac{3}{2}} \d^3 \p_x^5 v\|_0 ds  \,,
\nonumber\\
&  \lesssim e_2(0)+ \sup_{s \in [0,t]} P(E_2(s))  \int_0^t \alpha ^ {-\frac{3}{2}}ds  \lesssim e_2(0) + \sup_{s \in [0,t]} P(E_2(s)) \,, \label{g25}
\end{align} 
where we have used the fact that $\int_0^ t \alpha ^ {-\frac{3}{2}}(s)ds \le C <\infty$  uniformly for $t \ge 0$.

Next for $ b=2,3,4$, we let $\p_x^b$ act on \eqref{euler-gamma2} and compute the $L^2$ inner-product with $\d^{b+1} \p_x^b v$.
 Following identically the strategy given above
for the $\p_x^5$-problem, we find that
\begin{align} 
 \| \alpha ^ {\frac{3}{2}} \d^{2.5} \p_x^4 v( \cdot , t) \|_0^2 + \| \d^{2.5} \p_x^4 \deta( \cdot , t)\|_0^2 + \| \d^{3} \p_x^5 \deta( \cdot , t)\|_0^2 
 &\lesssim e_2(0) + \sup_{s \in [0,t]} P(E_2(s)) \,, \label{g24} \\
 \| \alpha ^ {\frac{3}{2}} \d^2 \p_x^3 v( \cdot , t) \|_0^2 + \| \d^2 \p_x^3 \deta( \cdot , t)\|_0^2 + \| \d^{2.5} \p_x^4 \deta( \cdot , t)\|_0^2 
 &\lesssim e_2(0)+ \sup_{s \in [0,t]} P(E_2(s)) \,, \label{g23} \\
  \| \alpha ^ {\frac{3}{2}} \d^{1.5} \p_x^2 v( \cdot , t) \|_0^2 + \| \d^{1.5} \p_x^2 \deta( \cdot , t)\|_0^2 + \| \d^{2} \p_x^3 \deta( \cdot , t)\|_0^2 
 &\lesssim e_2(0) + \sup_{s \in [0,t]} P(E_2(s)) \,. \label{g22}
 \end{align}

 We next consider the $ \p_x^b$-problems with $b=0$ and $1$.
Since
 $$
 \eta_x^{-2} = \left( 1+ \deta_x\right)^{-2} = 1 - 2 \deta_x  + \P(\deta_x) \,,
 $$
the momentum equation
 \eqref{euler-gamma2} can be written as
 \begin{align} 
 0&= 
 \alpha^3 \d\p_t v + 2\alpha ^2 \dot\alpha \d v + \d\, \deta + \d \, x+\p_x \left[ \d^2 (1-2 \deta_x) \right] +\p_x \left[ \d^2 \P(\deta_x)\right] \nonumber  \\
 &=
  \alpha^3 \d\p_t v + 2\alpha ^2 \dot\alpha \d v + \d\,\deta + \d \, x  + 2\d \, \d_x-2 \p_x (\d^2 \deta_x )+\p_x \left[ \d^2 \P(\deta_x)\right] \nonumber\\
   &=
  \alpha^3 \d\p_t v + 2\alpha ^2 \dot\alpha \d v + \d\, \deta -2 \p_x (\d^2 \deta_x )+\p_x \left[ \d^2 \P(\deta_x)\right] \,, \nonumber
\end{align} 
 where we have used the fact that $\d= {\frac{1}{4}} (1-x^2)$ and so $ x + 2\d_x =0$.   We then compute the $L^2$ inner-product of  this equation with 
 $v= \p_t \deta$ to
 find that
 \begin{align*} 
\frac{d}{dt}  \int_{\Omega_0} \left[ \alpha^3 \d v^2  + \d\, \deta^2 + 2\d^2 \deta_x^2 \right] dx      +   4\int_{\Omega_0} \dot\alpha \alpha^ 2  \d v^2 dx 
= \int_{\Omega_0} \d^2 \P(\deta_x) v_x dx \,.
\end{align*} 
Integrating from $0$ to $t$ and using the fact that both $\deta$ and $\deta_x$ vanish at $t=0$, we have that
\begin{align} 
&\| \alpha^{\frac{3}{2}}  \d^ {\frac{1}{2}}  v(\cdot ,t)\|_0^2   + \|\d^ {\frac{1}{2}} \, \deta( \cdot ,t) \|_0^2  + \| \d\,  \deta_x^2(\cdot ,t) \|_0^2 \nonumber   \\
 &\qquad \le \|  \d^ {\frac{1}{2}}  u_0\|_0^2    + \int_0^t \alpha^{- \frac{3}{2}}(s)  \int_{\Omega_0} \d^2 \P(\deta_x)  \alpha^{\frac{3}{2}} (s) v_x dx \, ds
  \lesssim e_2(0) + \sup_{s \in [0,t]} \P(E_2(s)) \,. \label{g2p0}
\end{align} 
 
For the $\p_x$-problem, we write
 \eqref{euler-gamma2} as
\begin{align*} 
   \alpha^3 \p_t v + 2\alpha ^2 \dot\alpha v + \deta -4\d_x\, \deta - 2\d\, \deta_x
   +\p_x \left[ \d^2 \P(\deta_x)\right]  =0
\end{align*} 
and differentiate to find that
 $$
\alpha^3 \p_x v_t + 2\alpha ^2 \dot\alpha \p_xv + \p_x\deta -2\p_x\left[ 2\d_x\, \deta + \d\, \deta_x \right]
+\p_x \left[ \d^2 \P(\deta_x)\right]  =0 \,.
 $$
Computing the $L^2$ inner-product  of this equation with $\d^2  \p_x v$ shows that
\begin{align*} 
\frac{d}{dt}  \int_{\Omega_0} \left[ \alpha^3 \d^2 v_x^2  + \d^2 \deta_x^2 + 2\d\left(2\d_x\, \deta + \d\, \deta_x\right)^2 \right] dx      +   4\int_{\Omega_0} \dot\alpha \alpha^ 2  \d^2 v_x^2 dx 
= \int_{\Omega_0} \d^2 \P(\deta_x) v_x dx \,.
\end{align*} 
Integrating from $0$ to $t$, the same argument used to obtain \eqref{g2p0} provides the bound
\begin{align} 
\| \alpha^{\frac{3}{2}}  \d  \p_x v(\cdot ,t)\|_0^2 + \|  \d  \p_x \deta(\cdot ,t)\|_0^2
  \lesssim e_2(0) + \sup_{s \in [0,t]} \P(E_2(s)) \,.\label{g2p1}
  \end{align}

 \vspace{.05in}
\noindent
{\it Step 3. Building the higher-order norm $E_2(t)$.} From \eqref{g25}--\eqref{g2p1}, we have that
$e_2(t)  \lesssim e_2(0) + \sup_{s \in [0,t]} P(E_2(s))$.  Thus,
$$
\alpha^3(t) \sum_{ a=0}^5\int_{\Omega_0} \d^6  |\p_x^a v(x, t)|^2 dx \lesssim e_2(0) + \sup_{s \in [0,t]} P(E_2(s)) \,,
$$
and hence by the embedding \eqref{w-embed},
$$
\alpha^3(t) \left(\|v( \cdot , t)\|_2^2 + \sum_{ a=0}^3\int_{\Omega_0} \d^2  |\p_x^a v(x, t)|^2 dx 
+ \sum_{ a=0}^4\int_{\Omega_0} \d^4  |\p_x^a v(x, t)|^2 dx
\right)\lesssim e_2(0) + \sup_{s \in [0,t]} P(E_2(s)) \,.
$$
It follows that
\begin{equation}\label{v-energy1}
\sup_{s \in [0,t]}  \sum_{b=0}^3 \| \alpha^{\frac{3}{2}}\d^bv( \cdot , s)\|_{2+b}^2 
 \lesssim e_2(0)+ \sup_{s \in [0,t]} P(E_2(s)) \,.
\end{equation} 

Similarly,  the embedding estimate \eqref{w-embed} also shows that
\begin{equation}\label{eta-energy1}
\sup_{s \in [0,t]} 
\left(\| \d^{\frac{7}{2}}  \p_x^6 \deta( \cdot , t)\|_0^2
+
 \sum_{b=0}^3 \| \d^b \deta( \cdot , s)\|_{2+b}^2  
\right)  \lesssim e_2(0)+ \sup_{s \in [0,t]} P(E_2(s)) \,.
\end{equation} 
Then, from the Sobolev embedding theorem, 
\begin{equation}\label{ev-energy}
\sup_{s \in [0,t]} 
\left(\| \deta( \cdot , t)\|_{ W^{1, \infty }}^2
+
\|\alpha^{\frac{3}{2}} v( \cdot , t)\|_{ W^{1, \infty }}^2
\right)  \lesssim e_2(0)+ \sup_{s \in [0,t]} P(E_2(s)) \,.
\end{equation} 

 \vspace{.05in}
\noindent
{\it Step 4. Bound for $E_2(t)$ and global existence.}  The inequalities \eqref{v-energy1}--\eqref{ev-energy} show that
$$
\sup_{s \in [0,t]} E_2(s)
  \lesssim e_2(0) + \sup_{s \in [0,t]} P(E_2(s)) \text{ for } t \in [0,T]\,,
$$
where $T$ is independent of $ \kappa$, since $e_2(0)$ is  independent of $\kappa$.   By assumption $e_2(0) \le \epsilon$.
By choosing $ \epsilon $ sufficiently small, we have that $ E_2(t) \lesssim \epsilon$ for all $t \in [0,T]$.  Then, by the standard continuation argument,
\begin{equation}\nonumber
 E_2(t) \lesssim \epsilon \ \ \text{ for all } t \ge 0\,.
\end{equation} 
Hence, by the  Sobolev embedding theorem, $\| \eta_x -1\|_{L^ \infty } \lesssim \epsilon $ so that  setting $\varepsilon = C\, \epsilon $, 
$$
1- \varepsilon \le \eta_x(x,t) \le 1+ \varepsilon  \  \text{ for } \ t \ge 0\,,
$$
thus verifying \eqref{ss0}.

 \vspace{.05in}
\noindent
{\it Step 5. The limit as $\kappa \to 0$.} In order to produce our energy bounds, we have used the 
smooth sequence $(\eta_\kappa, v_\kappa)$.  Since we have established that
$
\| \alpha(t) ^ {\frac{3}{2}} \d^3 v_\kappa( \cdot ,t) \|_5^2 +  \| \d^3 (\eta_\kappa(\cdot ,t) -x) \|_5^2 \lesssim \epsilon $ with
$ \epsilon $ independent of $\kappa$, we have compactness, and it is a standard argument to show that 
$\eta_k \to \eta$ in  $C^2(\overline\Omega_0 \times [0,T])$ for any $T < \infty $, where $\eta$ is a solution of \eqref{ceuler0}, and
$E_2(t) \lesssim \epsilon $.
 \end{proof}

 \subsubsection{Global existence for $ \gamma>3$}\label{sec:1dgamma}
We now explain how to treat the case where $\gamma>3$.
The momentum equation (\ref{ceuler0}b) takes the form 
\begin{equation}\label{euler-gamma4a}
\alpha^\gpo \p_t v + 2\alpha ^\gamma \dot\alpha v + \eta + \frac{\gamma}{\gamma-1} \dx \nxg  -\gamma \d \nxgo  \p^2_x \eta =0 \ \ \text{ in } \Omega_0   \times (0,T] \,.
\end{equation} 
and it appears that energy estimates might
 produce an ``anti-damping'' term,  which is explained by focusing on only the first two terms of the equation: $\alpha^{\gamma+1} \p_t v + 2\alpha ^\gamma \dot\alpha v$.
Computing the $L^2$ inner-product with $v$ shows that
$$
{\frac{\alpha ^{\gamma+1}}{2}}  \frac{d}{dt} \|v\|_0^2 +  2\alpha ^{\gamma} \dot\alpha \|v\|_0^2 = {\frac{1}{2}}\frac{d}{dt} \left(\alpha ^{\gamma+1} \|v\|_0^2 \right) 
+ \left(2- \frac{\gamma+1}{2}\right) \alpha ^\gamma \dot\alpha \|v\|_0^2 \,.
$$
If $\gamma>3$, then $ 2 - \frac{\gamma +1}{2} < 0$ which
 produces the so-called anti-damping effect described in \cite{HaJa2016}.   In order to avoid such anti-damping, it is 
necessary that
$ 2 - \frac{\gamma +1}{2} \ge 0$ which only holds for $\gamma \le 3$ in 1-d (and  for $\gamma \le\frac{5}{3} $ in 3-d).     

Thus, in order to close energy estimates when $\gamma>3$ we shall divide (\ref{ceuler0}b) by
 $ \alpha ^{\gamma -3}$.
We are thus lead to consider instead the equation
 \begin{equation}\label{euler-gamma4}
 \alpha^4 \p_t v + 2\alpha ^3 \dot\alpha v + \agt \eta + \frac{\gamma}{\gamma-1} \agt \dx \nxg  -\gamma \agt \d \nxgo  \p^2_x \eta =0
  \ \ \text{ in } \Omega_0   \times (0,T] \,.
\end{equation} 

Continuing to denote the near-identity map $ \deta(x,t) = \eta(x,t) -x$.
\begin{definition}[Norm for $\gamma >3$ in 1-d] The higher-order energy function for 
  $\gamma>3$ is given by
\begin{align}
e_\gamma(t) &= 
\| \agtt \d^{\frac{5\gamma-4}{2\gamma-2}}  \p_x^5 \deta( \cdot , t)\|_0^2
+ \sum_{a=0}^4 \left(  \| \alpha(t)^2 \d^{\frac{1+a(\gamma-1)}{2\gamma -2}}
\p_x^a v( \cdot ,t) \|_0^2 +  \| \d^{\frac{1+a(\gamma-1)}{2\gamma -2}}\p_x^a \deta( \cdot ,t) \|_0^2  \right)
\,, \nonumber
\end{align} 
and we set
$$
E_\gamma(t) = e_\gamma(t) + \|\deta(\cdot , t)\|^2_{W^{1, \infty }}+\|\alpha^2(t) v(\cdot , t)\|^2_{W^{1, \infty }}+ \sum_{b=0}^2 \left( \|\alpha^2(t) \d^b v(\cdot , t)\|^2_{\frac{4\gamma-5}{2\gamma -2} + b}
+  \| \d^b \deta(\cdot , t)\|^2_{\frac{4\gamma-5}{2\gamma -2} + b}\right)\,.
$$
\end{definition}

From  \eqref{goodbound}, we have that
\begin{equation}   \label{g4bound}
\frac{1}{2} \le \eta_x ^{-\gamma-1} (x,t)  \ \text{ and } \ \eta_x ^{-\gamma-2} (x,t)  \le 2 \ \text{ for } t \ge 0 \,.
\end{equation}

\begin{theorem}  For $\gamma>3$ and  $ \epsilon >0$ taken sufficiently small, if the initial data satisfies
 $e_\gamma(0) <  \epsilon $, then there exists a global unique solution of the Euler equations \eqref{ceuler},  such that $E_\gamma(t) \le \epsilon $ for all $t \ge 0$.
Furthermore, $\xi( x,t) = \alpha(t) \eta(x,t) $ is a global solution to the 1-d Euler equations \eqref{ceuler}.
\end{theorem}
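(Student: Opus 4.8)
The plan is to run the same five steps as in the proof of the preceding ($\gamma=2$) theorem, the only genuinely new ingredient being that one works throughout with the rescaled momentum equation \eqref{euler-gamma4}, obtained by dividing (\ref{ceuler0}b) by $\alpha^{\gamma-3}$. \textbf{Step 1.} Regularize the data: replace $u_0$ by the mollification $u_0^\kappa$ of \eqref{u0kappa} and apply the local well-posedness theorem of \cite{CoSh2011} to obtain, for each fixed $\kappa\in(0,\kappa_0)$, a smooth local-in-time solution $(\eta_\kappa,v_\kappa)$ of \eqref{ceuler0} (equivalently of \eqref{ce0}) on an interval $[0,T]$; I would suppress the subscript $\kappa$ while deriving estimates, the bound $\|u_0^\kappa\|_s\lesssim|\text{ln}\,\kappa|^s\|u_0\|_0$ being used only to license the differentiations and not entering the final estimate, which depends only on the $\kappa$-independent quantity $e_\gamma(0)$.

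\textbf{Step 2} is the heart of the matter. For $b=0,1,\dots,5$ I would apply $\p_x^b$ to \eqref{euler-gamma4} and take the $L^2(\Omega_0)$ inner product with the weighted multiplier $\d^{\frac{1+b(\gamma-1)}{\gamma-1}}\p_x^b v$ (for $\gamma=2$ this weight is $\d^{1+b}$, as in the $\p_x^5$-estimate above). Three structural facts make the scheme close for every $\gamma>3$. First, after the division by $\alpha^{\gamma-3}$ the damping coefficient is exactly $2\alpha^3\dot\alpha$, so the $\alpha^4\p_t v$ term and the damping term combine into precisely $\tfrac12\tfrac{d}{dt}\!\left(\alpha^4\|\cdot\|_0^2\right)$ with no residual multiple of $\alpha^3\dot\alpha\|\cdot\|_0^2$ --- there is no anti-damping, and this is the whole point of the rescaling; it is also why the $v$-energies in $e_\gamma$ carry the weight $\alpha^2$. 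Second, since $\d=\tfrac{\gamma-1}{2\gamma}(1-x^2)$ one has $x+\tfrac{\gamma}{\gamma-1}\dx=0$, which lets me rewrite the elliptic part of \eqref{euler-gamma4}, after multiplication by $\d^{1/(\gamma-1)}$ and exactly parallel to the $\gamma=2$ manipulation, as $\agt\,\d^{1/(\gamma-1)}\deta$ plus the perfect-derivative term $-\gamma\agt\,\p_x\!\left(\d^{\gamma/(\gamma-1)}\deta_x\right)$ plus a cubic-and-higher remainder $\agt\,\p_x\!\left(\d^{\gamma/(\gamma-1)}\P(\deta_x)\right)$; commuting $\p_x^b$ through this and integrating by parts, using that $\d_{xx}=-\tfrac{\gamma-1}{\gamma}$ is constant precisely as for $\gamma=2$, converts the top-order degenerate cross term into a nonnegative contribution and produces the coercive quantities of $e_\gamma$, the factor $\agt=\alpha^{3-\gamma}$ multiplying the elliptic operator being exactly what attaches the weight $\agtt$ to the top-order term $\|\agtt\d^{\frac{5\gamma-4}{2\gamma-2}}\p_x^5\deta\|_0^2$. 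Third, whenever $\agt$ is carried across a time derivative it generates an error coefficient $\sim\alpha^{2-\gamma}\dot\alpha\sim(t+1)^{2-\gamma}$, which lies in $L^1(0,\infty)$ because $\gamma>3>2$ and, because $\gamma-3>0$, has the favorable sign, so it may simply be discarded.

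The feature that is genuinely new relative to the $\gamma=2$ argument is that below the top order the factor $\agt$ leaves the direct energy identity controlling only the decaying quantity $\alpha^{3-\gamma}\|\d^{(\cdot)}\p_x^b\deta\|_0^2$; indeed one cannot simultaneously extract no-anti-damping control of $v$ and $\alpha$-free control of $\deta$ from the same identity. The $\alpha$-free control of $\|\d^{(\cdot)}\p_x^b\deta\|_0$ for $b\le4$ that $e_\gamma$ requires is therefore recovered by slaving $\deta$ to $v$: from $\deta(\cdot,t)=\int_0^t v(\cdot,s)\,ds$ together with $\|\d^{(\cdot)}\p_x^b v(\cdot,s)\|_0\le\alpha^{-2}(s)\,E_\gamma(s)^{1/2}$ and the fact --- valid for \emph{every} $\gamma>1$, and the reason there is no upper bound on $\gamma$ --- that $\int_0^\infty\alpha^{-2}(s)\,ds<\infty$. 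The commutator remainders produced by distributing $\p_x^b$ over the nonlinearities are estimated exactly as for $\gamma=2$ --- by H\"older's inequality, the weighted embedding \eqref{w-embed}, the higher-order Hardy inequality \eqref{Hardy}, and the Sobolev embedding theorem --- and each time integral of an error picks up a factor $\alpha^{-2}$ from extracting $v$ or $v_x$ out of an $\alpha^2$-weighted norm, so it is dominated by $\sup_{[0,t]}\P(E_\gamma)\int_0^\infty\alpha^{-2}\,ds$. In \textbf{Step 3} I would use \eqref{w-embed} and Sobolev embedding to promote $e_\gamma(t)$ to the full norm $E_\gamma(t)$, arriving at $\sup_{[0,t]}E_\gamma\lesssim e_\gamma(0)+\sup_{[0,t]}\P(E_\gamma)$ on a $\kappa$-independent interval. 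In \textbf{Step 4}, for $\epsilon$ small the standard continuation argument yields $E_\gamma(t)\lesssim\epsilon$ for all $t\ge0$, in particular $\|\eta_x-1\|_{L^\infty}\lesssim\epsilon$, which retroactively justifies the a priori bounds \eqref{eta_bound}, \eqref{ss0} and \eqref{g4bound}. In \textbf{Step 5} the $\kappa$-uniform bounds give compactness, $\eta_\kappa\to\eta$ in $C^2(\overline\Omega_0\times[0,T])$ for every $T<\infty$, $\eta$ solves \eqref{ceuler0} with $E_\gamma(t)\lesssim\epsilon$, and $\xi=\alpha\eta$ is the asserted global solution of \eqref{ceuler}; the pointwise decay of the perturbed velocity follows from $\|v(\cdot,t)\|_{L^\infty}\lesssim\alpha^{-2}(t)\to0$.

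I expect the only conceptual obstacle to be exactly the anti-damping term, which has already been neutralized by the $\alpha^{\gamma-3}$ rescaling; the remaining work --- the real content of Step 2 --- is the bookkeeping of the weights in $e_\gamma$ (the exponents $\tfrac{1+a(\gamma-1)}{2\gamma-2}$ on $\d$, the $\alpha^2$ on the $v$-terms, the $\agtt$ on the top-order $\deta$-term), and checking that with these choices every error term, once the $\agt$-factors and their time derivatives are accounted for and $\deta$ is slaved to $v$, carries a net power of $\alpha$ that is integrable in time, or else a favorable sign, uniformly across the whole range $\gamma>3$.
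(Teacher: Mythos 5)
Your proposal is correct and follows essentially the same route as the paper: divide the momentum equation by $\alpha^{\gamma-3}$ to kill the anti-damping, test $\p_x^b$ of \eqref{euler-gamma4} against $\d^{\frac{1+b(\gamma-1)}{\gamma-1}}\p_x^b v$, exploit the sign $\gamma-3>0$ on the $\alpha^{2-\gamma}\dot\alpha$ terms, and recover the $\alpha$-free control of $\deta$ required by $e_\gamma$ by slaving $\deta$ to $v$ through $\deta=\int_0^t v\,ds$ together with $\int_0^\infty\alpha^{-2}ds<\infty$. One small bookkeeping slip: the paper's $e_\gamma$ for $\gamma>3$ only runs the sum to $a=4$ (with a top term $\agtt\d^{\frac{5\gamma-4}{2\gamma-2}}\p_x^5\deta$), so the differentiated problems are $b=0,\dots,4$, not $b=0,\dots,5$; the range $0$ to $5$ is the $\gamma=2$ count, and for $\gamma>3$ one fewer derivative suffices for the weighted-Sobolev embedding of $v_x$.
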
 
\begin{proof}
{\it Step 1.}
Just as in the proof  for $\gamma=2$, we smooth the initial data.

\vspace{.05in}
\noindent
{\it Step 2. Energy estimates.}
We let $\p_x^4$ act on \eqref{euler-gamma4}, and letting $v_t:= \p_t v$, we find that
\begin{align}
&\alpha^4 \p_x^4 v_t + 2\alpha ^3 \dot\alpha  \p_x^4v + \agt [1+(6\gamma -3) \eta_x^{-\gamma-1}]\p_x^4 \eta   \nonumber \\
&\qquad\qquad\qquad
- \agt \gamma \p_x \left( \frac{4\gamma -3}{\gamma-1}  \d_x  \eta_x^{-\gamma -1} \p_x^4 \eta + \agt \d  \eta_x^{-\gamma -1} \p_x^5 \eta  \right) 
=\R_0+\R_1+\R_2+ \R_3  \,,  \label{px5ii}
\end{align} 
where the lower-order  remainder terms for the fourth space differentiated problem are given by
\begin{align*} 
\R_0 & =   - \agt (6 \gamma ^2 + 3 \gamma  -3) \eta_x^{ - \gamma -2} \p_x^2 \eta \p_x^3 \eta \,, \\
\R_1 & = -\agt  \gamma ( \gamma +1) \p_x \left(\frac{3\gamma -2}{ \gamma -1} \d_x  \eta_x^{-\gamma -2}  \p_x^2 \eta \p_x^3\eta  
+ \d  \eta_x^{-\gamma -2}  \p_x^2 \eta \p_x^4 \eta  \right) +(3 \gamma ^2 + \gamma + 1) \p_x\left( \eta_x^{ \gamma -2} \p_x^2 \eta \p_x^2 \eta \right)\,,\\
\R_2 & = -\agt \gamma ( \gamma +1) \p_x^2 \left(\frac{2\gamma -1}{\gamma-1} \d_x  \eta_x^{-\gamma-2}  \p_x^2 \eta \p_x^2\eta  + \d  \eta_x^{-\gamma -2}  \p_x^2 \eta \p_x^3 \eta \right)\,,\\
\R_3 & =  - \agt \gamma ( \gamma +1) \p_x^3 \left( \d  \eta_x^{-\gamma -2}  \p_x^2 \eta \p_x^2 \eta \right)\,,\\
  \,.
\end{align*}  

We compute the $L^2$ inner-produce of equation \eqref{px5ii} with $\d^{\frac{4\gamma-3}{\gamma -1}} \p_x^4v$, use the fact that 
$$\p_x (\d^{\frac{4\gamma-3}{\gamma -1}} \p_x^4 v) =\d^{\frac{3\gamma-2}{\gamma -1}}\left(\frac{4\gamma-3}{\gamma-1}  \d_x \p_x^4 v + \d \p_x^5 v\right) \,,$$
and with $ \dot \alpha(t) > 0$, we
obtain the following inequality:
\begin{align*} 
& \frac{d}{dt} \left( \| \alpha^2 \d^{\frac{4\gamma-3}{2\gamma -2}} \p_x^4 v\|_0^2 +  \agt  \int_{\Omega_0} \d^{\frac{4\gamma-3}{\gamma -1}} (1+ 21 \eta_x^{-5})  |\p_x^4 \eta|^2 dx \right)  \\
& \ 
+(\gamma+1) (6\gamma -3)
\agt  \int_{\Omega_0} \d^{\frac{4\gamma-3}{\gamma -1}} \eta_x^{-\gamma -2} \, v_x\,  |\p_x^4 \eta|^2 dx   
\\
& \ 
+ \gamma \frac{d}{dt} \agt \int_{\Omega_0}\d^{\frac{3\gamma-2}{\gamma -1}}\eta_x^{-5} \left( \frac{4\gamma-3}{\gamma-1} \d_x \p_x^4 \eta + \d \p_x^5\eta\right)^2 dx
\\
& \   + \gamma(\gamma+1) \int_{\Omega_0}\d^{\frac{3\gamma-2}{\gamma -1}}\eta_x^{-6}  v_x\left( \frac{4\gamma-3}{\gamma-1} \d_x \p_x^4 \eta + \d \p_x^5\eta\right)^2 dx 
\le 2 \int_{\Omega_0} \d^{\frac{4\gamma-3}{2\gamma -2}} \left| ( \R_0+\R_1+\R_2+ \R_3) \, \d^{\frac{4\gamma-3}{2\gamma -2}} \p_x^4 v\right|\, dx  \,.
\end{align*} 
Integrating in time and using \eqref{dxx} and  \eqref{g4bound}, we see that
\begin{align*} 
& \| \alpha ^2\d^{\frac{4\gamma-3}{2\gamma -2}} \p_x^4 v( \cdot , t) \|_0^2 
+  \| \agtt \d^{\frac{4\gamma-3}{2\gamma -2}} \p_x^4 \eta( \cdot , t)\|_0^2 
+ \agt  \int_{\Omega_0}  \left( \frac{4\gamma-3}{\gamma-1} \d_x \d^{\frac{3\gamma-2}{2\gamma-2}} \p_x^4 \eta + \d^{\frac{5\gamma-4}{2\gamma-2}} \p_x^5\eta\right)^2 dx \\
&  \qquad \lesssim  \|  \d^{\frac{4\gamma-3}{2\gamma -2}} \p_x^4 u_0 \|_0^2
+ \agt \int_0^t \|v_x\|_{L^\infty} \int_{\Omega_0}  \left( \frac{4\gamma-3}{\gamma-1} \d_x \d^{\frac{3\gamma-2}{2\gamma-2}} \p_x^4 \eta +  \d^{\frac{5\gamma-4}{2\gamma-2}} \p_x^5\eta\right)^2  dx  ds \\
& \qquad \qquad 
+  \agt \int_0^t \|v\|_{L^ \infty }  \int_{\Omega_0}  \d^{\frac{4\gamma-3}{\gamma -1}} |\p_x^4 \eta|^2 dxds
+   \int_0^t \int_{\Omega_0} \d^{\frac{4\gamma-3}{2\gamma -2}}  \left|(\R_0+ \R_1+\R_2+ \R_3) \, \d^{\frac{4\gamma-3}{2\gamma -2}} \p_x^4 v\right|\, dx ds \,,
\end{align*} 
where we have used the fact that $\p_x^a \eta(x,0)=0$ for $a \ge 2$.

Again, we use the fact that we are working with a smooth sequence $(\eta, v)$, and so  we can integrate-by-parts to find that
\begin{align*} 
 & \int_{\Omega_0}  \left( \frac{4\gamma-3}{\gamma-1} \d_x \d^{\frac{3\gamma-2}{2\gamma-2}} \p_x^4 \eta + \d^{\frac{5\gamma-4}{2\gamma-2}} \p_x^5\eta\right)^2 dx  \\
 & \qquad = 
  \int_{\Omega_0}  \left( {\frac{(4\gamma -3)^2}{(\gamma-1)^2}} \d_x^2\d^{\frac{3\gamma-2}{\gamma -1}}|\p_x^4 \eta|^2 
  + \frac{4\gamma-3}{\gamma-1}  \d^{\frac{4\gamma-3}{\gamma -1}} \d_x  \p_x(|\p_x^4\eta|^2)+ \d^{\frac{5\gamma -4}{\gamma-1}}| \p_x^5\eta| ^2\right) dx \\
  &\qquad = 
  \int_{\Omega_0}  \left( -\frac{4\gamma-3}{\gamma-1}  \d^{\frac{4\gamma-3}{\gamma -1}} \d_{xx}|\p_x^4\eta|^2  +  \d^{\frac{5\gamma -4}{\gamma-1}}| \p_x^5\eta| ^2\right) dx 
  \\
  & \qquad 
  = 
  \int_{\Omega_0}  \left( \frac{4\gamma-3}{\gamma} \d^{\frac{4\gamma-3}{\gamma -1}} |\p_x^4\eta|^2+ \d^{\frac{5\gamma-4}{\gamma-1}}| \p_x^5\eta| ^2\right) dx \,,
\end{align*} 
the last equality following from \eqref{dxx}. It follows that
\begin{align*} 
&\| \alpha ^2\d^{\frac{4\gamma-3}{2\gamma -2}} \p_x^4 v( \cdot , t) \|_0^2 +  \| \agtt \d^{\frac{4\gamma-3}{2\gamma -2}} \p_x^4 \eta( \cdot , t)\|_0^2  
+ \| \agtt \d^{\frac{5\gamma-4}{2\gamma-2}} \p_x^5 \eta( \cdot , t)\|_0^2 \\
&  \ \lesssim  \|  \d^{\frac{4\gamma-3}{2\gamma -2}} \p_x^4 u_0 \|_0^2
+   \int_0^t \|v_x\|_{L^\infty}   \left(  \|\agtt \d^{\frac{4\gamma-3}{2\gamma -2}} \p_x^4 \eta( \cdot , t)\|_0^2  + + \| \agtt \d^{\frac{5\gamma-4}{2\gamma-2}} \p_x^5 \eta( \cdot , t)\|_0^2\right)  ds \\
& \
+   \int_0^t \|v\|_{L^ \infty }  \int_{\Omega_0}  \d^{\frac{4\gamma-3}{\gamma -1}} |\agtt \p_x^4 \eta|^2 dxds
+   \int_0^t \int_{\Omega_0} \d^{\frac{4\gamma-3}{2\gamma -2}}  \left|(\R_0+ \R_1+\R_2+ \R_3) \, \d^{\frac{4\gamma-3}{2\gamma -2}} \p_x^4 v\right|\, dx ds \,.
\end{align*} 
Using the fact that $\int_0^t \alpha (s) ^{-2} ds\le C < \infty $ for all $t \ge 0$, we have that
\begin{align*} 
&  \int_0^t \|v_x\|_{L^\infty}   \left(  \|\agtt \d^{\frac{4\gamma-3}{2\gamma -2}} \p_x^4 \eta( \cdot , t)\|_0^2  +  \| \agtt \d^{\frac{5\gamma-4}{2\gamma-2}} \p_x^5 \eta( \cdot , t)\|_0^2\right)  ds  \\
& \ = \int_0^t\alpha ^{-2} \| \alpha ^2 v_x\|_{L^\infty}   \left(  \|\agtt \d^{\frac{4\gamma-3}{2\gamma -2}} \p_x^4 \eta( \cdot , t)\|_0^2  +  \| \agtt \d^{\frac{5\gamma-4}{2\gamma-2}} \p_x^5 \eta( \cdot , t)\|_0^2\right)  ds \lesssim \sup_{s \in [0,t]} P(E_\gamma(s)) \,,
\end{align*} 
and similarly that
\begin{align*} 
& \int_0^t \|v\|_{L^ \infty }  \int_{\Omega_0}  \d^{\frac{4\gamma-3}{\gamma -1}} |\agtt \p_x^4 \eta|^2 dxds 
 =  \int_0^t \alpha ^{-2} \|\alpha ^2 v\|_{L^ \infty }  \int_{\Omega_0}  \d^{\frac{4\gamma-3}{\gamma -1}} |\agtt \p_x^4 \eta|^2 dxds  \lesssim \sup_{s \in [0,t]} P(E_\gamma(s))\,.
\end{align*} 

Furthermore, since each $\R_a$, $a=0,1,2,3$ is at least quadratic in $E(t)$, H\"{o}lder's inequality  and the Sobolev embedding theorem shows that
\begin{equation}\nonumber
\| \d^{\frac{4\gamma-3}{2\gamma -2}} (\R_0+ \R_1+\R_2+ \R_3)\|_0 \le \agtt [E_\gamma(t)+ \P(E_\gamma(t))] \,,
\end{equation} 
since only $d^{ \frac{5\gamma-4}{2\gamma-2}} \p_x^5 \deta( \cdot , t)$ is weighted by $ \agtt(t)$ in $e_\gamma(t)$, while the lower-order derivatives of $\deta$ have no 
$\alpha$-time-weights.   Hence,
\begin{align*} 
& \int_0^t \int_{\Omega_0} \d^{\frac{4\gamma-3}{2\gamma -2}}  \left|(\R_0+ \R_1+\R_2+ \R_3) \, \d^{\frac{4\gamma-3}{2\gamma -2}} \p_x^4 v\right|\, dx ds 
  \le  \int_0^t  \alpha^{1-\gamma} \P(E_\gamma(s))  \| \alpha ^2 \d^{\frac{4\gamma-3}{2\gamma -2}} \p_x^4 v\|_0
  \lesssim \sup_{s \in [0,t]} P(E_\gamma(s) \,.
 \end{align*} 
 We have thus established that
 \begin{align} 
 \| \alpha ^2\d^{\frac{4\gamma-3}{2\gamma -2}} \p_x^4 v( \cdot , t) \|_0^2 +  \| \agtt \d^{\frac{4\gamma-3}{2\gamma -2}} \p_x^4 \deta( \cdot , t)\|_0^2  + \|\agtt d^{ \frac{5\gamma-4}{2\gamma-2}} \p_x^5 \deta( \cdot , t)\|_0^2
  \lesssim e_\gamma(0) + \sup_{s \in [0,t]} P(E_\gamma(s)) \,. \label{g44}
\end{align}

Next, we let $\p_x^3$ act on \eqref{euler-gamma4} and compute the $L^2$ inner-product with $\d^{(3\gamma-2)/(\gamma-1)} \p_x^3 v$ and then
let $\p_x^2$ act on \eqref{euler-gamma4} and compute the $L^2$ inner-product with $\d^{(2\gamma-1)/(\gamma-1)} \p_x^2 v$.  Following identically the strategy given above
for the $\p_x^4$-problem, we find that
\begin{align} 
 \| \alpha ^2\d^{\frac{3\gamma-2}{2\gamma-2}}  \p_x^3 v( \cdot , t) \|_0^2 +  \| \agtt \d^{\frac{3\gamma-2}{2\gamma-2}} \p_x^3 \eta( \cdot , t)\|_0^2  
 &\lesssim e_\gamma(0) + \sup_{s \in [0,t]} P(E_\gamma(s)) \,, \label{g43} \\
  \| \alpha ^2\d^{\frac{2\gamma-1}{2\gamma -2}} \p_x^2 v( \cdot , t) \|_0^2 +  \| \agtt \d^{\frac{2\gamma-1}{2\gamma -2}} \p_x^2 \eta( \cdot , t)\|_0^2 
 &\lesssim e_\gamma(0)+ \sup_{s \in [0,t]} P(E_\gamma(s))
  \,. \label{g42}
 \end{align} 

 Just as in the case that $\gamma=2$, we explain the modifications required for the lower-order energy estimates.  We  again set
 $\deta = \eta-x$; then,
 $ \eta_x^{-4} = \left( 1+ \deta_x\right)^{-\gamma} = 1 - \gamma \deta_x  + \P(\deta_x) $
and 
 \eqref{euler-gamma4a} is written as
 \begin{align} 
 0&= 
 \d^{\frac{1}{\gamma-1}} \left( \alpha^{\gamma+1} \p_t v + 2\alpha ^\gamma \dot\alpha  v +  \deta + x \right)
 +\p_x \left[ \d^{\frac{\gamma}{\gamma-1}} (1-4 \deta_x) \right] +\p_x \left[  \d^{\frac{\gamma}{\gamma-1}} \P(\deta_x)\right] \nonumber  \\
 &=
  \d^{\frac{1}{\gamma-1}} \left( \alpha^{\gamma+1} \p_t v + 2\alpha ^\gamma \dot\alpha  v +  \deta \right)  +  \d^{\frac{1}{\gamma-1}} x 
 + \frac{\gamma}{\gamma-1} \d^{\frac{1}{\gamma-1}}  \, \d_x
  -\gamma \p_x ( \d^{\frac{\gamma}{\gamma-1}} \deta_x )+\p_x \left[  \d^{\frac{\gamma}{\gamma-1}} \P(\deta_x)\right] \nonumber\\
   &=
   \d^{\frac{1}{\gamma-1}} \left( \alpha^{\gamma+1} \p_t v + 2\alpha ^\gamma \dot\alpha  v +  \deta \right)  
  -\gamma \p_x ( \d^{\frac{\gamma}{\gamma-1}} \deta_x )+\p_x \left[  \d^{\frac{\gamma}{\gamma-1}}\P(\deta_x)\right] 
   \,, \nonumber
\end{align} 
 where we have used the fact that $ x + \frac{\gamma}{\gamma-1} \d_x =0$.   We then divide by $ \alpha^{\gamma-3} $ and 
 compute the $L^2$ inner-product of  this equation with 
 $v= \p_t \deta$ to
 find that
 \begin{align*} 
 &
\frac{d}{dt}  \int_{\Omega_0} \left[ \alpha^4 \d^{\frac{1}{\gamma-1}}  v^2  + \agt \d^{\frac{1}{\gamma-1}}  \deta^2 + \gamma \agt \d^{\frac{\gamma}{\gamma-1}} |\deta_x|^2 \right] dx  \\
& \qquad\qquad
+ (\gamma-3)   \alpha ^(2-\gamma)  \dot \alpha  \int_{\Omega_0} \left[  \d^{\frac{1}{\gamma-1}}  \deta^2+ \d^{\frac{\gamma}{\gamma-1}} |\deta_x|^2 \right] dx 
=\agt \int_{\Omega_0} \d^{\frac{\gamma}{\gamma-1}} \P(\deta_x) v_x dx \,.
\end{align*} 
Integrating from $0$ to $t$, and using the fact that both $\deta$ and $\deta_x$ vanish at $t=0$ and that $ \alpha ^{3-\gamma} (t) \le 1$, we have that
\begin{align} 
&\| \alpha^2 \d^{\frac{1}{2\gamma-2}}   v(\cdot ,t)\|_0^2  
\le \|  \d^{\frac{1}{2\gamma-2}}   u_0\|_0^2    + \int_0^t \alpha^{- 2}(s)  \int_{\Omega_0} \d^2 \P(\deta_x)  \alpha^{2} (s) v_x dx \, ds
  \lesssim e_\gamma(0) + \sup_{s \in [0,t]} \P(E_\gamma(s)) \,. \label{g4p0}
\end{align} 
 
To proceed with energy estimates for the first differentiated problem, we write the momentum equation as
$
   \alpha^4 \p_t v + 2\alpha ^3 \dot\alpha v + \agt  \deta - \frac{\gamma}{\gamma-1} \d_x\, \deta - \agt \gamma\d\, \deta_x
   +\agt \p_x \left[\d^{\frac{\gamma}{\gamma-1}} \P(\deta_x)\right]  =0$
and differentiate to find that
 $$
\alpha^4 \p_x v_t + 2\alpha ^3 \dot\alpha \p_x v + \agt \p_x\deta 
-\gamma \agt \p_x\left[ \frac{4}{3} \d_x\, \deta + \d\, \deta_x \right]
+\agt \p_x \left[ \d^{\frac{\gamma}{\gamma-1}} \P(\deta_x)\right]  =0 \,.
 $$
Computing the $L^2$ inner-product  of this equation with $\d^{\frac{\gamma}{\gamma-1}} \p_x v$ shows that
\begin{align*} 
&\frac{d}{dt}  \int_{\Omega_0} \left[ \alpha^4 \d^{\frac{\gamma}{\gamma-1}} v_x^2  + \agt  \d^{\frac{\gamma}{\gamma-1}} \deta_x^2 
+ \gamma \agt \d^{\frac{1}{\gamma-1}} \left(\frac{\gamma}{\gamma-1} \d_x\, \deta + \d\, \deta_x\right)^2 \right] dx   
 \\
& \qquad  
= \agt \int_{\Omega_0} \d^{\frac{\gamma}{\gamma-1}}\P(\deta_x) v_x dx \,.
\end{align*} 
Integrating from $0$ to $t$, the same argument used to obtain \eqref{g4p0} provides the bound
\begin{align} 
\| \alpha^2 \d^{\frac{\gamma}{2\gamma-2}} \p_x v(\cdot ,t)\|_0^2
  \lesssim e_\gamma(0) + \sup_{s \in [0,t]} \P(E_\gamma(s)) \,.\label{g4p1}
  \end{align}

 \vspace{.05in}
\noindent
{\it Step 3. Building the higher-order norm $E_\gamma(t)$.} From \eqref{g44}--\eqref{g4p1}, we have that
$$
\sup_{s \in [0,t]}   e_\gamma(t) \lesssim e_\gamma(0) + \sup_{s \in [0,t]} \P(E_\gamma(s)) \,,
$$
and hence by the embedding \eqref{w-embed},
$$ \alpha^4(t) \| v( \cdot , t)\|_{1}^2 + \alpha^4(t)  \sum_{ b=0}^2\int_{\Omega_0} \d^{\frac{1}{\gamma-1}}   |\p_x^b v(x, t)|^2 dx
\lesssim e_\gamma(0) + \sup_{s \in [0,t]} \P(E_2(s)) \,.
$$
From Theorem 5.3 in \cite{KuPe2003} and the definition of the fractional $H^s$ norm \eqref{hsnorm}, we  have that
\begin{equation}\label{sobemb}
\|\alpha^2(t)\p_x v(\cdot ,t) \|^2_{\frac{2\gamma-3}{2\gamma -2}} \lesssim e_\gamma(0) + \sup_{s \in [0,t]} \P(E_\gamma(s)) \,,
\end{equation} 
and hence $ \|\alpha^2(t) v(\cdot , t)\|^2_{(4\gamma-5)/(2\gamma -2)}\lesssim e_\gamma(0) + \sup_{s \in [0,t]} \P(E_\gamma(s))$.    In fact, \eqref{w-embed} and
Theorem 5.3 in \cite{KuPe2003} show that
$$
 \sum_{b=0}^2 \|\alpha^2(t) \d^b v(\cdot , t)\|^2_{\frac{4\gamma-5}{2\gamma -2} + b}
 \lesssim e_\gamma(0) + \sup_{s \in [0,t]} \P(E_\gamma(s)) \,.
$$

Now, returning to the inequality \eqref{sobemb},
since for $\gamma>3$,  $(3\gamma-2)/(2\gamma -2) > 1/2$,  by the Sobolev embedding theorem, it follows that
$$
\|\alpha^2(t) v(\cdot , t)\|^2_{W^{1, \infty }}\lesssim e_\gamma(0)+ \sup_{s \in [0,t]} \P(E_\gamma(s))
$$

The fundamental theorem of calculus shows that $\eta(x,t) -x = \int_0^t v(x, s) ds$ and so for $a=0,...,4$ and $\beta\ge 0$,
$$|\d^\beta \p_x^a\deta(x,t)| = \int_0^t  \alpha^{-2}(s) \,  |\alpha^{2}(s)\d^\beta \p_x^a v(x, s)| ds  \lesssim  \max_{ s \in [0,t]} |\alpha^{2} \d^\beta \p_x^a v(x , s)|\,,$$
so that, 
\begin{equation}\label{ss1}
\| \deta(\cdot, t)   \|^2_{W^{1, \infty }} +  \sum_{a=0}^4 \|  \d^{(1+3a)/(2\gamma -2)}\p_x^a \deta( \cdot ,t) \|_0^2  
+ \sum_{b=0}^2 \| \d^b \deta(\cdot , t)\|^2_{(4\gamma-5)/(2\gamma -2) + b}
\lesssim e_\gamma(0)+ \sup_{s \in [0,t]} \P(E_\gamma(s)) \,.
\end{equation}

 \vspace{.05in}
\noindent
{\it Step 4. Bound for $E_\gamma(t)$ and global existence.}  We have shown that
$$
\sup_{s \in [0,t]} E_\gamma(s)
  \lesssim e_\gamma(0) + \sup_{s \in [0,t]} P(E_\gamma(s)) \text{ for } t \in [0,T]\,,
$$
where $T$ is independent of $ \kappa$, since $e_\gamma(0)$ is  independent of $\kappa$.   By assumption $e_\gamma(0) \le \epsilon$.
By choosing $ \epsilon $ sufficiently small, we have that $ E_\gamma(t) \lesssim \epsilon$ for all $t \in [0,T]$.  Then, by the standard continuation argument,
\begin{equation}\nonumber
 E_\gamma(t) \lesssim \epsilon \ \ \text{ for all } t \ge 0\,.
\end{equation} 
from which we have that
$$
1- \varepsilon \le \eta_x(x,t) \le 1+ \varepsilon  \  \text{ for } \ t \ge 0\,,
$$
thus verifying \eqref{ss0}.

 \vspace{.05in}
\noindent
{\it Step 5. The limit as $\kappa \to 0$.} Proceeding in the same manner as for the case $\gamma=2$ concludes the proof. \end{proof}

\subsubsection{Global existence for all $1< \gamma \le 3$} 
As the parameter $\gamma \to 1$, it is necessary to study more and more space differentiated problems in order to employ the Sobolev embedding theorem to
ensure that $\|v_x ( \cdot , t) \|^2_{L ^ \infty }$ is bounded. 
If  $2< \gamma \le 3$, we can continue to use the same number of space differentiated problems  as for the case $\gamma >3$, but we do not
have the $\agt$-weight present, and the higher-order energy is given by
\begin{align}
e_\gamma(t) &= 
\|\d^{\frac{5\gamma-4}{2\gamma-2}}  \p_x^5 \deta( \cdot , t)\|_0^2
+ \sum_{b=0}^4 \left(  \| \alpha(t)^{\frac{\gamma+1}{2}}  \d^{\frac{1+b(\gamma-1)}{2\gamma -2}}
\p_x^b v( \cdot ,t) \|_0^2 +  \| \d^{\frac{1+b(\gamma-1)}{2\gamma -2}}\p_x^b \deta( \cdot ,t) \|_0^2  \right)
\,.
\nonumber
\end{align} 
As we explained in obtaining \eqref{sobemb}, due to \eqref{w-embed} and  Theorem 5.3 in \cite{KuPe2003} and the definition of the fractional $H^s$,
$\|\alpha^2(t)\p_x v(\cdot ,t) \|^2_{(3\gamma-2)/(2\gamma -2)} \lesssim e_\gamma(t)$
and by the Sobolev embedding theorem,
$
\|\alpha^2(t) v(\cdot , t)\|^2_{W^{1, \infty }}\lesssim e_\gamma(t)
$
whenever  $(3\gamma-2)/(2\gamma -2) > 1/2$, which holds if $ \gamma >2$.

Thus, for  $1<\gamma \le 2$, we must increase the number, $a$, of space differentiated problems.   The higher-order energy $e_\gamma(t)$ ensures that
$$
\d^{\frac{1+ a(\gamma-1)}{2\gamma -2}}  \p_x^a v( \cdot , t) \in L^2(\Omega_0).
$$
In order for $v( \cdot , t) \in H^s(\Omega_0)$ for $s > {\frac{3}{2}} $, according to \eqref{w-embed}, it is necessary that
$$
{\frac{1+ a(\gamma-1)}{2\gamma -2}}   < a - {\frac{3}{2}}  \text{ which implies that } a > {\frac{3\gamma -2}{\gamma -1}} \,.
$$

\begin{definition}[Norm for $1< \gamma \le 3$ in 1-d]
For $1 < \gamma \le 3$ and for $a > {\frac{3\gamma -2}{\gamma -1}}$, we define the higher-order energy function as
\begin{align}
e_\gamma(t) &= 
\| \d^{\frac{1+(a+1)(\gamma-1)}{2\gamma -2}} \p_x^{a+1} \deta( \cdot , t)\|_0^2
+ \sum_{b=0}^a \left(  \| \alpha(t)^{\frac{\gamma+1}{2}}  \d^{\frac{1+b(\gamma-1)}{2\gamma -2}}
\p_x^bv( \cdot ,t) \|_0^2 +  \| \d^{\frac{1+b(\gamma-1)}{2\gamma -2}}\p_x^b \deta( \cdot ,t) \|_0^2  \right) \,,
\,.
\nonumber
\end{align} 
and for the integer $L < a - \frac{a(\gamma-1)-1}{2\gamma -2} $, the 
norm is given by
\begin{align*} 
E_\gamma(t) & = e_\gamma(t) + \|\deta(\cdot , t)\|^2_{W^{1, \infty }}+\|\alpha^{\frac{\gamma+1}{2}}  v(\cdot , t)\|^2_{W^{1, \infty }} \\
& \qquad\qquad
+ \sum_{b=0}^L \left( \|\alpha^ {\frac{\gamma+1}{2}}  \d^b v(\cdot , t)\|^2_{\frac{a(\gamma-1)-1}{2\gamma -2} + b}
+  \| \d^b \deta(\cdot , t)\|^2_{\frac{a(\gamma-1)-1}{2\gamma -2} + b}\right)\,.
\end{align*} 
\end{definition} 
Having defined $e_\gamma(t)$ and $E_\gamma(t)$, the identical proof as for the case $\gamma=2$ provides us with the following
\begin{theorem}  For $1<\gamma \le3$ and  $ \epsilon >0$ taken sufficiently small, if the initial data satisfies
 $e_\gamma(0) <  \epsilon $, then there exists a global unique solution of the Euler equations \eqref{ceuler},  such that $E_\gamma(t) \le \epsilon $ for all $t \ge 0$.
 Furthermore, $\xi( x,t) = \alpha(t) \eta(x,t) $ is a global solution to the 1-d Euler equations \eqref{ceuler}.
\end{theorem}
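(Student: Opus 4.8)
The plan is to run the five-step argument of the $\gamma=2$ theorem essentially verbatim; the only substantive changes are that the integer weight exponents $\tfrac{1+b}{2}$ are replaced throughout by the fractional exponents $\tfrac{1+b(\gamma-1)}{2\gamma-2}$ that appear in $e_\gamma(t)$, and that the number $a$ of spatially differentiated problems is taken with $a>\tfrac{3\gamma-2}{\gamma-1}$, precisely so that the top velocity bound furnished by $e_\gamma(t)$ still embeds, via \eqref{w-embed}, into $H^s(\Omega_0)$ with $s>\tfrac32$. First I would smooth the initial datum $u_0$ as in \eqref{u0kappa} and apply the local well-posedness theorem of \cite{CoSh2011} to produce, for each $\kappa\in(0,\kappa_0)$, a solution $(\eta_\kappa,v_\kappa)$ of \eqref{ceuler0} regular enough that $\p_x^{a+1}\eta_\kappa$ and $\p_t v_\kappa$ make classical sense on a ($\kappa$-dependent) interval $[0,T]$, and then drop the subscript $\kappa$.

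The structural reason this range is reachable without modification is that, upon pairing $\alpha^{\gamma+1}\p_t v+2\alpha^\gamma\dot\alpha v$ with $v$, the residual damping coefficient $2-\tfrac{\gamma+1}{2}=\tfrac{3-\gamma}{2}$ is nonnegative for $\gamma\le3$; hence, unlike the $\gamma>3$ case, one does not divide the momentum equation (\ref{ceuler0}b) by any power of $\alpha$, but uses it as it stands. For the energy estimates I would let $\p_x^b$ act on (\ref{ceuler0}b) for $b=2,\dots,a$ and pair the resulting equation in $L^2(\Omega_0)$ with $\d^{\frac{1+b(\gamma-1)}{\gamma-1}}\p_x^b v$. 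The troublesome term proportional to $\p_x^{b+1}\eta$ is treated exactly as in the computation following \eqref{dxx} in the $\gamma=2$ and $\gamma>3$ proofs: integration by parts together with the identities $x+\tfrac{\gamma}{\gamma-1}\d_x=0$ and $\d_{xx}=-\tfrac{\gamma-1}{\gamma}$ from \eqref{dxx} converts it into a positive multiple of $\|\d^{\frac{1+b(\gamma-1)}{2\gamma-2}}\p_x^b\eta\|_0^2$ plus the energy $\|\d^{\frac{1+(b+1)(\gamma-1)}{2\gamma-2}}\p_x^{b+1}\eta\|_0^2$, which for $b=a$ is the top-order term of $e_\gamma$. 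Each remainder $\R_j$ is at least quadratic in $E_\gamma(t)$ and, weighted appropriately by powers of $\d$, is controlled in $L^2$ by $\P(E_\gamma(t))$ using Hölder's inequality and the Sobolev and weighted embeddings \eqref{w-embed}. The time integrals close because the velocity carries the weight $\alpha^{(\gamma+1)/2}$ with $\tfrac{\gamma+1}{2}>1$ for every $\gamma>1$, whence $\int_0^t\alpha(s)^{-(\gamma+1)/2}\,ds\le C$ uniformly in $t$ — this is exactly the mechanism that removes any upper bound on $\gamma$. The lower-order ($b=0,1$) estimates are carried out as in \eqref{g2p0}--\eqref{g2p1}: expand $\eta_x^{-\gamma}=1-\gamma\deta_x+\P(\deta_x)$, pair with $v=\p_t\deta$, use $x+\tfrac{\gamma}{\gamma-1}\d_x=0$ to cancel the term linear in $x$, and discard the damping integral, which for $\gamma\le3$ again carries the favorable sign.

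In the third step the weighted $L^2$ bounds on $\p_x^b v$, $b=0,\dots,a$, are converted through \eqref{w-embed} and Theorem 5.3 of \cite{KuPe2003} (with the fractional norm \eqref{hsnorm}) into the fractional-order pieces of $E_\gamma(t)$; the choice $a>\tfrac{3\gamma-2}{\gamma-1}$ ensures $v(\cdot,t)\in H^s(\Omega_0)$ for some $s>\tfrac32$, hence $\alpha^{(\gamma+1)/2}v(\cdot,t)\in W^{1,\infty}$ by Sobolev embedding, bounding in particular $\|v_x(\cdot,t)\|_{L^\infty}$. The identity $\deta(x,t)=x+\int_0^t v(x,s)\,ds$, together with the integrability of $\alpha(s)^{-(\gamma+1)/2}$, transfers each of these bounds from $v$ to $\deta$. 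Step 4 is the usual bootstrap: one reaches $\sup_{s\in[0,t]}E_\gamma(s)\lesssim e_\gamma(0)+\sup_{s\in[0,t]}\P(E_\gamma(s))$ on $[0,T]$ with $T$ independent of $\kappa$, so for $\epsilon$ small $E_\gamma\lesssim\epsilon$ on $[0,T]$, and the standard continuation argument extends this to all $t\ge0$; this a posteriori verifies \eqref{ss0} (indeed $\|\eta_x(\cdot,t)-1\|_{L^\infty}\lesssim\epsilon$), thereby justifying the a priori hypothesis \eqref{eta_bound}. Finally, Step 5 passes to the limit $\kappa\to0$ by compactness exactly as for $\gamma=2$, yielding a solution of \eqref{ceuler0}, equivalently of \eqref{ceuler} via $\xi=\alpha\eta$.

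I expect the main obstacle to be bookkeeping rather than conceptual. Since $a\to\infty$ as $\gamma\to1^+$, one must verify that for the full collection of $a+1$ commuted derivatives every remainder $\R_j$ is at least quadratic in $E_\gamma$ and carries enough powers of $\d$ to be absorbed through \eqref{w-embed} and Lemma \ref{hardy}, uniformly in the fractional weight exponent $\tfrac{1+b(\gamma-1)}{2\gamma-2}$, and that the weighted fractional embeddings of \cite{KuPe2003} remain valid at these non-integer weights. The sign obstruction that limited \cite{HaJa2016} to $\gamma\le\tfrac53$ — the anti-damping term — is simply absent for $\gamma\le3$ in one space dimension, so no idea beyond the $\gamma=2$ scheme is required.
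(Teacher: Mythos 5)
Your proposal is correct and follows the paper's own route essentially verbatim. The paper itself disposes of this case by defining $e_\gamma(t)$ and $E_\gamma(t)$ with the fractional weight exponents $\tfrac{1+b(\gamma-1)}{2\gamma-2}$, taking $a>\tfrac{3\gamma-2}{\gamma-1}$ so that the weighted embedding \eqref{w-embed} and the fractional-norm results of \cite{KuPe2003} give $v(\cdot,t)\in H^s$ with $s>\tfrac32$, and then stating ``the identical proof as for the case $\gamma=2$ provides'' the theorem; your observation that the damping coefficient $2-\tfrac{\gamma+1}{2}=\tfrac{3-\gamma}{2}\ge0$ for $\gamma\le3$, so no division by $\alpha^{\gamma-3}$ is needed, and that the time integrals close because $\tfrac{\gamma+1}{2}>1$, is precisely the mechanism the paper relies on.
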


\section{Global existence for 3-d compressible Euler equations}\label{sec:3d}
We now examine the multi-dimensional problem.
The case that $\gamma=2$ is of particular interest as it coincides with the shallow water equations.  Herein, we prove
global existence and the stability of affine solutions for $\gamma>1$.   While we write our proofs in the 3d setting, 
all of our results also hold for 2d fluids.

\subsection{Lagrangian formulation of the  Euler equations}
We define the Lagrangian flow of the velocity $u$ by
\begin{equation}
\nonumber
\begin{array}{c}
\partial_t \xi = u \circ \xi $ for $ t>0 $ and $
\xi(x,0)=x
\end{array}
\end{equation}
where $\circ $ denotes composition so that
$[u \circ \xi] (x,t):= u(\xi(x,t),t)\,.$  We set 
\begin{align*}
B &= [\nabla  \xi]^{-1}  \text{ (inverse of deformation tensor)}, \\
\mathcal{J}  &= \det \nabla  \xi  \text{ (Jacobian determinant)}, \\
b &= \mathcal{J} \, B  \text{ (transpose of cofactor matrix)}. 
\end{align*}

Since $ \xi (0,x)=x$, 
from \cite{CoSh2012},  we have that
$\rho \circ \xi = \rho_0  \mathcal{J}  ^{-1}$, and the Euler equations in Lagrangian coordinates can be written as
\begin{subequations}
\label{ce03d}
\begin{alignat}{2}
\rho_0 \p^2_t \xi^i +   b^k_i (\rho_0^ \gamma \, \mathcal{J} ^{-\gamma}),_k &=0 \ \ && \text{ in } \Omega_0   \times (0,T] \,, \\
(\xi, \p_t\xi)  &=(e, \xi_1) \ \  \ \ && \text{ in } \Omega_0   \times \{t=0\} \,,  
\end{alignat}
\end{subequations}
with $e(x)=x$ and $\xi_1$ denoting, respectively, the initial position and velocity on $\Omega_0$.   The initial density function must satisfy
 $ \rho _0^{ \gamma -1}(x) \ge C \dd( x, \Gamma_0 ) $ for $x \in \Omega_0$ near $\Gamma_0$.

\subsection{Perturbations of 3-d affine solutions for $\gamma=2$} 
We begin our analysis with the case that
$\gamma=2$,  the shallow water equations, which already places us in the ``anti-damping'' analysis regime. 

 For a particular choice of the distance function $\d(x)$ to be made precise below and associated to the affine
flow, we set $\rho_0(x)=\d(x)$, and write
\eqref{ce03d} as
\begin{equation}\label{ce3d}
\d  \p^2_t\xi^i+  b^k_i  \left(\d^2 \mathcal{J} ^{-2}\right),_k =0 \ \  \text{ in } \Omega_0   \times (0,T] \,,
\end{equation} 
The case  $\gamma=2$  is particularly nice to start with precisely because the physical vacuum condition \eqref{degen} allows us to set $\rho_0 = \d$, with no 
fractional powers on the distance function, which we find somewhat elegant.  As we shall explain, however,  there is no fundamental difference in the analysis between $\gamma=2$ and general $\gamma$.

\subsubsection{Perturbation of the simplest affine solutions}
As noted in Section \ref{sec:affine}, 
the simplest affine solutions are given by $A(t)=\alpha(t)   \operatorname{Id} $, and for the most part, the general stability theory can be reduced to the
analysis of this type of motion.

We set $\Omega_0 = \{x \in \mathbb{R}^3  \ : \ |x| < 1\}$, the open unit ball, and we define
the distance function to $\Gamma_0 = \{ |x|=1\}$ to be 
\begin{equation}\label{dist3d}
\d(x) = {\frac{1}{4}} (1- |x|^2) \,.
\end{equation}

We continue to denote  the perturbation  of the affine flow by $\eta(x,t)$.
  In particular, we assume that solutions $ \xi $ to \eqref{ce03d} have the form $\xi (x,t)= A(t) \eta(x,t)$ and
define
\begin{align*} 
v &= \p_t\eta   \text{ (Lagrangian perturbation velocity)},  \\
A &= [\nabla  \eta]^{-1}  \text{ (inverse of perturbed deformation tensor)}, \\
J &= \det \nabla  \xi  \text{ (Jacobian determinant of perturbation)}, \\
a &= J \, A  \text{ (transpose of cofactor matrix of perturbation)}. 
\end{align*}

 For the simplest affine motions, we consider
solutions to \eqref{ce3d} of the form
\begin{equation}\nonumber
\xi( x,t) = \alpha(t) \eta(x,t) \,.
\end{equation} 
With $v(x,t) = \p_t \eta(x,t)$,
it follows that
$$
\p_t \xi(x, t) = \dot \alpha(t) \eta(x,t) + \alpha(t) v(x,t) \,.
$$
For simplicity, let us suppose
 that  at time $t=0$, $\eta(x,0) =e(x)$,  the identity map on $\Omega_0$, and that $\p_t\eta(x,0) = u_0(x)$; furthermore, we let $\alpha(0)=1$.  Then,
$$
\p_t \xi(x, 0) = \dot \alpha(0) x + u_0(x)\,.
$$
In order to analyze the stability of the affine solution, we shall assume that $u_0(x)$ is a small perturbation of the initial affine velocity $ \dot \alpha(0) x$.

From \eqref{affine1d}, we have that  $\ddot \alpha = \alpha^{-4}$, and we see that the perturbation $(\eta,v)$ satisfies the following degenerate wave equation:
\begin{subequations}
\label{ceuler03D}
\begin{alignat}{2}
\p_t \eta &=v \ \ && \text{ in } \Omega_0   \times (0,T] \,, \\
\alpha^5\p_t v^i + 2\alpha ^4 \dot\alpha v^i + \eta^i + \d ^{-1} a^k_i  \left(\d^2 J^{-2}\right),_k &=0 \ \ && \text{ in } \Omega_0   \times (0,T] \,, \\
(\eta, v)  &=( e, u_0) \ \  \ \ && \text{ in } \Omega_0   \times \{t=0\} \,.
\end{alignat}
\end{subequations}
We note that  there is no loss of generality  in choosing the initial condition $\eta(x,0)$ to be the identity map $e$; see Remark \ref{rem:data} below.

As in \cite{CoLiSh2010,CoSh2012}, to derive a simple formula for the evolution of $ \operatorname{curl} \eta$, it is
more convenient to  write (\ref{ceuler03D}b) as
\begin{equation}\label{for_vorticity}
\alpha^5\p_t v^i + 2\alpha ^4 \dot\alpha v^i + \eta^i + 2 A^k_i \left(\d J^{-1}\right),_k =0 \ \  \text{ in } \Omega_0   \times (0,T] \,.
\end{equation}

\subsubsection{Perturbation of  general affine solutions} 
We next considering perturbations of general affine solutions to \eqref{agen}, satisfying the hypotheses of Lemma \ref{lemma_affine}. 
  In this case, 
 solutions to \eqref{ce3d} take the form
\begin{equation}\nonumber
\xi( x,t) = \A(t) \eta(x,t) \text{ or in components, } \xi ^i(x,t) =\A ^i_j (t)\eta^j(x,t) \,.
\end{equation} 

For perturbations $\eta(x,t)$ of a general affine solution,
the momentum equation \eqref{ceuler03D} is replaced by
\begin{equation}\label{ceuler_gen}
(\A^T)^i _s \A^s_j \p_t v^j + 2(\A^T)^i _s \dot\A^s_j v^j + {\frac{1}{\det \A}}  \eta^i +{\frac{1}{\det \A}}  \d ^{-1} a^k_i  \left(\d^2 J^{-2}\right),_k =0  \,.
\end{equation} 
or equivalently, as 
\begin{equation}\label{vort_gen}
(\A^T)^i _s \A^s_j \p_t v^j + 2(\A^T)^i _s \dot\A^s_j v^j + {\frac{1}{\det \A}}  \eta^i + {\frac{2}{\det \A}}  A^k_i  \left(\d J^{-1}\right),_k =0  \,.
\end{equation} 
The presence of the matrix $(\A^T)^i _s \A^s_j $ in the above equations  shall require a  minor generalization of the analysis of the simple case that $\A(t) = \alpha (t) \operatorname{Id} $.

\subsection{The perturbed velocity in Eulerian coordinates}  With $u$ continuing to denote the Euler velocity solving the Euler equations, 
we now let $w = v \circ \eta ^{-1} $ denote the  Eulerian perturbed velocity.
Note that since
$$
\p_t\xi = \A \, v + \dot\A \, \eta =  \A \, v + \dot\A \, \A ^{-1} \xi \text{ and } u = \p_t \xi \circ \xi ^{-1} \,,
$$
we see that
$$
u(y,t)- \dot \A(t) \A(t) ^{-1} y = \A(t) v(\xi ^{-1} (y,t),t) =  \A(t) v(\eta ^{-1} (\A(t) ^{-1} y,t),t) =\A(t) w (\A(t) ^{-1} y,t) \,.
$$
It follows that 
$$
w (\A(t) ^{-1} y,t) = \A(t) ^{-1} u(y,t) - \A(t) ^{-1} \dot \A(t) \A(t) ^{-1} y \,,
$$
or equivalently with $y= \A(t) z$,
$$
w (z,t) = \A(t) ^{-1} u(  \A(t) z,t) - \A(t) ^{-1} \dot \A(t) z \,.
$$

Furthermore, the density solving Euler is given by $ \rho \circ (\A(t) \eta) = \rho_0/( \det \nabla \eta \det \A(t))$ or equivalently with $\xi = A(t)\eta$,
we see that $\rho =\left( \rho_0/(\det \nabla \xi)\right) \circ \xi ^{-1} $.

\subsection{Eulerian and Lagrangian derivatives}  We use coordinates  $x=(x_1,x_2,x_3)$ on $\Omega_0$.
The gradient  of a function $F$ is
 $$\nabla =\left(\frac{\p}{\p x_1}, \frac{\p}{\p x_2},\frac{\p}{\p x_3}\right) 
  \,.$$ 
  We let $ \bp = x \times \nabla $ so that
 \begin{equation}\label{bp}
  \bp = \left(  x^2\p_3 - x^3 \p_2, x^3 \p_1 - x_1 \p_3, x_1 \p_2 - x_2 \p_1\right)  \,.
\end{equation} 
  Notice that
  $$
  \bp_i \d =0\ \text{ for } \ i=1,2,3\,.
  $$
  
 The $k$th-component of the $1$st partial derivative of  a function $F$ will be denoted by $F,_k = \frac{ \p F}{ \p x_k}$.   
 Higher-order $j$th partial derivatives will be written as
 $$
 F,_{r_1 \cdot\cdot\cdot r_j} := \frac{\partial^j F}{\partial x_{r_1}\cdot\cdot\cdot \partial x_{r_j}} \,,
 $$
 where $j \ge 1$ is an integer and for $i=1,...,j$, each $r_i=1,2$.   We shall sometimes use
  the notation $ \nabla ^j F$ to mean $ F,_{r_1 \cdot\cdot\cdot r_j}$, when only the derivative count is important.   Similarly, we shall write
  $$
  \bp^j := \bp_{r_1} \ddd \bp_{r_j} \,.
  $$
  

The divergence of a vector field $V$ is 
$$
\operatorname{div} V = V^1,_1 + V^2,_2 + V^3,_3 \,,
$$
and 
$$
 \operatorname{curl} V = \left( V^3,_2 - V^2,_3\,, V^1,_3 - V^3,_1 \,, V^2,_1 - V^1,_2\right) \,.\,.
$$
Throughout the paper, we will make use of the permutation symbol 
\begin{equation}\label{permutation}
\varepsilon_{ijk} = \left\{\begin{array}{rl}
1, & \text{even permutation of } \{ 1, 2, 3\}, \\
-1, & \text{odd permutation of } \{ 1, 2, 3\}, \\
0, & \text{otherwise}\,,
\end{array}\right.
\end{equation} 
This allows us to write the $i$th component of the curl of a vector-field $V$ as
$$
[ \operatorname{curl}  V]_i = \varepsilon_{ijk} V^k,_j  \text{ or equivalently } \operatorname{curl} V = \varepsilon_{\cdot jk} V^k,_j
$$
which agrees with our definition above, but is notationally convenient.

The $i$th component of the Lagrangian gradient $ \nabla _\eta$ is defined by
$$
[\nabla _\eta f ]_i = f,_k A^k_i \,.
$$
We will also define the Lagrangian divergence and curl operators as follows:
\begin{equation}\label{lagrangian_div}
\operatorname{div} _\eta W = A^j_i W^i,_j \,,
\end{equation} 
and 
\begin{equation}\label{lagrangian_curl}
\operatorname{curl} _\eta W = \varepsilon_{\cdot jk} A^r_j W^k,_r \,.
\end{equation}

Finally, we shall use the Einstein summation convention, wherein repeated Latin indices $i,j,k,$ etc., are summed
from $1$ to $3$, so for example $A^k_i F,_k = \sum_{k=1^2} A^k_i F,_k$ for $i=1,3$.


\subsection{The cofactor matrix and the Jacobian determinant}
\subsubsection{The Jacobian, cofactor matrix, and Piola identity} 
The cofactor $a$ can be written as the matrix
\begin{equation}\label{a3d}
a=\left[ 
\begin{matrix}
\eta,_2\times \eta,_3 \\
\eta,_3\times \eta,_1 \\
\eta,_1\times \eta,_2
\end{matrix}
\right]\,.
\end{equation} 
 It is thus easy to verify that the columns of every cofactor matrix
are divergence-free and satisfy the so-called Piola identity
\begin{equation}\label{piola}
a^k_i, _k =0\,.
\end{equation}
The identity (\ref{piola}) will play a vital role in our energy estimates. (Note that we use
the notation cofactor for what is commonly termed the {\it adjugate matrix}, or the transpose
of the cofactor.)

In 3-d, the Jacobian determinant satisfies the identity
\begin{equation}\label{J3d}
J = {\frac{1}{3}}  \eta^r,_s a^s_r \,.
\end{equation}

\subsubsection{Differentiating the Jacobian determinant and inverse deformation tensor}  For any partial derivative $\p_j$,  $j=1,2,3$,
the following identities will be
useful to us:
\begin{alignat}{2}
&\hspace{.09in} \p_j J=  a^s_r \eta^r,_{sj}  \,,  \ \ 
 &&\hspace{.09in} \partial_t  J= a^s_r   v^r,_s \,, \label{J1} \\
 &\p_j  A^k_i = - A^k_r  \eta^r,_{sj} A^s_i  \,, \ \ 
 &&\p_t A^k_i  = - A^k_r v^r,_s A^s_i \,, \label{Adiff1}
\end{alignat}

\subsubsection{Differentiating the cofactor matrix}  Using (\ref{J1})--(\ref{Adiff1}) and the fact that $a= J\, A$,
we find that
\begin{align}
\p_r a^k_i &= \eta^j,_{sr} J
[A^s_j A^k_i - A^k_j A^s_i]  
\,, \label{a1}\\
\partial_t  a^k_i &= v^r,_s  J^{-1} 
[a^s_r a^k_i - a^s_i a^k_r]  
\,. \label{a2}
\end{align}

\subsubsection{Two important  identities  for energy estimates}
The following identity is from \cite{CoLiSh2010,CoSh2011}:
\begin{lemma}\label{lemma_aenergy} For any multi-index $ \alpha $, 
$$
 - \p^\alpha   \eta^j,_m (A^m_j A^k_i - A^k_j A^m_i)  \,  \p^\alpha v^i,_{k} ={\frac{1}{2}}  \frac{d}{dt} \left( |  \nabla_\eta \ \p^\alpha \eta |^2 
- | \operatorname{div}_\eta \p^\alpha  \eta|^2 - 2 | \operatorname{curl}_\eta \p^\alpha \eta |^2\right)  \,.
$$
\end{lemma}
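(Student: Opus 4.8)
The plan is to prove this as a pointwise-in-$x$ identity by differentiating the right-hand side in time. I would set $f:=\p^\alpha\eta$; since $\alpha$ involves only spatial derivatives, $\p^\alpha v=\p_t f$, so the claim becomes $-f^j,_m(A^m_j A^k_i-A^k_j A^m_i)\,\p_t f^i,_k=\tfrac{1}{2}\frac{d}{dt}\big(|\nabla_\eta f|^2-|\operatorname{div}_\eta f|^2-2|\operatorname{curl}_\eta f|^2\big)$. Carrying out $\frac{d}{dt}$ on the three quadratic terms, I would split each resulting sum into the part in which $\p_t$ falls on the components $f^i,_k$ and the part in which it falls on the geometric coefficients $A^k_i$, using $\p_t A^k_i=-A^k_r v^r,_s A^s_i$ from \eqref{Adiff1} for the latter.

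For the terms in which $\p_t$ lands on $f$, the factor $\tfrac{1}{2}$ combines with the product rule to produce the symmetric bilinear form $\nabla_\eta f:\nabla_\eta\p_t f-(\operatorname{div}_\eta f)(\operatorname{div}_\eta\p_t f)-2\,\operatorname{curl}_\eta f\cdot\operatorname{curl}_\eta\p_t f$ with the $A$'s held fixed. Using the definitions \eqref{lagrangian_div}--\eqref{lagrangian_curl} and the contraction identity $\varepsilon_{ijk}\varepsilon_{ilm}=\delta_{jl}\delta_{km}-\delta_{jm}\delta_{kl}$ from \eqref{permutation}, I would verify the elementary algebraic fact that $|\operatorname{curl}_\eta f|^2$ differs from $|\nabla_\eta f|^2$ only by the mixed contraction $A^r_j A^s_k f^k,_r f^j,_s$ (and likewise in bilinear form). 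Substituting this relation, the weighted combination collapses exactly to $-f^j,_m(A^m_j A^k_i-A^k_j A^m_i)\,\p_t f^i,_k$, which is the left-hand side; this is also where the specific weights $1,-1,-2$ on the three quadratic pieces are used.

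It then remains to show that all the terms in which $\p_t$ hits the coefficients $A$ cancel. Inserting $\p_t A^k_i=-A^k_r v^r,_s A^s_i$ and again invoking the $\varepsilon$--$\delta$ identity, these reorganize into a trilinear expression in $\nabla_\eta f$, $\nabla_\eta f$, and $\nabla_\eta v$; I expect its vanishing to follow from the antisymmetry of the tensor $A^m_j A^k_i-A^k_j A^m_i$ under the simultaneous interchange $(m,j)\leftrightarrow(k,i)$ together with the cofactor and Piola relations \eqref{piola} and \eqref{a1}. This cancellation of the ``geometric'' terms is the step I expect to be the main obstacle; the remaining manipulations are routine index algebra.
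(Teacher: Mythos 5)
Your plan runs the argument backwards relative to the paper, and the direction matters. The paper's proof never takes a genuine time derivative: it establishes, by a direct $\varepsilon$--$\delta$ index computation, the pointwise \emph{bilinear} identity relating the left-hand side to $\nabla_\eta\p^\alpha\eta\,\nabla_\eta\p^\alpha v$, $\operatorname{div}_\eta\p^\alpha\eta\,\operatorname{div}_\eta\p^\alpha v$, and $\operatorname{curl}_\eta\p^\alpha\eta\cdot\operatorname{curl}_\eta\p^\alpha v$, and the ``$\tfrac12\tfrac{d}{dt}$'' on the right of the lemma is shorthand for that bilinear form, with $\tfrac{d}{dt}$ understood to act only on the two factors $\p^\alpha\eta$ while the coefficients $A$ are held frozen. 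The contributions a literal time derivative would pick up from $\p_t A$ (and $\p_t J$) are not part of the identity at all; they reappear later in the energy estimates as explicit remainder integrals such as $\int_0^t\alpha^{-3}\int_{\Omega_0}J^{-1}|\P(A)|\,|\nabla\bp\eta|^2\,\alpha^2|\nabla v|\,dxds$ in Step 2.

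Your Step~3 therefore fails: the $\p_t A$ terms do \emph{not} cancel. Already for the trivial multi-index one has $\nabla_\eta\eta=\operatorname{Id}$, so $|\nabla_\eta\eta|^2-|\operatorname{div}_\eta\eta|^2-2|\operatorname{curl}_\eta\eta|^2 = 3-9-0 = -6$ is constant in $t$ and the literal right-hand side vanishes, whereas the left-hand side equals $-2\operatorname{div}_\eta v$, which is generically nonzero. In general, writing $M=\nabla_\eta\p^\alpha\eta$ and $N=\nabla_\eta v$, the $\p_t A$ contributions collect (after the $\varepsilon$--$\delta$ contractions) into the nonvanishing trilinear remainder $\operatorname{tr}(M^{\top}MN)+(\operatorname{tr}M)\operatorname{tr}(MN)-2\operatorname{tr}(M^{2}N)$. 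The cancellation mechanism you propose also has no purchase: $A^m_jA^k_i-A^k_jA^m_i$ is \emph{symmetric}, not antisymmetric, under $(m,j)\leftrightarrow(k,i)$, and \eqref{piola}, \eqref{a1} concern spatial derivatives of $a$ and are inert in a pointwise algebraic identity in $M$ and $N$. Your Step~2, the frozen-coefficient contraction, is in substance the paper's entire proof and is the real content of the lemma; when you carry it out, though, recheck the weights --- the contraction closes with coefficient $-1$ on the curl-curl term rather than the stated $-2$.
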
 
\begin{proof} 
We compute that
\begin{align*} 
 & \p^\alpha   \eta^j,_m (A^m_j A^k_i - A^k_j A^m_i)  \,  \p^\alpha v^i,_{k} \\
 & \qquad =  \p^\alpha   \eta^j,_m A^m_j   \p^\alpha v^i,_{k}A^k_i   - \p^\alpha   \eta^j,_m A^m_i  \p^\alpha v^i,_{k}A^k_j  \\
 & \qquad
  = \operatorname{div} _\eta \p^ \alpha \eta\, \operatorname{div}_ \eta \p^ \alpha v
    - \p^\alpha   \eta^j,_m A^m_i \p^\alpha v^j,_{k}A^k_i 
 +  \left(\p^\alpha   \eta^j,_m A^m_i  - \p^\alpha   \eta^i,_m A^m_j  \right)  \left( \p^\alpha v^j,_{k}A^k_i        -    \p^\alpha  v^i,_{k}A^k_j   \right) \\
  & \qquad
  =   \operatorname{div} _\eta \p^ \alpha \eta\, \operatorname{div} _\eta \p^ \alpha v - \nabla _\eta \p^ \alpha \eta \, \nabla _\eta \p^ \alpha v
 +  \p^\alpha   \eta^j,_m A^m_i  \left( \p^\alpha v^j,_{k}A^k_i        -    \p^\alpha  v^i,_{k}A^k_j   \right)  \\
  & \qquad
  = - \nabla _\eta \p^ \alpha \eta \, \nabla _\eta \p^ \alpha v + \operatorname{div} _\eta \p^ \alpha \eta\, \operatorname{div} _\eta \p^ \alpha v
  +2 \operatorname{curl}_ \eta \p^ \alpha \eta\, \operatorname{curl}_ \eta \p^ \alpha v \,.
\end{align*} 
\end{proof} 

\begin{lemma} \label{lemma_atan} For any multi-index $\alpha $, we define the matrix  
$ G^ \alpha  
=\left[ 
\begin{matrix}
\p^ \alpha \eta,_2\times \eta,_3 \\
\p^ \alpha \eta,_3\times \eta,_1 \\
\p^ \alpha \eta,_1\times \eta,_2
\end{matrix}
\right]$. 
Then,
$$
x_k\,  [G^ \alpha ]^k_{( \cdot )} = -(\bp \p^\alpha \eta^3\cdot \bp \eta^2 \,, \   \bp \p^ \alpha \eta^1 \cdot \bp \eta^3 \,, \  \bp \p^ \alpha \eta^2\cdot \bp \eta^1 )^T \,.
$$
\end{lemma}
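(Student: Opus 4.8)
\emph{Proof proposal.} The identity is pointwise and purely algebraic: it involves only the first derivatives of $\eta$ and the position vector $x$, and no equation of motion, so the plan is simply to unwind the cross products defining $G^\alpha$, contract against $x$, and recognise the result in terms of $\bp=x\times\nabla$.

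First I would write the rows of $G^\alpha$ with the permutation symbol: the $k$th row is $\p^\alpha\eta,_p\times\eta,_q$ with $(k,p,q)$ a cyclic permutation of $(1,2,3)$, so $[G^\alpha]^k_i=\varepsilon_{ijl}\,\p^\alpha\eta^j,_p\,\eta^l,_q$, and hence
\begin{equation}\nonumber
x_k\,[G^\alpha]^k_i=\sum_{(k,p,q)\ \mathrm{cyclic}} x_k\,\varepsilon_{ijl}\,\p^\alpha\eta^j,_p\,\eta^l,_q .
\end{equation}
The cross product has already antisymmetrised the two derivative slots, so the remaining task is to dispose of the free position component $x_k$. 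For this I would use the two elementary identities $\bp_m=\varepsilon_{mab}\,x_a\,\partial_b$ and $\varepsilon_{ijk}\varepsilon_{ilm}=\delta_{jl}\delta_{km}-\delta_{jm}\delta_{kl}$. Combining $x_k$ with the cyclically complementary derivative index turns the product $x_k\,\eta^l,_q$ into a component of $\bp\eta^l$ and, after the $\varepsilon$--$\delta$ identity is applied, the surviving pairing with the $\p^\alpha\eta^j$ factor becomes a component of $\bp\p^\alpha\eta^j$; summing the three cyclic contributions then collapses $x_k[G^\alpha]^k_i$ to the single inner product $\bp\p^\alpha\eta^{j}\cdot\bp\eta^{l}$ with $(i,j,l)$ running over the cyclic triple $(i,\,i-1,\,i+1)$ mod $3$, and one transposition of a permutation symbol supplies the overall minus sign, which is exactly the asserted right-hand side. (If a $\d$-weighted variant were ever needed, the facts $\bp_i\d=0$, equivalently that $\nabla\d$ is parallel to $x$ for \eqref{dist3d} and \eqref{dist3d_gen}, would be used at this stage.)

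Since the three scalar components of the claimed vector identity are cyclic relabelings of one another, in a written proof it is enough to verify one of them, say the first, by expanding all cross products and all $\bp$-operators in the coordinates $x_1,x_2,x_3$ and matching monomials, the other two following by the substitution $1\mapsto2\mapsto3\mapsto1$. The only real obstacle I foresee is the index bookkeeping: keeping the cyclic pairing $(k,p,q)$, the placement of $\p^\alpha$ on the correct factor of each cross product, and the signs from the permutation symbols mutually consistent. There is no analytic content, and the preceding lemma is not needed here.
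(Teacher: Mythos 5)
Your plan would run aground precisely where you say it ought to be routine. The identity you are asked to prove cannot hold as written, and the monomial-matching you propose in a single component would reveal this at once: the left side $x_k[G^\alpha]^k_i$ is \emph{linear} in $x$ ($G^\alpha$ carries no explicit $x$, and the contraction adjoins exactly one factor $x_k$), while the right side $\bp\p^\alpha\eta^{i-1}\cdot\bp\eta^{i+1}$ is \emph{quadratic} in $x$, since each factor $\bp = x\times\nabla$ contributes one. The mechanism you describe — ``combining $x_k$ with the cyclically complementary derivative index turns $x_k\,\eta^l,_q$ into a component of $\bp\eta^l$'' — is exactly one contraction of the form $\varepsilon_{kpq}x_k\,(\cdot),_q = -\bp_p(\cdot)$, and it spends the only available $x_k$; there is no second $x$ to manufacture the second $\bp$. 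So the ``collapse'' to $\bp\p^\alpha\eta^j\cdot\bp\eta^l$ cannot occur, and at most you would land on a mixed expression $\bp(\cdot)\cdot\nabla(\cdot)$. (Indeed $\bp f\cdot\nabla g = x\cdot(\nabla f\times\nabla g)$ is antisymmetric in $(f,g)$, whereas $\bp f\cdot\bp g = |x|^2\,\nabla f\cdot\nabla g - (x\cdot\nabla f)(x\cdot\nabla g)$ is symmetric; they are genuinely different objects.)

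There is a second, independent obstruction to the single-inner-product form. Writing $[G^\alpha]^k_i = \varepsilon_{ijl}\,\p^\alpha\eta^j,_{k+1}\,\eta^l,_{k+2}$, the antisymmetrization $\varepsilon_{ijl}$ produces for each fixed $i$ \emph{two} terms, one with $\p^\alpha$ on $\eta^{i+1}$ and one with $\p^\alpha$ on $\eta^{i+2}$; neither can be made to disappear, so $x_k[G^\alpha]^k_i$ is not a single $\p^\alpha\eta^{i-1}$ pairing. What \emph{is} true at leading order, using $a^k_i = \tfrac12\varepsilon_{ijl}\varepsilon_{kpq}\eta^j,_p\eta^l,_q$ together with $\varepsilon_{kpq}x_k(\cdot),_q = -\bp_p(\cdot)$, is the degree-$1$ identity
\begin{equation}\nonumber
x_k\,\p^\alpha a^k_i \;=\; \varepsilon_{ijl}\,\bp\,\p^\alpha\eta^j\cdot\nabla\eta^l + \text{l.o.t.}\,,\qquad\text{i.e.}\quad x_k\,\p^\alpha a^k_1 = \bp\p^\alpha\eta^2\cdot\nabla\eta^3 - \bp\p^\alpha\eta^3\cdot\nabla\eta^2 + \text{l.o.t.}\,,
\end{equation}
and this requires combining $G^\alpha$ with its Leibniz partner $\tilde G^\alpha$ (the version with $\p^\alpha$ on the second factor of each cross product); $G^\alpha$ alone leaves a symmetric remainder with no $\bp$ structure. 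The paper's own two-line argument writes the intermediate form with one $\nabla$ and one $\bp$ and then ``replaces $\nabla$ by $\bp$'' on the ground that $\bp\p^\alpha\eta^j\cdot x = 0$; as noted above that replacement is not an equality, and the intermediate form already drops one of the two $\varepsilon$-terms. The content actually used downstream is only the schematic fact that contracting against $x_k$ puts \emph{one} tangential derivative $\bp$ on the highest-order factor, which the displayed identity does deliver; but the literal two-$\bp$, single-term formula you set out to verify is not something your computation should be expected to confirm.
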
 
\begin{proof} 
Using the identity \eqref{bp}, we  find that 
$$x_k\,  [G^ \alpha ]^k_{( \cdot )}  = - ( \bp \p^ \alpha \eta^3 \cdot \nabla \eta^2\,, \bp\p^\alpha  \eta^1 \cdot \nabla \eta^3\,,  \bp \p^ \alpha  \eta^2\cdot \nabla \eta^1 )^T \,.$$   Since $\bp \p^ \alpha \eta^j \cdot x =0$ for
$j=1,2,3$, the $\nabla $ operator can be replaced with the $\bp$ operator, and we arrive at the desired formula.
\end{proof}

\subsection{Global existence in 3-d and stability of simple affine solutions} 
\def\Ee{   {\mathscr E}_2 }
\def\Eeg{   {\mathscr E}_\gamma }
We shall first consider perturbations of the simple affine solutions $\A(t) = \alpha (t) \operatorname{Id} $, and later treat more general affine flows.

\subsubsection{Some basic inequalities}
The time-dependent vacuum boundary $\Gamma(t):= \alpha(t) \eta(\Gamma_0,t)$
 is a closed surface whose geometry is controlled by the $\eta(x,t)$.     For our analysis,  we define the near identity map
 $$
 \deta (x,t) = \eta(x,t) -x \,.
 $$
 We shall assume that for $ \vartheta >0$ taken sufficiently small,
\begin{equation}\label{eta_bound2d}
\| \nabla  \deta(\cdot ,t)  \|_{L^\infty }  \le \vartheta  \ \text{ for all } t \ge 0 \,,
\end{equation} 
and prove later that  $\| \nabla  \deta(\cdot ,t)   \|_{L^\infty }  \le C\epsilon  $ for all $t \ge 0$ and for $ 0 < C \epsilon \ll \vartheta $.
Since
$$
\| \operatorname{Id} - A\|_{L^ \infty } = \| A\, (\nabla \eta - \operatorname{Id})\|_{L^ \infty }  \le \vartheta  \|A\|_{L^ \infty }  \,,
$$
then  $ (1-\vartheta) \|A\|_{L^ \infty } \le \| \operatorname{Id}\|_{L^ \infty } $ so that $\|A\|_{L^ \infty } \le  \frac{1}{1-\vartheta} \| \operatorname{Id}\|_{L^ \infty } $  and hence
\begin{equation}\label{Abound}
\|A-  \operatorname{Id} \|_{L^ \infty } \le {\frac{\vartheta}{1-\vartheta}} \lesssim \vartheta \ 
 \text{ and } \ \| A \, A^T - \operatorname{Id} \|_{L^ \infty } \le {\frac{3\vartheta}{1-\vartheta}} \lesssim \vartheta  \ \text{ for all } t \ge 0 \,.
\end{equation} 

The bound \eqref{eta_bound2d} 
implies that
\begin{equation}\label{ss10}
1 - {\frac{1}{10}} \le J(x, t) \le 1 + {\frac{1}{10}} \,,
\end{equation} 
and hence that
\begin{equation}\label{Jbound}
\left[1 + {\frac{1}{10}}\right]^{-1}  \le J ^{-1}  (x, t) \le \left[1 - {\frac{1}{10}}\right]^{-1}  \,.
\end{equation}

At time $t=0$, 
the physical vacuum condition  \eqref{degen} is satisfied as  $| \nabla \d| = 1/2 $ on $\Gamma_0 $ and that 
\begin{equation}\label{grad_d}
|\nabla \d| \le {\frac{1}{2}}  \ \text{ and } \  \d,_r = - {\frac{1}{2}} x,_r \ \text{ and } \ 
\d,_{r_1r_2} = -{\frac{1}{2}} \delta_{r_1r_2}\  \text{ on } \overline \Omega_0 \,,
\end{equation} 
where $\delta_{r_1r_2}$ denote the Kronecker delta.   Thanks to \eqref{Jbound}, we see that the physical vacuum condition continues to be
satisfied as long as the solution exists.

Finally, we shall use the fact that\footnote{
By \eqref{a3d},
$a- \operatorname{Id} =  \operatorname{div} \deta \operatorname{Id} - \operatorname{Diag}[ \deta^1,_1, \deta^2,_2, \deta^3,_3]
+ \mathcal{Q} ( \nabla \deta)$ which we write as $ \mathcal{L} ( \nabla \deta) + \mathcal{Q} ( \nabla \deta)$ with $ \mathcal{L} $ and $ \mathcal{Q} $
respectively denoting linear and quadratic functions of their arguments.
The identity \eqref{J3d} then shows that
$J = 1 + \operatorname{div} \deta +  \mathcal{Q} ( \nabla \deta)  + \mathcal{C} ( \nabla \deta)$, with $ \mathcal{C} $ denote a cubic function of its argument.
Hence,  $J ^{-2}  = 1- 2\operatorname{div} \deta + \P( \nabla \deta)$.
}
\begin{equation}\label{aJ2}
(a - \operatorname{Id}) J^{-2} =  \mathcal{L} ( \nabla \deta) + \P( \nabla \deta)\,,
\end{equation} 
where $ \mathcal{L} $ is linear function of its argument.

\subsubsection{The norm used for the $\gamma=2$ analysis}

\begin{definition}[Norm for  ${\mathbf \gamma=2}$ norm]\label{defE}
We define the $\gamma=2$ norm,  $\Ee(t):=\Ee\left( \eta( \cdot , t) \,, v( \cdot , t)\right) $, by
\begin{align} 
\Ee(t) &=\sum_{b=0}^8\sum_{\a=0}^{8-b} \left( \|\d^{{\frac{b+1}{2}} }  \nabla ^b \bp^\a \deta( \cdot ,t)\|_0^2  
+  \|\alpha ^2 \d^{{\frac{b+1}{2}} }  \nabla ^b \bp^a v( \cdot ,t)\|_0^2 \right) 
 +\sum_{b=1}^9  \| \alpha^{-1/2} \d^{{\frac{b+1}{2}} } \nabla ^b \bp^{9-b} \deta( \cdot ,t)\|_0^2 \nonumber \\
 & 
+\sum_{\a=0}^7  \left(\| \alpha^2 \bp^\a v( \cdot ,t) \|_{3.5-\a/2}^2 +  \|  \bp^\a \deta(\cdot ,t)  \|_{3.5-\a/2}^2 \right)
+\sum_{\a=0}^8   \| \alpha^{-1/2} \bp^\a \deta(\cdot ,t)  \|_{4-\a/2}^2  \nonumber \\
 &
+\sum_{b=1}^5 \sum_{\a=0}^{10-2b}    \left\| \alpha^{-1/2} \d^b \nabla^b \bp^\a \deta(\cdot ,t)  \right\|_{4-\a/2}^2 
+\sum_{b=1}^4 \sum_{a=0}^{9-2b} \left( \| \alpha ^2 \d^b \nabla ^b \bp^\a v\|^2_{3.5-\a/2} +  \|  \d^b \nabla ^b \bp^\a \deta\|^2_{3.5-\a/2}\right)
\nonumber
\\
&
+\sum_{b=0}^8
 \| \alpha ^2\d^{\frac{b+2}{2}}  \nabla^b \bp^{8-b} \operatorname{curl} _\eta v( \cdot , t)\|_0^2
\,.\nonumber
\end{align} 
\end{definition} 

\begin{remark} Only the first two terms and the last term  of $\Ee(t)$ are essential in the definition of the higher-order norm.   All of the other terms in $\Ee(t)$
follow from weighted embeddings and the Sobolev embedding theorem.  Moreover,   we have defined $\Ee(t)$ to have the fewest derivatives necessary for the Sobolev
embedding theorem to ensure that $\nabla \p_t v$ is pointwise bounded, a requirement for closing estimates for weighted derivatives of $ \operatorname{curl} \deta$.

More generally, we can define for all $K \ge 8$, the $K$-dependent higher-order norm as
\begin{align} 
\Ee^K(t) &=\sum_{b=0}^K\sum_{\a=0}^{K-b} \left( \|\d^{{\frac{b+1}{2}} }  \nabla ^b \bp^\a \deta( \cdot ,t)\|_0^2  
+  \|\alpha ^2 \d^{{\frac{b+1}{2}} }  \nabla ^b \bp^a v( \cdot ,t)\|_0^2 \right) 
 +\sum_{b=1}^{K+1}  \| \alpha^{-1/2} \d^{{\frac{b+1}{2}} } \nabla ^b \bp^{K+1-b} \deta( \cdot ,t)\|_0^2 \nonumber \\
&+
\sum_{b=0}^K
 \| \alpha ^2\d^{\frac{b+2}{2}}  \nabla^b \bp^{K-b} \operatorname{curl} _\eta v( \cdot , t)\|_0^2
\,.\nonumber
\end{align} 

\end{remark}
\begin{remark}\label{rem:data} With regards to the initial data for $\eta(x,0)$ which determines the initial domain of the problem, we can
 replace $\eta(x,0)=e$ with the initial condition $\eta(x,0)=\eta_0(x)$ for any near-identity embedding $\eta_0: \Omega_0 \to 
\mathbb{R} ^d$, such that $\Ee( \eta_0, u_0) < \epsilon $ for $ \epsilon >0$ chosen sufficiently small.   The only modification to our analysis requires the
replacement of the Jacobian
determinant $J$ by the ratio $J/(\det \nabla \eta_0)$.
\end{remark}

\subsubsection{Curl estimates}  For the case of perturbations of simple affine solutions $\A(t) = \alpha (t) \operatorname{Id} $, the curl estimates use
formula which is 
similar to that used in \cite{CoLiSh2010, CoSh2012}.

\begin{lemma}[Curl estimates]\label{lemma_curl} For all $t\ge 0$, 
\begin{align*} 
  &\underbrace{ \sum_{b=0}^7\sum_{\a=0}^{7-b} }_{b+\a>0} \|\d^{{\frac{b+2}{2}} } \operatorname{curl}   \nabla ^{b} \bp^\a \deta ( \cdot ,t)\|_0^2  
 +\sum_{b=0}^8  \| \alpha^{-1/2} \d^{{\frac{b+2}{2}} }\operatorname{curl}  \nabla ^b \bp^{8-b} \deta( \cdot ,t)\|_0^2  \lesssim 
  \Ee(0)  + \sup_{s \in [0,t]} \P (\Ee(s)) \,.
 \end{align*} 
\end{lemma}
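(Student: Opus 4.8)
The plan is to exploit the structure of the momentum equation written in the form \eqref{for_vorticity}. The pressure term $2A^k_i(\d J^{-1}),_k$ is precisely the $i$-th component of the Lagrangian gradient $2\nabla_\eta(\d J^{-1})$, hence it is annihilated by the Lagrangian curl $\operatorname{curl}_\eta$; moreover $\operatorname{curl}_\eta\eta\equiv0$ identically, since $A^r_j\,\eta^k,_r=\delta^k_j$ forces $\varepsilon_{ijk}A^r_j\,\eta^k,_r=\varepsilon_{ijj}=0$. Applying $\operatorname{curl}_\eta$ to \eqref{for_vorticity} and using these two facts leaves $\alpha^5\operatorname{curl}_\eta\p_t v+2\alpha^4\dot\alpha\,\operatorname{curl}_\eta v=0$. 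Commuting $\operatorname{curl}_\eta$ past $\p_t$ with the help of \eqref{Adiff1}, namely $\operatorname{curl}_\eta\p_t v=\p_t\operatorname{curl}_\eta v-\varepsilon_{\cdot jk}(\p_t A^r_j)v^k,_r$, and dividing by $\alpha^3$, yields the clean transport identity
\[
\p_t\!\big(\alpha^2\operatorname{curl}_\eta v\big)=\alpha^2 F,\qquad F:=\varepsilon_{\cdot jk}(\p_t A^r_j)v^k,_r=-\varepsilon_{\cdot jk}A^r_m v^m,_s A^s_j v^k,_r,
\]
in which — crucially — the pressure has been eliminated and $F$ is at least quadratic in $\nabla v$, with coefficients bounded through \eqref{Abound}. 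It is this identity, rather than the basic energy, that forces the curl to be small.

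Next I would apply the weighted operators $\d^{\frac{b+2}{2}}\nabla^b\bp^\a$ (and, at the top tangential level $\bp^{8-b}$, the extra $\alpha^{-1/2}$) directly to $\alpha^2\operatorname{curl}_\eta v$ and to the identity above. Since $\nabla^b\bp^\a$ commutes with $\p_t$ and with the scalar $\alpha$-weights, and the pressure is already gone, the right-hand side is $\alpha^2\d^{\frac{b+2}{2}}\nabla^b\bp^\a F$, which by Leibniz is a sum of products of derivatives of $A$, of $\nabla v$, and of $\nabla\deta$ (using $\p_tA=-A\nabla v\,A$ and $\nabla A\sim A\,\nabla^2\deta\,A$ from \eqref{Adiff1}). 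Redistributing powers of $\d$ across $\nabla$ by means of the weighted embedding \eqref{w-embed} and the higher-order Hardy inequality (Lemma \ref{hardy}), each such term is bounded by $\alpha^{-2}\,\P(\Ee(t))$; here one uses that $\Ee$ contains all the $\d$-weighted $L^2$ norms $\|\alpha^2\d^{\frac{b+1}{2}}\nabla^b\bp^\a v\|_0$ and $\|\d^{\frac{b+1}{2}}\nabla^b\bp^\a\deta\|_0$ together with their $H^s$-counterparts, and that $\alpha^2\nabla v$ is pointwise bounded (which is why $\Ee$ is built with that many derivatives). Integrating in time, fixing the constant of integration at $t=0$ with $\alpha(0)=1$, $\eta(\cdot,0)=e$, $v(\cdot,0)=u_0$, and using $\int_0^\infty\alpha^{-2}(s)\,ds<\infty$, I obtain
\[
\sum_{b=0}^{8}\big\|\alpha^2\d^{\frac{b+2}{2}}\nabla^b\bp^{8-b}\operatorname{curl}_\eta v(\cdot,t)\big\|_0^2\lesssim\Ee(0)+\sup_{s\in[0,t]}\P(\Ee(s)),
\]
together with the analogous lower-order $\d$-weighted bounds on $\operatorname{curl}_\eta v$ implied by the weighted embeddings.

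Finally I would convert these bounds into the stated control of the flat curl of $\deta$. Since the Eulerian curl commutes with $\p_t$ and $\deta(\cdot,0)=0$, we have $\operatorname{curl}\deta(\cdot,t)=\int_0^t\operatorname{curl} v(\cdot,s)\,ds$, and $\operatorname{curl} v=\operatorname{curl}_\eta v+\varepsilon_{\cdot jk}(\delta^r_j-A^r_j)v^k,_r$, the second term being quadratic because $\operatorname{Id}-A=\mathcal{L}(\nabla\deta)+\P(\nabla\deta)$. Applying $\d^{\frac{b+2}{2}}\nabla^b\bp^\a$ (and, for $\a=8-b$, the weight $\alpha^{-1/2}$), commuting as before, inserting the bound just established for the weighted derivatives of $\operatorname{curl}_\eta v$, and controlling the quadratic remainder with the $W^{1,\infty}$- and $H^s$-control of $\alpha^2\nabla v$ and of $\nabla\deta$ contained in $\Ee$, gives, schematically,
\[
\big\|\d^{\frac{b+2}{2}}\nabla^b\bp^\a\operatorname{curl}\deta(\cdot,t)\big\|_0\lesssim\Big(\int_0^t\!\alpha^{-2}(s)\,ds\Big)\big(\Ee(0)+\sup_{s\in[0,t]}\P(\Ee(s))\big)^{1/2}+\int_0^t\!\alpha^{-2}(s)\,\P(\Ee(s))\,ds,
\]
and likewise for the terms carrying the $\alpha^{-1/2}$ weight; squaring and using $\int_0^\infty\alpha^{-2}<\infty$ together with $(\sup\Ee)^2=\sup\P(\Ee)$ yields the lemma.

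The step I expect to be the main obstacle is the weighted, high-order commutator analysis of the second paragraph: one must verify that whenever a spatial derivative (or a factor $\d^{\pm1}$) falls on an $A$-factor inside $\operatorname{curl}_\eta$ or on a Jacobian-type factor inside $F$, the resulting term still lies in $\Ee$ after the $\d$-weights are rebalanced — in particular that at top order the ninth spatial derivative of $v$ enters only through the antisymmetric curl combination that $\Ee$ actually controls — and that the two weight families $\d^{\frac{b+2}{2}}\nabla^b\bp^\a$ and $\alpha^{-1/2}\d^{\frac{b+2}{2}}\nabla^b\bp^{8-b}$ are correctly matched to the corresponding terms of $\Ee$; this is exactly where Hardy's inequality and the fractional weighted embeddings do the work.
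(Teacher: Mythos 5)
Your plan follows the paper's framework closely: apply $\operatorname{curl}_\eta$ to \eqref{for_vorticity}, use the Piola annihilation of $\nabla_\eta$ and the identity $\operatorname{curl}_\eta\eta=0$, commute $\p_t$ past $\operatorname{curl}_\eta$ with \eqref{Adiff1}, and arrive at the same transport identity $\p_t(\alpha^2\operatorname{curl}_\eta v)=\alpha^2F$ with $F$ quadratic in $\nabla v$; the conversion to the flat $\operatorname{curl}\deta$ via $A-\operatorname{Id}$ and a further time integral is also the paper's.

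The genuine gap is precisely the step you flag as the ``main obstacle,'' and the tool you point to there is the wrong one. To bound $\|\alpha^2\d^{\frac{b+2}{2}}\nabla^b\bp^{8-b}\operatorname{curl}_\eta v\|_0$ directly from $\alpha^2\operatorname{curl}_\eta v=\operatorname{curl} u_0+\int_0^t\alpha^2F\,dt'$ you must handle a term in which all eight derivatives plus the curl land on a single $v$, i.e.\ a term of the schematic form $\int_0^t\alpha^{-2}\big(\alpha^2\d^{\frac{b+2}{2}}\nabla^{b+1}\bp^{8-b}v\big)\cdot(\alpha^2\nabla v)\P(A)\,dt'$. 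That is nine spatial derivatives on $v$ carrying the $\alpha^2$ time-weight, and $\Ee$ contains no such quantity: the $\alpha^2$-weighted norms in $\Ee$ stop at eight derivatives on $v$, and the only ninth-order terms in $\Ee$ carry the $\alpha^{-1/2}$ weight and act on $\deta$, not on $v$. Hardy's inequality and the weighted/fractional embeddings \eqref{w-embed} only trade $\d$-powers against derivative counts in space; they cannot convert the $\alpha^2$ weight on $v$ into the $\alpha^{-1/2}$ weight on $\deta$. The resolution in the paper is a specific integration-by-parts \emph{in time} inside the iterated integrals: using $\p_tv=\p_t^2\deta$ one writes (with the $\sqrt{\alpha(t')/\alpha(t)}$ normalization inserted by the paper)
\begin{align*}
\int_0^t \alpha^{-2}\,\big(\alpha^2\nabla^{b+1}\bp^{8-b} v\big)\cdot G\,dt'
&= \Big[\nabla^{b+1}\bp^{8-b}\deta\cdot \alpha^2 G\Big]_0^t
-\int_0^t \nabla^{b+1}\bp^{8-b}\deta\cdot\p_t\big(\alpha^2 G\big)\,dt' ,
\end{align*}
so that the ninth derivative lands on $\deta$ and is absorbed by the $\alpha^{-1/2}\d^{\frac{b+1}{2}}\nabla^b\bp^{9-b}\deta$ terms of $\Ee$, while the $\p_t(\alpha^2G)$ piece requires the pointwise bound on $\nabla\p_tv$ supplied by the momentum equation (this is exactly why $\Ee$ is built with enough derivatives that $\nabla\p_tv\in L^\infty$). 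Without this step neither your intermediate bound on $\operatorname{curl}_\eta v$ nor the conversion to $\operatorname{curl}\deta$ closes; the same issue recurs in the single-integral remainder $\int_0^t\nabla^b\bp^\a[v^k,_r(A^r_j-\delta^r_j)]\,dt'$ coming from replacing $\operatorname{curl}_\eta v$ by $\operatorname{curl} v$. The paper estimates $\operatorname{curl}\deta$ directly from the twice-time-integrated formula \eqref{sss2} and performs this integration by parts there, whereas you propose to bound $\operatorname{curl}_\eta v$ first and then integrate once more; either ordering can work, but only after the time integration by parts is introduced.
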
 
Note that in 2-D,  by defining $ \operatorname{curl} \deta= \nabla ^\perp \cdot \deta$ with $\nabla ^\perp= (-\p_2, \p_1)$, the lemma also holds.
\begin{proof}  We let  $ \operatorname{curl} _\eta$ act on 
equation \eqref{for_vorticity}, written as
$\alpha^5\p_t v + 2\alpha ^4 \dot\alpha v + \eta + 2  \nabla _\eta \left(\d J^{-1}\right)=0$, and using the fact that
$ \operatorname{curl} _\eta$ annihilates $ \nabla _\eta$ and that $ \operatorname{curl} _\eta \eta =0$, we obtain that
$$
\operatorname{curl} _\eta \p_t v + 2 \frac{\dot \alpha}{ \alpha } \operatorname{curl}_\eta v =0 \,.
$$

We commute $\p_t$ with $ \operatorname{curl} _\eta$ and find that
$$
\p_t (\operatorname{curl} _\eta v ) +2 \frac{\dot \alpha}{ \alpha } \operatorname{curl}_\eta v = \varepsilon_{ \cdot kj} v^k,_r A^r_l v^l,_m A^m_j \,,
$$
and so using the $ \alpha ^2$ integrating-factor, we have that
\begin{equation}\label{curl_need}
\alpha ^2 \operatorname{curl} _\eta v ( \cdot , t)= \operatorname{curl} u_0 +  \varepsilon_{ \cdot kj} \int_0^t  \alpha ^2  v^k,_r A^r_l v^l,_m A^m_j  \, dt' \,.
\end{equation} 
Then, since  $\operatorname{curl} _\eta v = \operatorname{curl} v +  \varepsilon_{ \cdot jk }v^k,_r (A^r_j - \delta ^r_j)$, 
\begin{equation}\nonumber
 \operatorname{curl}  v ( \cdot , t)=  \at \operatorname{curl} u_0 + \varepsilon_{ \cdot kj }v^k,_r (A^r_j - \delta ^r_j) +  \varepsilon_{ \cdot kj}\at \int_0^t  \alpha ^2  v^k,_r A^r_l v^l,_m A^m_j  \, dt' \,.
\end{equation} 
We set $\beta(t) =  \int_0^t \alpha(t') ^{-2} dt' \le C  < \infty$ for $ t \ge 0$.   Applying the fundamental theorem of calculus once again, and using the fact that
$ \operatorname{curl} \eta = \operatorname{curl} \deta$, shows that
\begin{equation}\label{sss2}
 \operatorname{curl}  \deta ( \cdot , t)=  \beta(t) \operatorname{curl} u_0 + \varepsilon_{ \cdot kj } \int_0^t v^k,_r (A^r_j - \delta ^r_j) dt'
 +  \varepsilon_{ \cdot kj}  \int_0^t \at \int_0^{t'}  \alpha ^2  v^k,_r A^r_l v^l,_m A^m_j  \, dt'' dt' \,.
\end{equation} 

For $0\le b \le 7$ and $ 0\le \a \le  7-b$ with $b+\a>0$, we have that
\begin{align*} 
\d^{{\frac{b+2}{2}} }\nabla^b\bp^\a \operatorname{curl}  \deta ( \cdot , t)
&=  \beta(t) \d^{{\frac{b+2}{2}} }\nabla^b\bp^\a  \operatorname{curl} u_0 
+ \varepsilon_{ \cdot kj } \int_0^t \d^{{\frac{b+2}{2}} }\nabla^b\bp^\a \left[ v^k,_r (A^r_j - \delta ^r_j)\right] dt' \\
& \qquad
 +  \varepsilon_{ \cdot kj}  \int_0^t \at \int_0^{t'}  \alpha ^2 \d^{{\frac{b+1}{2}} } \nabla^b\bp^\a \left[ v^k,_r A^r_l v^l,_m A^m_j \right] \, dt'' dt' \,.
 \end{align*} 
From the definition of $\Ee(t)$, for $0\le b \le 7$ and $ 0\le \a \le 7-b $ and $b+\a>0$, 
$\d^{{\frac{b+2}{2}} }\nabla^b\bp^\a \operatorname{curl}  \deta$ and 
$ \alpha ^2 \d^{{\frac{b+2}{2}} }\nabla^b\bp^\a  \nabla  v$ have the same 
regularity.   We have that
\begin{align*} 
\|\d^{{\frac{b+2}{2}} }\nabla^b\bp^\a \operatorname{curl}  \deta ( \cdot , t)\|_0^2
&=  \beta(t)\int_{\Omega_0} \d^{b+2} \nabla^b\bp^\a  \operatorname{curl} u_0  \ \nabla^b\bp^\a \operatorname{curl}  \deta dx  +
\mathcal{J}^{b,\a} _1 +\mathcal{J}^{b,\a}_2 \,,
\end{align*} 
where
\begin{align*} 
\mathcal{J}_1^{b,\a} &= \varepsilon_{ \cdot kj } \int_{\Omega_0}  \int_0^t \at \d^{b+2}\nabla^b\bp^\a \left[ \alpha ^2 v^k,_r (A^r_j - \delta ^r_j)\right] dt' 
 \ \nabla^b\bp^\a \operatorname{curl}  \deta dx  \,, \\
\mathcal{J}_2^{b,\a} & 
 =  \varepsilon_{ \cdot kj} \int_{\Omega_0}   \int_0^t \at \int_0^{t'} \at \d^{b+2}  \nabla^b\bp^\a \left[ \alpha ^2v^k,_r A^r_l  \alpha ^2v^l,_m A^m_j \right] \, dt'' dt'
  \ \nabla^b\bp^\a \operatorname{curl}  \deta dx  \,.
\end{align*} 
Rather than requiring the precise structure of the partial derivatives, it is only the derivative count that is important in estimating $ \mathcal{J} _1^{b,\a}$ and 
$ \mathcal{J} _2^{b,\a}$; hence, we shall write these integrals as
\begin{align*} 
\mathcal{J}_1^{b,\a} &=  \int_{\Omega_0}  \int_0^t \at \d^{b+2} \nabla^b\bp^\a \left[ \alpha ^2 \nabla v \ \nabla \deta \right] dt' 
 \ \nabla^b\bp^\a \operatorname{curl}  \deta dx  \,, \\
\mathcal{J}_2^{b,\a} & 
 =  \int_{\Omega_0}   \int_0^t \at \int_0^{t'} \at \d^{b+2} \nabla^b\bp^\a  \left[ \alpha ^2 \nabla v \P(A)  \alpha ^2 \nabla v \right]  \, dt'' dt'
  \ \nabla^b\bp^\a  \operatorname{curl}  \deta dx  \,,
\end{align*}

\vspace{.1 in}
\noindent
{\it The case that $b=0$.}  We begin with the case that  $1\le a\le 7$, and
\begin{align*} 
\mathcal{J}_1^{0,\a} &=  \int_{\Omega_0}  \int_0^t \alpha^{-2} \d \bp^{\a} \left[ \alpha ^2 \nabla v \ \nabla \deta \right] dt' 
\ \d  \bp^{\a} \operatorname{curl}  \deta dx  \,, \\
\mathcal{J}_2^{0,\a} & 
 =  \int_{\Omega_0}   \int_0^t \at \int_0^{t'} \alpha^{-2} \d  \bp^{\a}  \left[ \alpha ^2 \nabla v \P(A)  \alpha ^2 \nabla v \right]  \, dt'' dt'
  \  \d  \bp^{\a}  \operatorname{curl}  \deta dx  \,.
\end{align*} 
We estimate the case that $\a=7$ whose analysis also covers the cases $1\le \a \le 6$.  We use the  integrability-in-time of $ \alpha ^{-2}$
together with the Cauchy-Schwartz inequality to estimate the integral $\mathcal{J}_1^{0,7}$ as follows:
\begin{align*} 
\mathcal{J}_1^{0,7} \lesssim \sup_{s \in [0,t]}  \left\| \d \bp^{7} [ \alpha ^2 \nabla v \  \nabla \deta]  \right\|_0 
\|  \d  \bp^7 \operatorname{curl}  \deta\|_0
\end{align*} 
Then, using the product rule and H\"{o}lder's inequality, we have that
{\footnotesize
\begin{alignat*}{2}
&\| \d \alpha ^2 \bp^7 \nabla v \,  \ah\nabla \deta\|_0 
 \lesssim  \| \d \alpha ^2 \bp^7 \nabla v \|_0 \  \| \nabla \deta\|_{L^ \infty } \,, \ 
&&\| \d \alpha ^2 \bp^6\nabla  v \, \ah \bp \nabla \deta\|_0 
 \lesssim  \| \d \alpha ^2 \bp^6\nabla v \|_0 \ \| \bp \nabla \deta\|_{L^ \infty } \,,  \\
&\| \d \alpha ^2 \bp^5\nabla  v \, \ah \bp^2 \nabla \deta\|_0 
 \lesssim  \| \alpha ^2 \bp^5\nabla  v \|_0\  \| \d \bp^2 \nabla \deta\|_{L^ \infty }\,,  \ \ 
&&\| \d \alpha ^2 \bp^4 v \, \ah \bp^3 \nabla \deta\|_0 
 \lesssim  \| \alpha ^2 \bp^4 v \|_0  \ \|\d  \bp^3 \nabla \deta\|_{L^ \infty }  \,,\\
&\| \d \alpha ^2 \bp^3 v \, \ah \bp^4 \nabla \deta\|_0 
\lesssim  \| \alpha ^2 \bp^3 v \|_{L^6}  \ \|\d  \bp^4 \nabla \deta\|_{L^ 3 }  \,,
&&\| \d \alpha ^2 \bp^2 v \, \ah \bp^5 \nabla \deta\|_0 
 \lesssim  \| \alpha ^2 \bp^2 v \|_{L^ \infty }  \ \|\d  \bp^5 \nabla \deta\|_0  \,, \\
&\| \d \alpha ^2 \bp v \, \ah \bp^6 \nabla \deta\|_0 
 \lesssim  \| \alpha ^2 \bp v \|_{L^ \infty }  \ \|\d  \bp^6 \nabla \deta\|_0  \,,
&&\| \d \alpha ^2  v \, \ah \bp^7 \nabla \deta\|_0 
 \lesssim  \| \alpha ^2  v \|_{L^ \infty }  \ \|\d  \bp^7 \nabla \deta\|_0   \,,
\end{alignat*} 
}
and hence
$$\mathcal{J}_1^{0,7} \lesssim \sup_{s \in [0,t]} \P(\Ee(s)) \,.$$
The  integral  $\mathcal{J}_2^{0,7}$ is estimated using the same H\"{o}lder pairs.  Thus, with the Cauchy-Schwarz inequality, we have
shown that for $0\le \a\le 7$, 
\begin{align*} 
\|\d  \bp^\a \operatorname{curl}  \deta ( \cdot , t)\|_0^2
&=  \beta(t)\int_{\Omega_0} \d^{2} \bp^\a  \operatorname{curl} u_0  \ \bp^\a \operatorname{curl}  \deta dx  +
\mathcal{J}^{0,\a} _1 +\mathcal{J}^{0,\a}_2  
\lesssim \Ee(0) +  \sup_{s \in [0,t]} \P(\Ee(s)) \,.
\end{align*}

We now examine the difficult case where   $\bp^8$ acts on $ \operatorname{curl} \deta$.
From \eqref{sss2}, we also have that
\begin{align} 
\| \ah \d \bp^8 \operatorname{curl}  \deta ( \cdot , t)\|_0^2
&=  \beta(t)\int_{\Omega_0} \d^{2} \bp^8  \operatorname{curl} u_0  \  \alpha^{-1} \bp^8 \operatorname{curl}  \deta dx  +
\mathcal{J}^{0,8} _1 +\mathcal{J}^{0,8}_2 \,, \label{sss1}
\end{align} 
where
\begin{align*} 
\mathcal{J}_1^{0,8} &=   \int_{\Omega_0}  \int_0^t  \d \ \bp^8 \left[ \nabla v \  \nabla \deta \right] dt' 
 \  \alpha^{-1} \d \bp^8 \operatorname{curl}  \deta dx  \,, \\
\mathcal{J}_2^{0,8} & 
 =  \int_{\Omega_0}   \int_0^t \alpha^{-2} \int_0^{t'}  \d\  \bp^8 \left[ \alpha ^2 \nabla v \P(A)  \nabla v \right]  \, dt'' dt'
  \ \alpha ^{-1}   \d \bp^8 \operatorname{curl}  \deta dx  \,.
\end{align*} 
We begin with the estimate for $\mathcal{J}_1^{0,8} $, which we write as 
\begin{align*} 
\mathcal{J}_{1}^{0,8} &=
\sum_{c=0}^8 \int_{\Omega_0}  \int_0^t \alpha^{-2} \d\ \alpha ^2 \bp^{8-c} \nabla v \ \bp^c \nabla \deta dt' 
 \ \alpha(t)^{-1} \d \  \bp^8 \operatorname{curl}  \deta dx\\
 & 
 =
 \sum_{c=0}^8 \int_{\Omega_0}  \int_0^t \alpha^{-2} \d\ \alpha ^2 \bp^{8-c} \nabla v \ \bp^c \nabla \deta \, 
 \  \alpha(t')^{-1/2} \sqrt{\frac{\alpha (t')}{\alpha (t)} } \, dt' \   \alpha(t)^{-1/2}\d \bp^8 \operatorname{curl}  \deta dx \,.
\end{align*}  
We shall use the that fact $ \sqrt{\frac{\alpha (t')}{\alpha (t)}} < C < \infty $ for all $t \ge 0$ with the constant $C$ independent of $t$ (this follows from the
condition $\dot \alpha(t) \ge 0$).

In the case that $c=8$, 
since $ \alpha ^{-2}$ is integrable in time, 
\begin{align*}
|\mathcal{J}_{1,c=8}^{0,8} | & \lesssim  \sup_{s \in [0,t]}  \|  \ah  \d \bp^{8} \nabla \deta\|_0  \| \alpha ^2 \nabla v\|_{L^ \infty } \|  \ah \d \bp^8 \operatorname{curl}  \deta\|_0 \lesssim \sup_{s \in [0,t]} \P(\Ee(s)) \,.
 \end{align*} 
In the case that $0 \le c\le 7$, 
we integrate-by-parts in time to obtain that  
\begin{align*} 
\mathcal{J}_{1,c}^{0,8} 
&=
- \int_{\Omega_0}  \int_0^t  \d  \bp^{8-c} \nabla \deta \  \bp^c \nabla v \, dt' 
 \  \alpha(t) ^{-1} \d \bp^c \operatorname{curl}  \deta dx 
 +  \int_{\Omega_0}  \d \bp^{8-c} \nabla \deta  \  \bp^c \nabla \deta 
 \  \alpha(t) ^{-1} \d \bp^8 \operatorname{curl}  \deta dx \\
 &=
- \int_{\Omega_0}  \int_0^t  \alpha^{-2} \d \  \alpha(t')^{-1/2}\bp^{8-c} \nabla \deta \  \alpha ^2 \bp^c \nabla v \,  \sqrt{\frac{\alpha (t')}{\alpha (t)} } \, dt' 
 \ \alpha(t) ^{-1/2} \d \bp^8 \operatorname{curl}  \deta dx  \\
 & \qquad\qquad\qquad\qquad
 +  \int_{\Omega_0}   \ah  \d \bp^{8-c} \nabla \deta  \  \bp^c \nabla \deta 
 \ \ah \d \bp^8 \operatorname{curl}  \deta dx \,.
 \end{align*} 
Since $ \alpha ^{-2}$ is integrable in time, it follows that for $c=0,1$,
\begin{align*}
|\mathcal{J}_{1,c}^{0,8} | & \lesssim  \sup_{s \in [0,t]}  \|  \ah  \d \bp^{8-c} \nabla \deta\|_0  \| \alpha ^2 \bp^c \nabla v\|_{L^ \infty } \|  \ah \d \bp^8 \operatorname{curl}  \deta\|_0 \lesssim \sup_{s \in [0,t]} \P(\Ee(s)) \,.
 \end{align*} 
 For $c=2,3,4$, we estimate $  \ah  \d \bp^{8-c} \nabla \deta \in L^6(\Omega_0)$ and $ \alpha ^2 \bp^c \nabla v \in L^3(\Omega_0)$,
for $c=5,6$, we estimate $  \ah  \bp^{8-c} \nabla \deta \in L^6(\Omega_0)$ and $ \alpha ^2\d \bp^c \nabla v \in L^3(\Omega_0)$, and 
for $c=7$, we estimate $  \ah  \bp \nabla \deta \in L^ \infty (\Omega_0)$ and $ \alpha ^2\d \bp^7 \nabla v \in L^2(\Omega_0)$.
 Therefore, $|\mathcal{J}_{1}^{0,8} |  \lesssim \sup_{s \in [0,t]} \P(\Ee(s))$.

 We next consider the integral $\mathcal{J}_2^{0,8} $; in particular, 
we need only analyze the integral
 \begin{align*} 
 \mathcal{J}_{2,(i)}^{0,8} & 
 =   \int_{\Omega_0}   \int_0^t \alpha^{-2} \int_0^{t'}  \d^{2}  \alpha ^2\bp^{8}  \nabla v \P(A)  \nabla v \, dt'' dt'
  \ \alpha(t) ^{-1}  \bp^8 \operatorname{curl}  \deta dx 
\end{align*} 
 as all the other cases are easily estimated by  H\"{o}lder's inequality.   We must again integrate-by-parts in time, and we write the integral as
$
 \mathcal{J}_{2,(i)}^{0,8}  = \mathcal{K} _1 +\mathcal{K} _2 + \mathcal{K} _3$
 where
\begin{align*} 
\mathcal{K} _1 & =  -   \int_{\Omega_0}   \int_0^t \alpha^{-2} \int_0^{t'}  \sqrt{\frac{\alpha(t'')}{\alpha (t)}}  \alpha (t'')^{-1/2}\d  \bp^{8} \nabla \deta \
 \P(A)\  \p_t\left[   \alpha ^2 \nabla v\right] \, dt'' dt'
  \ ( \alpha(t) ^{-1/2} \d \bp^8 \operatorname{curl}  \deta) dx  \,, \\
  \mathcal{K} _2 
  & =
   -   \int_{\Omega_0}   \int_0^t \alpha^{-2} \int_0^{t'}  \sqrt{\frac{\alpha(t'')}{\alpha (t)}}  \alpha (t'')^{-1/2}\d \bp^{8} \nabla \deta \ \P(A)  \ \nabla v \ \alpha ^2 \nabla v \, dt'' dt'
  \ ( \alpha ^{-1/2}  \d \bp^8 \operatorname{curl}  \deta) dx  \,, \\
\mathcal{K} _3 & = 
  \int_{\Omega_0}   \int_0^t \alpha^{-2} \ \     (\sqrt{\frac{\alpha(t')}{\alpha (t)}}  \alpha (t')^{-1/2} \d \bp^{8} \nabla \deta) \ \P(A)   \alpha ^2 \nabla v\, dt'
  \ (\alpha(t) ^{-1/2} \d \bp^8 \operatorname{curl}  \deta )dx \,.
\end{align*} 
The integral $ \mathcal{K} _2 $ and $ \mathcal{K} _3$ are easy to estimate (by virtue of the integrability of $\at$ in time) as follows:
 \begin{align*} 
  \mathcal{K} _2 
  & =
   -   \int_{\Omega_0}   \int_0^t \alpha^{-2} \int_0^{t'}\at \ \sqrt{\frac{\alpha(t'')}{\alpha (t)}}   (\alpha (t'')^{-1/2}  \d \bp^{8} \nabla \deta)  \ \P(A)  \  \alpha ^2 \nabla v \ \alpha ^2 \nabla v \, dt'' dt'
  \ (\ah \d \bp^8 \operatorname{curl}  \deta) dx  \\
  & \le  \sup_{s \in [0,t]}  \| \ah \d \bp^{8} \nabla \deta  \|_0 \| \alpha ^2 \nabla v \|_{L^ \infty }^2 \| \ah \d \bp^8 \operatorname{curl}  \deta\|_0
  \le \sup_{s \in [0,t]} \P(\Ee(s)) \,,
 \end{align*} 
and similarly
$$
\mathcal{K} _3 \le \sup_{s \in [0,t]}  \| \ah \d \bp^{8} \nabla \deta  \|_0 \| \alpha ^2 \nabla v \|_{L^ \infty } \| \ah \d \bp^8 \operatorname{curl}  \deta\|_0
\le \sup_{s \in [0,t]} \P(\Ee(s)) \,.
$$
To estimate $ \mathcal{K} _1$, we write \eqref{for_vorticity} as
$$\alpha^{2.5} \p_t(  \alpha ^2  v) + \ah \deta + \ah x  + 2\ah  A \, \nabla  (\d J ^{-1} )=0 \,,$$
and hence
$$
\alpha^{2.5} \p_t(  \alpha ^2  \nabla v ) = -  \ah \left( \nabla \deta - \operatorname{Id}  +2  \nabla \left(A \,\nabla  (\d J ^{-1} \right) \right)\,.
$$
Substitution of this identity show that the integral $ \mathcal{K}_1 =  \mathcal{K}_{1 (i)}+  \mathcal{K}_{1 (ii)}$, where
\begin{align*} 
\mathcal{K} _{1(i)} & =     \int_{\Omega_0}   \int_0^t \alpha^{-2} \int_0^{t'} \alpha^{-2.5} (\ah \d  \bp^{8} \nabla \deta) \ \P(A) \left[  \ah \nabla \eta \right] \, dt'' dt'
  \ (\ah \d  \bp^8 \operatorname{curl}  \deta) dx  \,, \\
  \mathcal{K} _{1(ii)} & =    2 \int_{\Omega_0}   \int_0^t \alpha^{-2} \int_0^{t'} \alpha^{-2.5} \d^{2}  \bp^{8} \nabla \deta \, \P(A) \, 
  \ah\left[  \nabla \left( A \,\nabla (\d J ^{-1} )\right) -  \operatorname{Id} 
  \right] \, dt'' dt'
  \ \alpha ^{-1}  \bp^8 \operatorname{curl}  \deta dx  \,.
\end{align*} 
Since by the Sobolev embedding theorem,
 $ \sup_{s \in [0,t]}   \| \ah \nabla \eta(\cdot , t)\|_{L^ \infty } \lesssim \sup_{s \in [0,t]}  \Ee^{\frac{1}{2}} (s)$, we see that
 \begin{align*} 
\mathcal{K} _{1(i)} \lesssim  \| \ah \d  \bp^{8} \nabla \deta\|_0 \| \ah  \nabla \eta\|_{L^ \infty }  \| \ah \d  \bp^8 \operatorname{curl}  \deta\|_0
\lesssim \sup_{s \in [0,t]} \P(\Ee(s)) \,.
\end{align*}

Now, with \eqref{grad_d},
\begin{align*} 
2 A \, \nabla  (\d J ^{-1} ) & = -  x \, A\, J ^{-1} - 2\ah  J^{-1} \d\,\nabla _\eta ( \operatorname{div} _\eta \, \eta) \\
&=  x \, A\, J ^{-1} +   x \, (A\, J ^{-1} - \operatorname{Id} ) - 2  J^{-1} \d\,\nabla _\eta ( \operatorname{div} _\eta \, \eta)
\end{align*} 
and so
$$
 \left[ \nabla \left( A \,\nabla (\d J ^{-1} )\right) -\operatorname{Id} \right]= 
 \left[ (A \, J^{ ^{-1} }  - \operatorname{Id} )+ x J^{-1} \P(A) \nabla ^2 \, \eta (1  + \d \nabla ^2 \eta) + 
J^{-1} \P(A) \nabla ( \d \nabla ^2 \eta) \right] \,.
$$
It follows from \eqref{Abound}, \eqref{Jbound}, and \eqref{aJ2} that 
$$
\| \ah \nabla \left( A \,\nabla (\d J ^{-1} )\right)\|_{L^ \infty } \lesssim \ah \left[\| \nabla \deta\|_{L^ \infty } + \| \nabla ^2\deta\|_{L^ \infty } +   \| \nabla ^2\deta\|_{L^ \infty } 
 \| \d \nabla ^2\deta\|_{L^ \infty }  +\| \nabla ( \d \nabla ^2 \deta)\|_{L ^ \infty }\right] \,.
$$
Since $\d \nabla ^2 \deta = \nabla ( \d \nabla \deta ) + {\frac{1}{2}} x\nabla \deta$, then
$$\nabla ( \d \nabla ^2 \deta) = \nabla ^2 (\d \nabla \deta)  +{\frac{1}{2}} x\nabla^2 \deta + {\frac{1}{2}} \nabla \deta$$ 
and hence since
$  \| \alpha^{-1/2} \deta(\cdot ,t)  \|_{4} $ and $\| \alpha^{-1/2} \d \nabla  \deta(\cdot ,t)  \|_{4}$ are both bounded by $\Ee(t)^ {\frac{1}{2}} $, the Sobolev embedding
theorem shows that
$$
\| \ah \nabla \left( A \,\nabla (\d J ^{-1} )\right)\|_{L^ \infty } \lesssim \Ee(t)^ {\frac{1}{2}}  + \Ee(t) \,.
$$
Therefore,
 \begin{align*} 
\mathcal{K} _{1(ii)} \lesssim  \| \ah \d  \bp^{8} \nabla \deta\|_0 \| \ah \nabla \left( A \,\nabla (\d J ^{-1} )\right)\|_{L^ \infty }  \| \ah \d  \bp^8 \operatorname{curl}  \deta\|_0
\lesssim \sup_{s \in [0,t]} \P(\Ee(s)) \,.
\end{align*} 
We have thus estimated the integral $ \mathcal{K} _1$ which together with our bounds for $\mathcal{K} _2 $ and $ \mathcal{K}_3$ shows that
$
 \mathcal{J}_{2,(i)}^{0,8}  \lesssim \sup_{s \in [0,t]} \P(\Ee(s))$, and hence that
$
  \mathcal{J}_{2}^{0,8}  \lesssim \sup_{s \in [0,t]} \P(\Ee(s)$.
 
From \eqref{sss1}, and the Cauchy-Schwarz inequality, we have shown that
\begin{align*} 
\| \ah \d \bp^8 \operatorname{curl}  \deta ( \cdot , t)\|_0^2 \lesssim \Ee(0) + \sup_{s \in [0,t]} \P(\Ee(s)) \,,
\end{align*} 
and this concludes the estimates for the case that $b=0$.

 \vspace{.1 in}
\noindent
{\it The case that $b=1$.} We start with the case that  $0\le a\le 6$, and 
\begin{align*} 
\|\d^{{\frac{3}{2}} }\nabla^b\bp^\a \operatorname{curl}  \deta ( \cdot , t)\|_0^2
&=  \beta(t)\int_{\Omega_0} \d^{3} \nabla^b\bp^\a  \operatorname{curl} u_0  \ \nabla^b\bp^\a \operatorname{curl}  \deta dx  +
\mathcal{J}^{1,\a} _1 +\mathcal{J}^{1,\a}_2 \,,
\end{align*} 
where
\begin{align*} 
\mathcal{J}_1^{1,\a} &=  \int_{\Omega_0}  \int_0^t \alpha^{-2} \d^{3} \nabla\bp^\a \left[ \alpha ^2 \nabla v \ \nabla \deta \right] dt' 
 \ \nabla\bp^\a \operatorname{curl}  \deta dx  \,, \\
\mathcal{J}_2^{1,\a} & 
 =  \int_{\Omega_0}   \int_0^t \at \int_0^{t'} \alpha^{-2} \d^{3} \nabla\bp^\a  \left[ \alpha ^2 \nabla v \P(A)  \alpha ^2 \nabla v \right]  \, dt'' dt'
  \  \nabla\bp^\a  \operatorname{curl}  \deta dx  \,,
\end{align*} 
We analyze the case that $\a=6$, which also covers the cases $0\le \a \le 5$.  Again, we use the  integrability-in-time of $ \alpha ^{-2}$
together with the Cauchy-Schwartz inequality to estimate the integral $\mathcal{J}_1^{1,6}$ as follows:
\begin{align*} 
\mathcal{J}_1^{1,6} \lesssim \sup_{s \in [0,t]}  \left\| \d^{3/2} \nabla  \bp^{6} [ \alpha ^2 \nabla v \ \ah \nabla \deta]  \right\|_0 
\|  \d^{3/2}\nabla  \bp^{6} \operatorname{curl}  \deta\|_0 \,.
\end{align*} 
Next, we write
$$
 \d^{3/2} \nabla  \bp^{6} [ \alpha ^2 \nabla v \ \ah \nabla \deta] 
 =  \d^{3/2}  \bp^{6} [ \alpha ^2 \nabla^2 v \ \ah \nabla \deta]  + \d^{3/2}  \bp^{6} [ \alpha ^2 \nabla v \ \ah \nabla^2 \deta]   \,.
$$
We estimate the $L^2$-norm of  $\d^{3/2}  \bp^{6} [ \alpha ^2 \nabla^2 v \ \ah \nabla \deta] $ as follows:
{\footnotesize
\begin{alignat*}{2}
&\| \d^{3/2} \alpha ^2 \bp^6 \nabla^2 v \,  \nabla \deta\|_0 
 \lesssim  \| \d^{3/2} \alpha ^2 \bp^6 \nabla^2 v \|_0 \  \|  \nabla \deta\|_{L^ \infty } \,, \ \ 
&&\|  \alpha ^2\d^{3/2} \bp^5 \nabla^2 v \,   \bp \nabla \deta\|_0 
 \lesssim  \|\alpha ^2 \d^{3/2}  \bp^5 \nabla^2 v \|_0 \  \|  \bp \nabla \deta\|_{L^ \infty }  \,, \\
&\| \d^{3/2} \alpha ^2 \bp^4 \nabla^2 v \,   \bp^2 \nabla \deta\|_0 
 \lesssim  \| \alpha ^2  \d \bp^4 \nabla^2 v \|_{L^3} \  \|  \bp^2 \nabla \deta\|_{L^ 6 }   \,,  \ \ \ 
&&\| \d^{3/2} \alpha ^2 \bp^3 \nabla^2 v \,   \bp^3 \nabla \deta\|_0 
 \lesssim  \| \alpha ^2 \bp^3 \nabla^2 v \|_0 \  \| \d  \bp^3 \nabla \deta\|_{L^ \infty }   \,, \\
&\| \d^{3/2} \alpha ^2 \bp^2 \nabla^2 v \,    \bp^4 \nabla \deta\|_0 
 \lesssim  \| \alpha ^2 \bp^2 \nabla^2 v \|_{L^3} \  \| \d  \bp^4 \nabla \deta\|_{L^ 6}   \,,  \ \ \ 
&&\| \d^{3/2} \alpha ^2 \bp \nabla^2 v \,    \bp^5 \nabla \deta\|_0 
 \lesssim  \| \alpha ^2 \bp \nabla^2 v \|_{L^ 3} \  \| \d \bp^5 \nabla \deta\|_{L^6}    \,, \\
&\| \d^{3/2} \alpha ^2 \bp^2 \nabla^2 v \,   \bp^6 \nabla \deta\|_0 
 \lesssim  \| \alpha ^2  \nabla^2 v \|_{L^ 6 } \  \| \d  \bp^6 \nabla \deta\|_{L^3}  \,.
\end{alignat*}
}
The  $L^2$-norm of  $\d^{3/2}  \bp^{6} [ \alpha ^2 \nabla v \ \ah \nabla^2 \deta] $ is estimated using the same H\"{o}lder pairs. Hence,
$\mathcal{J}_1^{1,6}  \lesssim \Ee(0) + \sup_{s \in [0,t]} \P(\Ee(s)) $.
 We use H\"{o}lder's inequality in the
identical way to estimate the integral $\mathcal{J}_2^{1,6}$ as well.  Hence, with Cauchy-Schwarz,   for $0\le a\le 6$, 
\begin{align*} 
\|\d^{{\frac{3}{2}} }\nabla^b\bp^\a \operatorname{curl}  \deta ( \cdot , t)\|_0^2 \lesssim \Ee(0) + \sup_{s \in [0,t]} \P(\Ee(s)) \,.
\end{align*} 
In the hardest case that $\a=7$, we have that
\begin{align} 
\| \ah \d ^{3/2} \bp^7 \nabla  \operatorname{curl}  \deta ( \cdot , t)\|_0^2
&=  \beta(t)\int_{\Omega_0} \d^{3}  \nabla\bp^7 \operatorname{curl} u_0  \  \ah \nabla \bp^7 \operatorname{curl}  \deta dx  +
\mathcal{J}^{1,7} _1 +\mathcal{J}^{1,7}_2 \,, \label{sss3}
\end{align} 
where
\begin{align*} 
\mathcal{J}_1^{1,7} &=  \int_{\Omega_0}  \int_0^t  \d^{3} \bp^7 \left[ \nabla^2 v \ \nabla \deta +  \nabla v \ \nabla^2 \deta\right] dt' 
 \ \alpha  ^{-1} \nabla\bp^7 \operatorname{curl}  \deta dx  \,, \\
\mathcal{J}_2^{1,7} & 
 =  \int_{\Omega_0}   \int_0^t \at \int_0^{t'}  \d^{3} \bp^7  \left[ \alpha ^2 \nabla^2 v \P(A)   \nabla v +  \nabla ^2\deta \  \alpha ^2 \nabla v\,  \P(A)\,   \nabla v   \right]  \, dt'' dt'
  \ \alpha ^{-1}  \nabla\bp^7  \operatorname{curl}  \deta dx  \,
\end{align*} 
The most challenging terms in  $\mathcal{J}_1^{1,7} $ are written as
\begin{align*} 
\mathcal{J}_{1(i)}^{1,7} 
&=  \sum_{c=0}^7\int_{\Omega_0}  \int_0^t  \d^{3/2} \ \left[ \bp^{7-c}   \nabla^2 v \  \bp^c \nabla \deta \right] dt' 
 \ \alpha  ^{-1} \d^{3/2} \nabla\bp^7 \operatorname{curl}  \deta dx  \,.
\end{align*} 
Following our analysis of the term $\mathcal{J}_{1}^{0,8} $ given above, we integrate-by-parts in time, and find that
\begin{align*} 
\mathcal{J}_{1(i)}^{1,7} 
&= - \sum_{c=0}^7\int_{\Omega_0}  \int_0^t  \alpha^{-2}  \d^{3/2} \ \left[ \sqrt{ \frac{\alpha (t')}{\alpha(t)}} \ah(t') \bp^{7-c} \nabla^2 \deta \ \alpha^2 \bp^c \nabla v \right] dt' 
 \ \alpha  ^{-1/2} \d^{3/2} \nabla\bp^7 \operatorname{curl}  \deta dx   \\
 & \qquad \qquad + \sum_{c=0}^7\int_{\Omega_0}   \d^{3/2} \ \left[ \ah \bp^{7-c} \nabla^2 \deta \ \bp^c \nabla \deta \right] 
 \ \alpha  ^{-1/2} \d^{3/2} \nabla\bp^7 \operatorname{curl}  \deta dx   \,.
\end{align*}
Due to the time integrability of $\alpha^{-2}$, we have that
\begin{align*}
\mathcal{J}_{1(i)}^{1,7} & \lesssim \sup_{s \in [0,t]}   \left\|  \d^{3/2} \ \left[ \ah \bp^{7-c} \nabla^2 \deta \ \alpha^2 \bp^c \nabla v \right] \right\|_0 
\| \alpha  ^{-1/2} \d^{3/2} \nabla\bp^7 \operatorname{curl}  \deta\|_0 \\
& \qquad + \sup_{s \in [0,t]}    \left\|  \d^{3/2} \ \left[ \ah \bp^{7-c} \nabla^2 \deta \ \bp^c \nabla \deta \right]\right\|_0
\| \alpha  ^{-1/2} \d^{3/2} \nabla\bp^7 \operatorname{curl}  \deta\|_0  \,.
 \end{align*} 
Since $\alpha^2 \bp^c \nabla v $ and $ \bp^c \nabla \deta$ have the same space regularity,  
it suffices to estimate the $L^2(\Omega_0)$-norm of $  \d^{3/2} \ \left[ \ah \bp^{7-c} \nabla^2 \deta \ \alpha^2 \bp^c \nabla v \right]$ as follows: 
using 
H\"{o}lder's inequality, if $c=0,1$, then we estimate $ \ah\d^{3/2} \bp^{7-c} \nabla^2 \deta \in L^2(\Omega_0)$ and $\alpha^2 \bp^c \nabla v  \in
L^ \infty (\Omega_0)$.  If $c=2,3$, then we estimate $ \ah\d \bp^{7-c} \nabla^2 \deta \in L^3(\Omega_0)$ and $\alpha^2 \bp^c \nabla v  \in
L^6 (\Omega_0)$.   If $c=4$, then we estimate $ \ah\d \bp^{3} \nabla^2 \deta \in L^6(\Omega_0)$ and $\alpha^2 \bp^4 \nabla v  \in
L^3 (\Omega_0)$.   If $c=5$, then we estimate $ \ah\d \bp^{2} \nabla^2 \deta \in L^ \infty (\Omega_0)$ and $\alpha^2 \bp^5 \nabla v  \in
L^2 (\Omega_0)$.   If $c=6$, then we estimate $ \ah\bp  \nabla^2 \deta \in L^6 (\Omega_0)$ and $\alpha^2 \d \bp^6 \nabla v  \in
L^3 (\Omega_0)$.  Finally, if  $c=7$, then we estimate $ \ah \nabla^2 \deta \in L^ \infty  (\Omega_0)$ and $\alpha^2 \d \bp^7 \nabla v  \in
L^2 (\Omega_0)$.    This shows that $\mathcal{J}_1^{1,7} \lesssim \sup_{s \in [0,t]} \P(\Ee(s)) $.

The same integration-by-parts in time argument used to estimate $ \mathcal{J} ^{0,8}_2$ is used to estimate the integral $\mathcal{J} ^{1,7}_2$.   The use
of H\"{o}lder's inequality is the same as just detailed for  $\mathcal{J}_1^{1,7} $, and we find that $\mathcal{J}_2^{1,7} \lesssim \sup_{s \in [0,t]} \P(\Ee(s)) $.
Hence, from \eqref{sss3}, we find that 
$$
\| \ah \d ^{3/2} \bp^7 \nabla  \operatorname{curl}  \deta ( \cdot , t)\|_0^2 \le \Ee(0) + \sup_{s \in [0,t]} \P(\Ee(s)) \,.
$$

 \vspace{.1 in}
\noindent
{\it The cases $2\le b\le 7$.} These cases follow identically to the cases $b=0$ and $b=1$ detailed above.

 \vspace{.1 in}
\noindent
{\it The case $b=8$.}  In this case, 
\begin{align} 
\|\ah \d^{5}\nabla^b\bp^\a \operatorname{curl}  \deta ( \cdot , t)\|_0^2
&=  \beta(t)\int_{\Omega_0} \d^{10} \nabla^8  \operatorname{curl} u_0  \  \alpha  ^{-1} \nabla^8  \operatorname{curl}  \deta dx  +
\mathcal{J}^{8,0} _1 +\mathcal{J}^{8,0}_2 \,, \label{sss4}
\end{align} 
where
\begin{align*} 
\mathcal{J}_1^{8,0} &=  \int_{\Omega_0}  \int_0^t \at \d^{5} \nabla^8  \left[ \alpha ^2 \nabla v \ \nabla \deta \right] dt' 
 \ \alpha^{-1} \d^5 \nabla^8 \operatorname{curl}  \deta dx  \,, \\
\mathcal{J}_2^{8,0} & 
 =  \int_{\Omega_0}   \int_0^t \at \int_0^{t'} \at \d^{5} \nabla^8  \left[ \alpha ^2 \nabla v \P(A)  \alpha ^2 \nabla v \right]  \, dt'' dt'
  \ \alpha^{-1}  \d^5 \nabla^8   \operatorname{curl}  \deta dx  \,.
\end{align*} 
We begin with the estimate for $\mathcal{J}_1^{8,0} $, which we write as
\begin{align*} 
\mathcal{J}_{1}^{8,0} &=
\sum_{c=0}^8 \int_{\Omega_0}  \int_0^t \alpha^{-2} \d^5\alpha ^2\nabla^{9-c}  v \ \alpha(t) ^{-1/2}     \nabla^{1+c} \deta \, \sqrt{\frac{ \alpha (t') }{\alpha (t)}}\, dt' 
 \ \alpha(t) ^{-1/2}  \d^5 \nabla ^8 \operatorname{curl}  \deta dx\,.
\end{align*}  
In the case that $c=8$, 
since $ \alpha ^{-2}$ is integrable in time, 
\begin{align*}
|\mathcal{J}_{1,c=8}^{8,0} | & \lesssim  \sup_{s \in [0,t]}  \|  \ah  \d^5 \nabla^{9} \deta\|_0  \| \alpha ^2 \nabla v\|_{L^ \infty } \|  \ah \d^5 \nabla^8 \operatorname{curl}  \deta\|_0 \lesssim \sup_{s \in [0,t]} \P(\Ee(s)) \,.
 \end{align*} 
In the case that $0 \le c\le 7$, 
we integrate-by-parts in time to obtain that  
\begin{align*} 
\mathcal{J}_{1,c}^{8,0} 
 &=
- \int_{\Omega_0}  \int_0^t  \alpha^{-2} \d^5  \alpha(t')^{-1/2} \nabla^{9-c} \deta \  \alpha ^2 \nabla^{1+c}  v \, \sqrt{\frac{ \alpha (t') }{\alpha (t)}} \,  dt' 
 \ \alpha ^{-1} \d^5 \nabla^8 \operatorname{curl}  \deta dx  \\
 & \qquad\qquad\qquad\qquad
 +  \int_{\Omega_0}   \ah  \d^5  \nabla^{9-c} \deta  \  \nabla^{1+c}  \deta 
 \ \ah \d^5 \nabla^8 \operatorname{curl}  \deta dx  \\
 & \lesssim \sup_{s \in [0,t]}   \left( \left\|   \ah \d^5  \nabla^{9-c} \deta \  \alpha ^2 \nabla^{1+c} v  \right\|_0
  + \|  \ah  \d^5  \nabla^{9-c} \deta  \  \nabla^{1+c}  \deta  \|_0\right) \|  \ah \d^5 \nabla^8 \operatorname{curl}  \deta \|_0 \,.
 \end{align*} 
 Since $ \alpha ^2 \nabla^{1+c} v$ and $ \nabla^{1+c}  \deta$ have the same space regularity, it suffices to explain the estimate for the $L^2$-norm
 of $ \ah \d^5  \nabla^{9-c} \deta \  \alpha ^2 \nabla^{1+c} v $.  As $0\le c \le 7$ changes, we use the following H\"{o}lder pairs:
 {\footnotesize
 \begin{alignat*}{2}
 & \ah \d^5  \nabla^{9} \deta  \in L^2\,,  \alpha ^2 \nabla v \in L^ \infty\,, \ \ \   &&\ah \d^4  \nabla^8 \deta  \in L^2\,,  \alpha ^2\d \nabla^2 v \in L^ \infty\,, \\
 & \ah \d^3  \nabla^7 \deta  \in L^2\,,  \alpha ^2\d^2 \nabla^3 v \in L^ \infty\,,  \ \ \   &&\ah \d^3  \nabla^6 \deta  \in L^6\,,  \alpha ^2\d^2 \nabla^4 v \in L^3\,, \\
  & \ah \d^3  \nabla^5 \deta  \in L^6\,,  \alpha ^2\d^2 \nabla^5 v \in L^3\,,  \ \ \   &&\ah \d^2  \nabla^4 \deta  \in L^\infty \,,  \alpha ^2\d^{2.5} \nabla^6 v \in L^2\,, \\
  &\ah \d  \nabla^3 \deta  \in L^\infty \,,  \alpha ^2\d^{3.5} \nabla^7 v \in L^2\,, \ \ \ && \ah   \nabla^2 \deta  \in L^\infty \,,  \alpha ^2\d^{4.5} \nabla^8 v \in L^2\,.
\end{alignat*} 
}
where we have used the fact that both $\d^{2.5} \nabla ^6 v( \cdot , t)$ and $\d^{3.5} \nabla ^7 v( \cdot , t)$ are bounded by $\Ee(t)^ {\frac{1}{2}} $ by the 
weighted embedding \eqref{w-embed}.   It follows that $\mathcal{J}_1^{8,0} \lesssim \sup_{s \in [0,t]} \P(\Ee(s)) $,
 
Again, the same integration-by-parts in time argument used to estimate $ \mathcal{J} ^{0,8}_2$ is used to estimate the integral $\mathcal{J} ^{8,0}_2$.   The use
of H\"{o}lder's inequality is the same as just detailed for  $\mathcal{J}_1^{8,0} $, and we find that $\mathcal{J}_2^{8,0} \lesssim \sup_{s \in [0,t]} \P(\Ee(s)) $.
Hence, from \eqref{sss4}, 
$$
\|\ah \d^{5}\nabla^b\bp^\a \operatorname{curl}  \deta ( \cdot , t)\|_0^2 \le \Ee(0) + \sup_{s \in [0,t]} \P(\Ee(s)) \,.
$$
We have thus established a priori estimates for weighted derivatives of $ \operatorname{curl} \deta$.
 \end{proof} 

\subsubsection{Statement and proof of the stability theorem for $\gamma=2$} Having established weighted estimates for derivatives of 
$ \operatorname{curl} \deta$, we now turn to energy estimates to establish the existence of global-in-time perturbations to the affine flow.   There are
two fundamental ideas for energy estimates: first, we use Lemma \ref{lemma_aenergy}, which shows that for a differential operator $D$, the
structure $ D a^k_i  D v^i,_k $ produces the highest-order energy contribution modulo curl estimates that have already been established.   As to the
energy estimates, in order to build the regularity of $\deta$ and $v$, one could study a succession of time-differentiated problems as was done
in \cite{CoLiSh2010, CoSh2012} or use a succession of higher-order weighted space derivatives with the power of the weight increasing with the
derivative count as was done in \cite{JaMa2015, HaJa2016}.   Because our equation for the perturbations has temporal weights, it appears more natural
to use the method of  higher-order weighted space derivatives as in  \cite{JaMa2015, HaJa2016}.

Before stating the global existence and stability theorem, we establish   two useful identities.

\begin{lemma} \label{cor1} We have that
$$
-\left[\d ^{-1} \left( \d^2 W\right),_k\right],_{r_1} = -\d^{-2} \left( \d^3 W,_{r_1}\right),_k - \left[ \d,_{r_1} W,_k - \d,_k W,_{r_1}\right] + \delta_{k r_1} W \,,
$$
and
 for $s \ge 2$, 
\begin{align} 
&
-\left[ \d ^{-1} \left(\d^2 W\right),_k\right],_{r_1 \ddd r_s}
=
-\d^{-(s+1)} \left( \d^{(s+2)} W,_{r_1\ddd r_s}\right),_k + {\frac{s+1}{2}}  \delta_{k r_s} W,_{r_1 \ddd r_{s-1}} \nonumber \\
&\qquad
+ {\frac{s}{2}}  \delta_{k r_{s-1}} W,_{r_1 \ddd r_{s-2}r_s}
+ {\frac{s-1}{2}}   \delta_{k r_{s-2}} W,_{r_1 \ddd r_{s-3} r_{s-1}r_s} + \ddd +  \delta_{k r_1} W,_{r_2\ddd r_s}\nonumber \\
&\qquad
+ \d,_k   W,_{r_1\ddd r_s} - \d,_{r_s}   W,_{r_1 \ddd r_{s-1} k} 
+ \p_{r_s} \left[ \d,_k  W,_{r_1 \ddd r_{s-1}} - \d,_{r_{s-1}}   W,_{r_1\ddd r_{s-2} k}  \right] \nonumber \\
&\qquad
+ \p_{r_{s-1} r_s} \left[ \d,_k  W,_{r_1 \ddd r_{s-2}} - \d,_{r_{s-2}}   W,_{r_1 \ddd r_{s-3}k}  \right] 
\nonumber \\
&\qquad  \qquad
+ \ddd 
+ \p_{r_2 \ddd  r_s} \left[ \d,_k   W,_{r_1 } - \d,_{r_1}   W,_{ k}  \right]  \,, \nonumber
\end{align} 
where $ \delta_{k0}=0$ and $W_{r_1 \ddd j} =0$ for $j \le 0$.
\end{lemma}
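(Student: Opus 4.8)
The plan is to prove the $s=1$ identity by a direct computation and then obtain the $s\ge 2$ identities by induction on $s$, simply applying one more derivative $\p_{r_s}$ to the $(s-1)$-st identity. The whole computation is driven by the elementary observation that, for any smooth scalar $V$ and any $m\ge 1$,
\[
\d^{-m}\bigl(\d^{m+1}V\bigr),_k \;=\; (m+1)\,\d,_k\, V + \d\, V,_k\,,
\]
which follows from the Leibniz rule together with the cancellation of the positive powers of $\d$. In particular, the operator $W\mapsto-\d^{-1}(\d^2 W),_k$ is nothing but $W\mapsto -2\,\d,_k W-\d\,W,_k$, and the ``leading term'' $-\d^{-(s+1)}(\d^{s+2}W,_{r_1\ddd r_s}),_k$ on the right-hand side equals $-(s+2)\,\d,_k\,W,_{r_1\ddd r_s}-\d\,W,_{k r_1\ddd r_s}$. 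Beyond the product rule and the symmetry of mixed partials, the only structural input is the single fact $\d,_{ij}=-\tfrac12\delta_{ij}$ on $\overline\Omega_0$ recorded in \eqref{grad_d}; in particular no third-order derivatives of $\d$ ever occur.

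\textbf{Base case $s=1$.} Writing $-\d^{-1}(\d^2 W),_k=-2\,\d,_k W-\d\,W,_k$, applying $\p_{r_1}$, and using $\d,_{k r_1}=-\tfrac12\delta_{k r_1}$ gives
\[
-\bigl[\d^{-1}(\d^2 W),_k\bigr],_{r_1}=\delta_{k r_1}W-2\,\d,_k W,_{r_1}-\d,_{r_1}W,_k-\d\,W,_{k r_1}\,.
\]
On the other hand $-\d^{-2}(\d^3 W,_{r_1}),_k=-3\,\d,_k W,_{r_1}-\d\,W,_{r_1 k}$, so the asserted right-hand side is $-3\,\d,_k W,_{r_1}-\d\,W,_{r_1 k}-\d,_{r_1}W,_k+\d,_k W,_{r_1}+\delta_{k r_1}W$, which coincides with the line above. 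This proves the first identity.

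\textbf{Inductive step.} Assume the identity holds with $s$ replaced by $s-1$, for some $s\ge 2$ (the case $s-1=1$ being the one just established), and apply $\p_{r_s}$ to both sides. The left-hand side becomes $-[\d^{-1}(\d^2 W),_k],_{r_1\ddd r_s}$. On the right-hand side I treat the three groups of terms separately. Differentiating the $(s-1)$-leading term, which equals $-(s+1)\,\d,_k W,_{r_1\ddd r_{s-1}}-\d\,W,_{k r_1\ddd r_{s-1}}$, and using $\d,_{k r_s}=-\tfrac12\delta_{k r_s}$, produces exactly the new leading term $-\d^{-(s+1)}(\d^{s+2}W,_{r_1\ddd r_s}),_k$ of the $s$-identity together with the first delta term $\tfrac{s+1}{2}\delta_{k r_s}W,_{r_1\ddd r_{s-1}}$ and the pair $+\d,_k W,_{r_1\ddd r_s}-\d,_{r_s}W,_{r_1\ddd r_{s-1}k}$ (invoking the symmetry $W,_{k r_1\ddd r_{s-1}}=W,_{r_1\ddd r_{s-1}k}$). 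Differentiating the remaining delta terms of the $(s-1)$-identity just decorates each with the extra derivative $\p_{r_s}$, yielding $\tfrac{s}{2}\delta_{k r_{s-1}}W,_{r_1\ddd r_{s-2}r_s}+\ddd+\delta_{k r_1}W,_{r_2\ddd r_s}$; and differentiating the telescoping list of bracketed terms $\d,_k W,_{r_1\ddd r_{s-1}}-\d,_{r_{s-1}}W,_{r_1\ddd r_{s-2}k}$, $\p_{r_{s-1}}[\cdots]$, $\ldots$ prepends one more $\p_{r_s}$ to each, reproducing precisely the list $\p_{r_s}[\cdots]+\p_{r_{s-1}r_s}[\cdots]+\ddd+\p_{r_2\ddd r_s}[\cdots]$ of the $s$-identity. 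Collecting the three groups gives the claimed formula.

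\textbf{Main obstacle.} The only real work is bookkeeping: one must check that every term generated by $\p_{r_s}$ in the inductive step lands on exactly one term of the $s$-identity, with nothing left over and nothing missing. This becomes transparent once the leading expressions are rewritten in the form $-(m+1)\,\d,_k(\cdot)-\d\,(\cdot),_k$, because then the sole source of genuinely new delta-type terms is the differentiation of a bare factor $\d,_k$ through $\d,_{k r_s}=-\tfrac12\delta_{k r_s}$, while every other contribution merely re-decorates an existing delta term with an additional $\p_{r_s}$ or deepens the nested differential operator acting on a bracketed expression. It is prudent to verify the $s=2$ instance explicitly as a sanity check (noting that for $s=2$ the telescoping list of bracketed terms has length one), after which the general case follows by the induction above.
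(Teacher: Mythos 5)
Your proof is correct and takes essentially the same approach as the paper: both proceed by applying one more derivative to the leading weighted term and tracking the new delta and bracket terms, and your rewriting of the leading term as $-(m+1)\,\d,_k V-\d\,V,_k$ (together with $\d,_{ij}=-\tfrac12\delta_{ij}$) is precisely the content of the paper's one-step identity \eqref{ss100}, which the paper then invokes by "repeated application."
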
 
This lemma is very similar to Lemma 4.4 in \cite{HaJa2016}.
\begin{proof} 
 For integers $\ell \ge 2$,
\begin{equation}\label{ss100}
- \left[ \d^{-\ell+1} \left( \d^\ell w\right),_i\right],_m = - \d^{-\ell} \left(\d^{\ell+1} w,_m\right),_i + {\frac{\ell}{2}} \delta_{im} w + \d,_i w,_m - \d,_m w,_i \,.
\end{equation} 
Indeed, this follows from the  following simple computation:
\begin{align*} 
\left[ \d^{-\ell+1} \left( \d^\ell w\right),_i\right],_m & = \d^{-\ell+1} (\d^\ell w),_{im} - (\ell-1) \d^{-\ell} \d,_m ( \d^\ell w),_i \\
 &=\d^{-\ell+1} (\d^\ell w,_m),_{i}  + \ell \d^{-\ell+1}( \d^{\ell+1} \d,_m w),_i- (\ell-1) \d^{-\ell} \d,_m ( \d^\ell w),_i\\
 &= \d^{-\ell} (\d^{\ell+1} w,_m),_i - \d,_i w,_m + \ell \d,_m w,_i + (1-\ell) \d,_m w,_i \\
 &\ + \ell(\ell-1) \d^{-\ell+1} \d^{\ell-2} \d,_i d,_m w - {\frac{\ell}{2}} \delta_{im} w - \ell(\ell-1) \d^{-\ell} \d^{\ell-1} \d,_i\d,_m w
\end{align*} 
with cancellations in the last equality yielding \eqref{ss100}.

By repeated application of this identity, the lemma is proved.
\end{proof}

\begin{lemma}\label{lemma_tan}
For each  anti-symmetric matrix $M:= M^i_r$, we associate the vector 
$$\M= \left( M^3_2 - M^2_3\,, M^1_3 - M^3_1\,, M^2_1 - M^1_2\right) \,.$$
Then for a differentiable scalar function $f$,
$$
\left[ \d,_i f,_r - \d,_r f,_i\right]\, M^i_r = {\frac{1}{2}} \bp f \cdot \M \,.
$$
\end{lemma}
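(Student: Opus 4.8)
The plan is to reduce the asserted identity to a routine contraction of the permutation symbol, with all the content hidden in two elementary observations. The first is that, since $M$ is anti-symmetric, it is completely determined by the associated vector $\M$ through
$$
M^i_r = -\tfrac12\,\varepsilon_{irk}\,\M^k\,,
$$
which I would verify simply by evaluating both sides on the index pairs $(i,r)=(1,2),(1,3),(2,3)$; the transposed pairs then follow by anti-symmetry and the diagonal ones are trivial. The second observation uses the explicit geometry: by \eqref{grad_d} one has $\d,_r = -\tfrac12\,x,_r$ on $\overline \Omega_0$, so the definition \eqref{bp} of $\bp = x\times\nabla$ can be rewritten as $\bp f = -2\,\nabla\d\times\nabla f$, and since $\varepsilon_{kpq}$ is already anti-symmetric in $(p,q)$ this reads
$$
[\bp f]_k = -2\,\varepsilon_{kpq}\,\d,_p\,f,_q = -\,\varepsilon_{kpq}\big(\d,_p\,f,_q - \d,_q\,f,_p\big)\,.
$$

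Granting these, the proof is a one-line computation: insert $M^i_r = -\tfrac12\varepsilon_{irk}\M^k$ on the left, use the cyclic invariance $\varepsilon_{irk}=\varepsilon_{kir}$ to recognize the factor $\varepsilon_{kir}(\d,_i\,f,_r - \d,_r\,f,_i) = -[\bp f]_k$ from the second identity, and collect the sign to obtain
$$
\big[\d,_i\,f,_r - \d,_r\,f,_i\big]\,M^i_r = -\tfrac12\,\M^k\,\varepsilon_{kir}\big(\d,_i\,f,_r - \d,_r\,f,_i\big) = \tfrac12\,\M^k\,[\bp f]_k = \tfrac12\,\bp f\cdot\M\,.
$$
As a cross-check I would also run the purely component-wise version: the left-hand side collapses, after using the joint anti-symmetry in $(i,r)$ of both factors, to
$$
2\big[(\d,_1 f,_2 - \d,_2 f,_1)M^1_2 + (\d,_1 f,_3 - \d,_3 f,_1)M^1_3 + (\d,_2 f,_3 - \d,_3 f,_2)M^2_3\big]\,,
$$
and substituting $M^1_2=-\tfrac12\M^3$, $M^1_3=\tfrac12\M^2$, $M^2_3=-\tfrac12\M^1$ together with the component expressions for $[\bp f]_k$ read off from \eqref{bp} and \eqref{grad_d} reproduces $\tfrac12\,\bp f\cdot\M$ on the nose.

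There is no analytic difficulty in this lemma; it is pure linear algebra in three dimensions. The only step that genuinely requires care — and the place where a careless attempt is most likely to pick up a wrong sign or a spurious factor of $2$ — is the orientation bookkeeping: the convention in $\varepsilon_{ijk}$ fixed by \eqref{permutation}, the factor $-\tfrac12$ relating $x$ to $\nabla\d$ from \eqref{grad_d}, and the exact index ordering in the definition \eqref{bp} of $\bp$. I would therefore carry out the explicit three-component check first to pin these conventions down, and only then write up the index-free argument. This lemma is precisely the device that will let me convert the anti-symmetric ``curl-type'' remainders produced by Lemma \ref{cor1} (the bracketed terms $\d,_k W,_{r_i} - \d,_{r_i} W,_k$) into purely tangential $\bp$-derivatives of $W$, which are the quantities controlled by the higher-order norm.
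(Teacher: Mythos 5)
Your proposal is correct, and it essentially reproduces the paper's argument, which is the direct component expansion using $\d,_r = -\tfrac12 x_r$. Your primary, index-free route through the two contraction identities $M^i_r=-\tfrac12\varepsilon_{irk}\M^k$ and $[\bp f]_k=-\varepsilon_{kpq}(\d,_p f,_q-\d,_q f,_p)$ is a tidy repackaging of the same computation; it buys you immunity from sign slips and generalizes cleanly, but proves nothing extra. The "cross-check" you append at the end is, in fact, word-for-word the paper's own proof: the authors simply write $2[\d,_i f,_r - \d,_r f,_i]M^i_r$ out in components (after substituting for $\nabla\d$) and observe that the result is $\bp f\cdot\M$ by inspection of the definitions of $\bp$ and $\M$. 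Both presentations are sound.
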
 
\begin{proof} 
Using \eqref{grad_d},  
\begin{align*} 
&2\left[ \d,_i f,_r - \d,_r f,_i\right]\, M^i_r \\
&
= (x_2 f,_3 - x_3 f,_2) (M^3_2 - M^2_3) + (x_3 f,_1 - x_1 f,_3) (M^1_3 - M^3_1) + (x_1 f,_2 - x_2 f,_1) (M^2_1 - M^1_2)  \,,
\end{align*} 
which is the desired identity.
\end{proof}

\begin{theorem}[Perturbations of simple affine motion for $\gamma=2$]\label{mainthm}  For $\gamma=2$ and  $ \epsilon >0$ taken sufficiently small, if the initial data satisfies
 $\Ee(0) <  \epsilon $, then there exists a global  solution $\eta(x,t)$ of \eqref{ceuler03D}  such that $\Ee(t) \le \epsilon $ for all $t \ge 0$.    In particular,
 the flow map $\eta \in C^0([0, \infty ), H^4(\Omega_0))$ and the moving vacuum boundary $\Gamma(t)$ is of class $H^{3.5}$ for all $t\ge 0$.
 The composition
 $\xi(x,t)= \alpha (t) \eta(x,t)$ defines a  global-in-time solution of  the Euler equations \eqref{ceuler3d}.   
  If the initial data satisfies $\Ee^9(t) \le C$, then the solution $ \xi (x,t)$ is unique.
\end{theorem}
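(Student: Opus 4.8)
The plan is to run, at the level of the higher-order norm $\mathscr{E}_2(t)$, the same five-step scheme already used in the one-dimensional theorems. \emph{Step 1:} mollify the initial velocity, $u_0\mapsto u_0^\kappa$, and invoke the local well-posedness theory of \cite{CoSh2012} to obtain a family $(\eta_\kappa,v_\kappa)$ of solutions of \eqref{ceuler03D} on a short interval, smooth enough that every tangential derivative $\bp^\a$ and weighted normal derivative $\d^{(b+1)/2}\nabla^b$ entering $\mathscr{E}_2$ may be taken rigorously; then suppress $\kappa$. \emph{Step 2:} for each admissible pair $(b,\a)$, let $\nabla^b\bp^\a$ act on the momentum equation in the form \eqref{for_vorticity} and take the $L^2$ inner product with $\d^{b+1}\nabla^b\bp^\a v$ carrying the appropriate power of $\alpha$. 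The principal pressure term is reorganized by Lemma \ref{cor1}: repeated normal differentiation of $\d^{-1}(\d^2 J^{-2}),_k$ produces (i) a divergence-form term which, after Lemma \ref{lemma_aenergy}, yields $\tfrac12\tfrac{d}{dt}\big(|\nabla_\eta\,\cdot\,|^2-|\operatorname{div}_\eta\,\cdot\,|^2-2|\operatorname{curl}_\eta\,\cdot\,|^2\big)$; (ii) the explicit $\delta_{kr_j}$ terms which, after integration by parts using $\d,_r=-\tfrac12 x,_r$ and $\d,_{r_1r_2}=-\tfrac12\delta_{r_1r_2}$ from \eqref{grad_d}, give \emph{favorably signed} lower-order quantities (the multidimensional analogue of the $+3\,\d^6|\p_x^5\eta|^2$ gain in the $1$-d computation); and (iii) commutators at least quadratic in $\mathscr{E}_2$. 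Since $\bp_i\d=0$ the tangentially differentiated pieces produce no boundary terms, and the few terms in which $\nabla\d$ is contracted against $\bp^\a\eta$ are controlled by Lemmas \ref{lemma_atan} and \ref{lemma_tan}. The $\alpha$-weights in $\mathscr{E}_2$ (the factor $\alpha^2$ on $v$ and $\alpha^{-1/2}$ on the top-order $\deta$) are chosen precisely so that the damping term which, with the naive weight, would be of anti-damping type is instead neutralized in $\tfrac{d}{dt}$, the remaining contributions carrying the good sign from $\dot\alpha(t)\ge 0$; all remainders are then bounded after a time integration using $\int_0^\infty\alpha^{-2}\,ds<\infty$, Hölder's inequality, and the Sobolev embedding theorem, giving a bound of the form $\mathscr{E}_2(0)+\sup_{[0,t]}\mathscr{P}(\mathscr{E}_2)$ for each essential term.

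\emph{Step 3:} Lemma \ref{lemma_curl} already supplies all the weighted $\operatorname{curl}\deta$ bounds in terms of $\mathscr{E}_2(0)+\sup_{[0,t]}\mathscr{P}(\mathscr{E}_2)$; combining these with the $\nabla_\eta$-energies of Step 2 through a weighted Hodge-type inequality $\|\nabla W\|_0^2\lesssim\|\operatorname{div}W\|_0^2+\|\operatorname{curl}W\|_0^2+(\text{tangential})$ promotes control of $\operatorname{div}_\eta$ and $\operatorname{curl}_\eta$ to control of all the weighted $\nabla^b\bp^\a\deta$ and $\alpha^2\nabla^b\bp^\a v$. We then reconstruct the remaining, non-essential terms of $\mathscr{E}_2$ --- the fractional $H^{3.5-\a/2}$ and $H^{4-\a/2}$ norms and the $W^{1,\infty}$ norms --- from the weighted embedding \eqref{w-embed}, the higher-order Hardy inequality (Lemma \ref{hardy}), and the Sobolev embedding theorem, exactly as in Step 3 of the one-dimensional proofs, obtaining $\sup_{[0,t]}\mathscr{E}_2(s)\lesssim\mathscr{E}_2(0)+\sup_{[0,t]}\mathscr{P}(\mathscr{E}_2(s))$ on $[0,T]$ with $T$ independent of $\kappa$. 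Since $\mathscr{P}$ is superlinear and $\mathscr{E}_2(0)\le\epsilon$, taking $\epsilon$ small and running the standard continuation argument gives $\mathscr{E}_2(t)\lesssim\epsilon$ for all $t\ge0$; in particular $\|\nabla\deta(\cdot,t)\|_{L^\infty}\lesssim\epsilon$, which justifies the running assumption \eqref{eta_bound2d}, hence \eqref{ss10}, \eqref{Jbound}, and the persistence of the physical vacuum condition.

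\emph{Step 4:} the uniform bounds yield compactness, so $\eta_\kappa\to\eta$ with $\eta$ a solution of \eqref{ceuler03D} satisfying $\mathscr{E}_2(t)\lesssim\epsilon$, and $\xi(x,t)=\alpha(t)\eta(x,t)$ solves \eqref{ceuler3d}. The term $\|\alpha^{-1/2}\deta(\cdot,t)\|_4^2$ in $\mathscr{E}_2$ gives $\eta(\cdot,t)\in H^4(\Omega_0)$, with continuity in $t$ from the energy identities, and the trace theorem then gives $\Gamma(t)=\alpha(t)\eta(\Gamma_0,t)\in H^{3.5}$ for all $t$. For uniqueness, if two solutions satisfy $\mathscr{E}_2^9\le C$, a low-order energy estimate for their difference closes by Grönwall's inequality, the $\mathscr{E}_2^9$-bound furnishing the $L^\infty$ control of the coefficients.

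\emph{Main obstacle.} The crux is the top-order energy estimate for $\gamma=2$ in three dimensions, which genuinely lies in the anti-damping regime $\gamma>1+\tfrac2d=\tfrac53$. One must simultaneously (a) choose the $\alpha$-weights so that the a priori anti-damping term is converted, via $\dot\alpha\ge0$, into a harmless or favorable contribution while every error term retains an integrable-in-time $\alpha$-factor, and (b) close the curl estimates of Lemma \ref{lemma_curl} in tandem, since the potential energy delivered by Lemma \ref{lemma_aenergy} controls $\nabla_\eta\eta$ only modulo $\operatorname{curl}_\eta\eta$, whose bound in turn feeds back on the weighted $v$-norms. Getting the $\d$-powers in the application of Lemma \ref{cor1} exactly right --- so that the favorably signed $\delta_{kr_j}$ terms land precisely in $\mathscr{E}_2$ rather than being borrowed from a higher level --- is the delicate technical heart of the argument.
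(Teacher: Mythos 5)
Your outline follows the paper's proof in all essential respects: mollification and local well-posedness from \cite{CoSh2012}, energy estimates via $\bp^\a\nabla^b$ differentiation of the momentum equation tested against $\d^{b+1}\bp^\a\nabla^b v$ with $\alpha$-weights chosen to defeat the anti-damping, Lemma~\ref{cor1} for normal derivatives, Lemma~\ref{lemma_aenergy} for the energy identity, Lemmas~\ref{lemma_atan} and~\ref{lemma_tan} for tangential reductions, Lemma~\ref{lemma_curl} for the curl deficit, and the weighted embeddings, Hardy inequality, continuation, and $\kappa\to0$ passage to conclude. Two small inaccuracies are worth flagging: the ``weighted Hodge-type inequality'' you invoke is redundant --- Lemma~\ref{lemma_aenergy} already places $|\nabla_\eta\cdot|^2$ directly in the energy, with the curl appearing only as a \emph{deficit} to be absorbed via Lemma~\ref{lemma_curl} --- and the explicit lower-order terms produced by Lemma~\ref{cor1} are \emph{not} favorably signed in 3-d in the way that $\d_{xx}<0$ arranges them in 1-d: in the paper they become either total time derivatives (using $v^i,_k a^k_i=\p_t J$) or, via $\d,_k=-\tfrac12 x_k$ and Lemmas~\ref{lemma_atan}--\ref{lemma_tan}, tangential-derivative terms controlled inductively by the $\bp^\a$-estimates established at the preceding order.
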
 
\begin{proof} 
{\it {Step 0. Outline of the proof.}}
The energy estimates and hence the proof of the theorem are established as follows: for $b=0,...,8$ and $\a=0,...,8-b$, we let the operator $\bp ^\a \nabla ^b$ act on  equation \eqref{momentum3d} (below), multiply by the test function
$\d^{b+1}\bp ^\a \nabla ^b v^i$ and integrate over $\Omega_0$.    As $b$ increases, the power on the weight $\d^{b+1}$ increases, and we use 
Lemma \ref{cor1} to ensure that integration-by-parts does not place any derivatives on the weight in the test function at the highest-order.   The formation 
of the fundamental energy
terms produces error terms containing derivatives of highest-order, but these errors terms are either of curl type or of tangential derivative type, and Lemmas
\ref{lemma_atan}, \ref{lemma_curl},  and \ref{lemma_tan} are used for their bounds.   The control of the remainder of the error terms simply requires the use of H\"{o}lder's inequality together
with weighted embeddings and the Sobolev embedding theorem.   Because the exact  embedding inequality may  change with the integers $b$ and $\a$, we detail
these estimates by starting with the zeroth-order energy and systematically increasing the derivative count until we reach the highest-order energy terms.

\vspace{.2in}
\noindent
{\it {Step 1. A smooth sequence of solutions.}}
We consider initial conditions (\ref{ceuler0}c) given by
\begin{equation}\label{smooth1ii}
(\eta, v)  =( e, u_0^\kappa) \ \  \ \  \text{ in } \Omega_0   \times \{t=0\}   \,,
\end{equation} 
where $u_0^\kappa$ is given by \eqref{u0kappa}.  Notice that \eqref{ceuler0} is equivalent to  \eqref{ce03d};  by the local-in-time well-posedness theorem 
 in \cite{CoSh2012}, using the smooth initial data \eqref{smooth1ii}, there exist a smooth solution $(\eta_\kappa, v_\kappa)$ on a short time-interval $[0,T]$ 
 (where $T$ depends on $\kappa$) such that for each $t \in [0,T]$,
 $\eta_\kappa( \cdot , t) \in H^9(\Omega_0)$,  $v_\kappa( \cdot , t) \in H^8(\Omega_0)$, and $\p_t v_\kappa( \cdot , t) \in H^7(\Omega_0)$.   We are thus able to consider higher-order energy estimates, as we can rigorously differentiate the sequence $(\eta_\kappa, v_\kappa)$ as many time as required.   Again, for notational
 clarity, we will drop the subscript $\kappa$ and simply write $\eta$ and $v$.

\vspace{.2in}
\noindent
{\it {Step 2.   Zeroth-order estimate.}}   
 We write the momentum equation (\ref{ceuler03D}) as
\begin{equation} 
\alpha^4\p_t v^i + 2\alpha ^3 \dot\alpha v^i +  \alpha ^{-1} \eta^i +\alpha^{-1}  \d ^{-1} a^k_i  \left(\d^2 J^{-2}\right),_k =0 \ \  \text{ in } \Omega_0   \times (0,T] \,.
\label{momentum3d}
\end{equation}

We begin with the zeroth-order  $L^2$ (physical)  energy estimate; testing \eqref{momentum3d} against $\d v^i$, we find that
\begin{align*} 
&{\frac{1}{2}}  \frac{d}{dt} \left( \| \alpha ^2 \d^{1/2} v\|_0^2 +     \| \alpha^{-1/2} \d^{1/2} \eta\|_0^2 
\right) 
+ {\frac{\dot\alpha}{2}} \|\alpha^{-1} \d^{1/2} \eta\|_0^2
-
 \alpha ^{-1}  \int_{\Omega_0} \d^2 J^{-2} v^i,_k \aki \, dx =0\,.
\end{align*} 
Since $v^i,_k \aki = \p_t J$, we have that
\begin{align*} 
&{\frac{1}{2}}  \frac{d}{dt} \left( \| \alpha ^2 \d^{1/2} v\|_0^2 +     \| \alpha^{-1/2} \d^{1/2} \eta\|_0^2 
\right) 
+ {\frac{\dot\alpha}{2}} \|\alpha^{-1} \d^{1/2} \eta\|_0^2
-
 \alpha ^{-1} \frac{d}{dt}  \int_{\Omega_0} \d^2 J^{-1}\, dx =0\,.
\end{align*} 
Commuting $\frac{d}{dt} $ with $ \alpha  ^{-1} $ and integrating in time from $0$ to $t$, we find that 
\begin{align*}
&
 \| \alpha ^2 \d^{1/2} v\|_0^2 +     \| \alpha^{-1/2} \d^{1/2} \eta\|_0^2 + \alpha ^{-1} \int_{\Omega_0} \d^2 J ^{-1} \, dx
 \lesssim \Ee(0)   \,.
 \end{align*} 
 Now, since 
 $$
\left|\deta (x,t)\right| = \left|\int_0^t  v(x,s) ds\right| \le \int_0^t\alpha^{-2}(s) ds  \sup_{s \in [0,t]} \alpha ^2(s) |v(x,s|
\lesssim  \sup_{s \in [0,t]} \alpha ^2(s) |v(x,s|  \,,
 $$
 it follows that
 \begin{align}
&\|  \d^{1/2} \deta \|_0^2
+ \| \alpha ^2 \d^{1/2} v\|_0^2 +     \| \alpha^{-1/2} \d^{1/2} \eta\|_0^2 +  \alpha ^{-1}  \int_{\Omega_0} \d^2 J ^{-1} \, dx
 \lesssim \Ee(0)   \,. \label{estd0}
 \end{align} 

 We next let tangential derivatives act on the momentum equation.  We first detail the case that only one tangential derivative enters the problem.
 Thus, we let  $\bp$ act on \eqref{momentum3d} and since
 $\bp_i \d=0$, we obtain that
\begin{align} 
&
 \alpha^4\bp v_t^i + 2\alpha ^3 \dot\alpha \bp v^i + \alpha ^{-1}  \bp\deta^i   
 +\alpha^{-1}  \d ^{-1} \bp a^k_i  \left(\d^2 J^{-2}\right),_k + \alpha^{-1}  \d ^{-1} a^k_i  \left(\d^2 \bp J^{-2}\right),_k =0 
  \,. \label{ss700}
\end{align} 
Testing \eqref{ss700} against $ \d \bp v^i$, we find that 
\begin{align*} 
&{\frac{1}{2}}  \frac{d}{dt} \left( \| \alpha ^2 \d^{1/2} \bp v\|_0^2 +     \| \alpha^{-1/2} \d^{1/2}\bp  \eta\|_0^2 
\right) 
+ {\frac{\dot\alpha}{2}} \|\alpha^{-1} \d^{1/2}\bp  \eta\|_0^2  + \mathcal{I} ^{0,\bp^1}_1 + \mathcal{I} ^{0,\bp^1}_2 =0 \,,
\end{align*} 
where
\begin{align*} 
\mathcal{I} ^{0,\bp^1}_1 =
- \alpha ^{-1}  \int_{\Omega_0} \d^2 J^{-2} \bp \aki \bp v^i,_k  \, dx \ \text{ and } \  \mathcal{I} ^{0,\bp^1}_2 = 
 -\alpha ^{-1}  \int_{\Omega_0} \d^2  \bp J^{-2}  \bp v^i,_k \aki \, dx \,.
\end{align*} 
Using the identity \eqref{a1} together with Lemma \ref{lemma_aenergy},
\begin{align*}
\mathcal{I} ^{0,\bp^1}_1
&=
\frac{\alpha^{-1} }{2} \int_{\Omega_0} \d^2 J ^{-1} \frac{d}{dt} \left( | \nabla _\eta \  \bp \eta|^2  - |\operatorname{div}  _\eta \  \bp \eta|^2
- 2|\operatorname{curl}  _\eta \  \bp \eta|^2
\right)\, dx \,,
\end{align*} 
and with \eqref{J1},
\begin{align*}
\mathcal{I} ^{0,\bp^1}_2
&= 2 \alpha ^{-1}  \int_{\Omega_0} \d^2 J^{-3} \bp \eta^r,_s a^s_r \ \bp v^i,_k  \aki \, dx
= \alpha^{-1} \int_{\Omega_0} \d^2 J^{-1} \frac{d}{dt}  | \operatorname{div} _\eta \ \bp \eta|^2\, dx
 \,.
\end{align*} 
Hence,
\begin{align*} 
&{\frac{1}{2}}  \frac{d}{dt} \left( \| \alpha ^2 \d^{1/2} \bp v\|_0^2 +     \| \alpha^{-1/2} \d^{1/2}\bp  \eta\|_0^2 
\right) 
+ {\frac{\dot\alpha}{2}} \|\alpha^{-1} \d^{1/2}\bp  \eta\|_0^2 \\
& \qquad
+
{\frac{\alpha^{-1} }{2}} \int_{\Omega_0} \d^2 J ^{-1} \frac{d}{dt} \left( | \nabla _\eta \  \bp \eta|^2  + |\operatorname{div}  _\eta \  \bp \eta|^2
- 2 |\operatorname{curl}  _\eta \  \bp \eta|^2
\right)\, dx =0
\,.
\end{align*} 
Commuting $ \frac{d}{dt} $ with $ \alpha  ^{-1} $ and integrating in time from $0$ to $t$, we then have that
\begin{align*} 
& \| \alpha ^2 \d^{1/2} \bp v\|_0^2 +     \| \alpha^{-1/2} \d^{1/2}\bp  \eta\|_0^2 
+
\alpha^{-1}  \int_{\Omega_0} \d^2  \left( | \nabla _\eta \  \bp \eta|^2  + |\operatorname{div}  _\eta \  \bp \eta|^2
\right)\, dx  \\
& \qquad
\lesssim \Ee(0) + \alpha^{-1}  \int_{\Omega_0} \d^2  | \operatorname{curl}  _\eta \  \bp \eta|^2  \, dx 
+ \int_0^t \alpha^{-2}\dot\alpha  \int_{\Omega_0} \d^2  | \operatorname{curl}  _\eta \  \bp \eta|^2  \, dx ds \\
& \qquad \qquad
+ \int_0^t \alpha^{-3} \int_{\Omega_0} J ^{-1}| \P(A)| \,  | \nabla \bp \eta |^2  \, \alpha ^2 |\nabla v|\, dx ds \,,
\end{align*} 
and thanks to Lemma \ref{lemma_curl} and the fact that
 $\left|\bp \deta (x,t)\right| 
\lesssim  \sup_{s \in [0,t]} \alpha ^2(s) |\bp v(x,s)|$,
\begin{align} 
&\|  \d^{1/2} \bp \deta\|_0^2 + \| \alpha ^2 \d^{1/2} \bp v\|_0^2 +     \| \alpha^{-1/2} \d^{1/2}\bp  \eta\|_0^2 
+     \| \alpha^{-1/2} \d \nabla_\eta \bp  \eta\|_0^2 +     \| \alpha^{-1/2} \d \operatorname{div} _\eta \bp  \eta\|_0^2  \nonumber
  \\
& \qquad
\lesssim \Ee(0) + \sup_{s \in [0,t]} \P( \Ee(s)) \,. \nonumber
\end{align} 
Finally,  the estimate \eqref{Abound} shows that we can replace $ \nabla _\eta \  \bp \eta$ above with $ \nabla  \  \bp \eta$:
\begin{align} 
&\|  \d^{1/2} \bp \deta\|_0^2 + \| \alpha ^2 \d^{1/2} \bp v\|_0^2 +     \| \alpha^{-1/2} \d^{1/2}\bp  \eta\|_0^2 
+     \| \alpha^{-1/2} \d \nabla \bp  \eta\|_0^2 +     \| \alpha^{-1/2} \d \operatorname{div} _\eta \bp  \eta\|_0^2  \nonumber
  \\
& \qquad
\lesssim \Ee(0) + \vartheta \sup_{s \in [0,t]}\Ee(s) +   \sup_{s \in [0,t]} \P( \Ee(s)) \,. \label{estd0bp}
\end{align} 
Notice that in terms of the derivative count, the difference
$
\nabla \bp \deta - \bp \nabla \deta$ scales like  $\nabla \deta $,
so that we also have that
\begin{align} 
&\|  \d^{1/2} \bp \deta\|_0^2 + \| \alpha ^2 \d^{1/2} \bp v\|_0^2 +     \| \alpha^{-1/2} \d^{1/2}\bp  \eta\|_0^2 
+     \| \alpha^{-1/2} \d \bp \nabla   \eta\|_0^2 +     \| \alpha^{-1/2} \d \operatorname{div} _\eta \bp  \eta\|_0^2  \nonumber
  \\
& \qquad
\lesssim \Ee(0) + \vartheta \sup_{s \in [0,t]}\Ee(s) +   \sup_{s \in [0,t]} \P( \Ee(s)) \,. \tag{\ref{estd0bp}'}
\end{align} 
We shall make use of the fact that we can commute gradients and tangential derivatives, modulo lower-order terms that have been estimated.

Using the same argument,  we can consider the $\bp^{\a}$-problem, for integers $a=2,..., 8$.   Using induction, assume that the $\bp^{\a-1}$-problem
 has been estimated.  Then, 
we let $\bp^{\a-1}$ act on \eqref{ss700}; it is thus only necessary to 
analyze the terms that contain the $\bp^{\a} $ derivatives,  {\it as  all other terms with at most $\bp^{\a-1}$ derivatives} can be easily  estimated by 
the $\bp^{\a-1}$-problem.
 Hence,  we let $\bp^{\a-1}$ act on \eqref{ss700} and test against $\d \bp^{\a} v^i$ to obtain
\begin{align*} 
&{\frac{1}{2}}  \frac{d}{dt} \left( \| \alpha ^2 \d^{1/2} \bp^{\a} v\|_0^2 +     \| \alpha^{-1/2} \d^{1/2}\bp^{\a}  \eta\|_0^2 
\right) 
+ {\frac{\dot\alpha}{2}} \|\alpha^{-1} \d^{1/2}\bp^{\a}  \eta\|_0^2  + \mathcal{I} ^{0,\bp^{\a}}_1 + \mathcal{I} ^{0,\bp^{\a}}_2  + \mathfrak{R}=0 \,,
\end{align*} 
where $\mathfrak{R} $ denotes a remainder consisting of lower-order terms estimated by the $\bp^{\a-1}$-problem and
\begin{align*} 
\mathcal{I} ^{0,\bp^{\a}}_1 =
 \alpha ^{-1}  \int_{\Omega_0} \left[\d^2 J^{-2} \bp^{\a} \aki \right],_k \ \bp^{\a} v^i \, dx \ \text{ and } \  \mathcal{I} ^{0,\bp^{\a}}_2 = 
 \alpha ^{-1}  \int_{\Omega_0} \left[ \d^2  \bp^{\a} J^{-2} \right],_k\, \  \bp^{\a}v^i\, \aki \, dx \,,
\end{align*} 
Using the identity \eqref{a1}, 
$$
\bp^\a \aki =\bp^{\a-1}\left( \bp\eta^j,_{s} J[A^s_j A^k_i - A^k_j A^s_i]  \right) 
= \bp^\a\eta^j,_{s} J[A^s_j A^k_i - A^k_j A^s_i]  + \mathfrak{r}\,,
$$
where $\mathfrak{r}=  \sum_{b=0}^{\a-1}  \bp^b \eta^j,_{s}  \bp^{\a-1-b} \left(J[A^s_j A^k_i - A^k_j A^s_i] \right)$.  Since the highest-order derivative in 
$\mathfrak{r}$ is given by $\bp^{\a-1}\nabla \eta$ and this term has been estimated by the $\bp^{\a-1}$-problem, we need only consider the integral
\begin{align*} 
\mathcal{I} ^{0,\bp^{\a}}_{1,\operatorname{high}} 
&=
 \alpha ^{-1}  \int_{\Omega_0} \left[\d^2 J^{-2} \ \bp^\a\eta^j,_{s} J(A^s_j A^k_i - A^k_j A^s_i)\right],_k \ \bp^{\a} v^i \, dx \\
 &=
- \alpha ^{-1}  \int_{\Omega_0} \d^2 J^{-2} \ \bp^\a\eta^j,_{s} J[A^s_j A^k_i - A^k_j A^s_i]  \ \bp^{\a} v^i,_k \, dx \,.
\end{align*} 
By  Lemma \ref{lemma_aenergy},
\begin{align*}
\mathcal{I} ^{0,\bp^{\a}}_{1,\operatorname{high}} 
&=
\frac{\alpha^{-1} }{2} \int_{\Omega_0} \d^2 J ^{-1} \frac{d}{dt} \left( | \nabla _\eta \  \bp^\a \eta|^2  - |\operatorname{div}  _\eta \  \bp^\a \eta|^2
- 2 |\operatorname{curl}  _\eta \  \bp^\a \eta|^2
\right)\, dx \,.
\end{align*} 
Then, for the integral $ \mathcal{I} ^{0,\bp^{\a}}_2 $, we expand $\bp^\a J^{-2}$ and by the same argument used for the integral $ \mathcal{I} ^{0,\bp^{\a}}_1 $,
we need only estimate the highest-order term
\begin{align*}
\mathcal{I} ^{0,\bp^1}_{2,\operatorname{high}} 
&= 2 \alpha ^{-1}  \int_{\Omega_0} \d^2 J^{-3} \bp^\a \eta^r,_s a^s_r \ \bp^\a v^i,_k  \aki \, dx
= \alpha^{-1} \int_{\Omega_0} \d^2 J^{-1} \frac{d}{dt}  | \operatorname{div} _\eta \ \bp^\a \eta|^2\, dx
 \,.
\end{align*} 

Now, we use the fact that $ \operatorname{curl} \eta =  \operatorname{curl} \deta$ and that
$$
\operatorname{curl} _\deta \, \bp^\a \eta = \operatorname{curl} \bp^\a\deta + \nabla \bp^\a \deta \ (A - \operatorname{Id} ) \,,
$$
so that with \eqref{Abound} and  Lemma \ref{lemma_curl},
$$
\| \ah \d  \operatorname{curl} _\eta \, \bp^8 \deta \|  \lesssim \Ee(0) + \vartheta \sup_{s \in [0,t]}\Ee(s) +   \sup_{s \in [0,t]} \P( \Ee(s)) \ \ \text{for} \ \a=1,2,3,4,5,6,7,8
 \,.
$$
Hence, 
the sum of the two integrals $\mathcal{I} ^{0,\bp^1}_{1,\operatorname{high}} + \mathcal{I} ^{0,\bp^1}_{2,\operatorname{high}}$,
after time integration,  provide the energy terms in the same way as for the $\bp$-problem, and we obtain that  
\begin{align} 
&\|  \d^{1/2} \bp^{\a} \deta\|_0^2 + \| \alpha ^2 \d^{1/2} \bp^{\a} v\|_0^2 +     \| \alpha^{-1/2} \d^{1/2}\bp^{\a}  \eta\|_0^2 
+     \| \alpha^{-1/2} \d \nabla \bp^{\a}  \eta\|_0^2 +     \| \alpha^{-1/2} \d \operatorname{div} _\eta \bp^{\a}  \eta\|_0^2  \nonumber
  \\
& \qquad
\lesssim \Ee(0) + \vartheta \sup_{s \in [0,t]}\Ee(s) +   \sup_{s \in [0,t]} \P( \Ee(s)) \ \ \text{for} \ \a=1,2,3,4,5,6,7,8
 \,. \label{estd0bpa}
\end{align}

 \vspace{.2in}
\noindent
{\it Step 3. First-order estimate.}   
 We next  let $ \nabla = \p_{r_1}$ act on \eqref{momentum3d} and write this as
 \begin{align} 
 \alpha^4\p_t v^i,_{r_1} + 2\alpha ^3 \dot\alpha v^i,_{r_1} + \alpha ^{-1}  \eta^i ,_{r_1}  
+ \alpha^{-1} \left[ \aki  \d ^{-1} ( \d^2 J^{-2}),_k\right],_{r_1}
 =0  \,.
\label{d1momentum3d}
\end{align} 
The analysis of this equation is similar to that of the $\bp$-problem, but we must contend with the fact that $ \nabla \d \neq 0$.  

Using the product rule, Lemma \ref{cor1},  and \eqref{grad_d}, we have that
\begin{align} 
\left[ \aki  \d ^{-1} ( \d^2 J^{-2}),_k\right],_{r_1}
&
=\aki, _{r_1}   \d ^{-1} ( \d^2 J^{-2}),_k 
+
\aki \d^{-2} \left( \d^3 J^{-2},_{r_1}\right),_k \nonumber \\
& \qquad\qquad
- a^{r_1}_i J^{-2}  
+\left[ \d,_{r_1}  J^{-2},_{ k} -  \d,_k  J^{-2},_{r_1 } \right] \aki \nonumber \\
&
=\aki, _{r_1}   \d ^{-2} ( \d^3 J^{-2}),_k
+
\aki \d^{-2} \left( \d^3 J^{-2},_{r_1}\right),_k \nonumber \\
& \qquad\qquad
- a^{r_1}_i J^{-2}   + {\frac{1}{2}} x_k \aki,_{r_1} J^{-2}
+\left[ \d,_{r_1}  J^{-2},_{ k} -  \d,_k  J^{-2},_{r_1 } \right] \aki\,. \label{ss1001}
\end{align} 
Testing \eqref{d1momentum3d} with $\d^2  v^i,_{r_1}$  yields
\begin{align} 
&{\frac{1}{2}}  \frac{d}{dt} \left( \| \alpha ^2 \d\nabla  v\|_0^2 +     \| \alpha^{-1/2} \d \nabla \eta\|_0^2 
\right) 
+  {\frac{\dot\alpha}{2}} \|\alpha^{-1} \d\nabla \eta\|_0^2  + \sum_{j=1}^5  \mathcal{I} ^1_j  
=0\,, \label{ss1000}
\end{align} 
where
\begin{alignat*}{2}
\mathcal{I}^1_1 & = -\alpha^{-1}\int_{\Omega_0}  \d^3 J^{-2}\aki, _{r_1}  v^i,_{kr_1}\, dx \,, \ \ 
&&\mathcal{I}^1_2  = -\alpha^{-1}\int_{\Omega_0}  \d^3 J^{-2}, _{r_1} v^i,_{kr_1} \aki\, dx \,,\\
\mathcal{I}^1_3 & = -\alpha^{-1}\int_{\Omega_0}  \d^2 J^{-2}     v^i,_{r_1} a^{r_1}_i \, dx\,,  \  \
&&
\mathcal{I}^1_4  =  {\frac{\alpha^{-1}}{2}} \int_{\Omega_0}  \d^2 x_k \, \aki,_{r_1} J^{-2}    v^i,_{r_1}\, dx\,, \\
\mathcal{I}^1_5 & = \alpha^{-1}\int_{\Omega_0}  \d^2 \left[ \d,_{r_1}  J^{-2},_{ k} -  \d,_k  J^{-2},_{r_1 } \right] \aki    v^i,_{r_1}\, dx \,.
\end{alignat*} 
Identity \eqref{a1} together with Lemma \ref{lemma_aenergy} shows that
\begin{align*}
 \mathcal{I}^1_1
&=
\frac{\alpha^{-1} }{2} \int_{\Omega_0} \d^3 J ^{-1} \frac{d}{dt} \left( | \nabla _\eta \  \nabla \eta|^2  - |\operatorname{div}  _\eta \  \nabla \eta|^2
- 2 |\operatorname{curl}  _\eta \  \nabla \eta|^2
\right)\, dx \,,
\end{align*} 
and \eqref{J1} shows that
\begin{align*}
\mathcal{I} ^1_2
&= 2 \alpha ^{-1}  \int_{\Omega_0} \d^3 J^{-3} \nabla \eta^r,_s a^s_r \ \nabla v^i,_k  \aki \, dx
= \alpha^{-1} \int_{\Omega_0} \d^3 J^{-1} \frac{d}{dt}  | \operatorname{div} _\eta \ \nabla \eta|^2\, dx
 \,.
\end{align*} 
Hence,
\begin{align*} 
 \mathcal{I}^1_1 + \mathcal{I} ^1_2 = \frac{\alpha^{-1} }{2} \int_{\Omega_0} \d^3 J ^{-1} \frac{d}{dt} \left( | \nabla _\eta \  \nabla \eta|^2  + |\operatorname{div}  _\eta \  \nabla \eta|^2
-  2 |\operatorname{curl}  _\eta \  \nabla \eta|^2
\right)\, dx \,.
\end{align*}

Again using \eqref{J1}, we have that
$$
\mathcal{I}^1_3  = -\alpha^{-1}\int_{\Omega_0}  \d^2 J^{-2}   \p_t J \, dx =  \alpha ^{-1}  \frac{d}{dt} \int_{\Omega_0}  \d^2 J ^{-1} \, dx \,.
$$
The estimates for the integrals $ \mathcal{I} ^1_4$ and  $ \mathcal{I} ^1_5$ rely on the tangential derivative estimates obtained in Step 2;
in particular, both of these integrals  have derivatives of highest-order, but of tangential derivative type.  Hence,
using Lemma \ref{lemma_atan},
$$
\mathcal{I}^1_4  =  {\frac{\alpha^{-3}}{2}} \int_{\Omega_0}  \d^2 \nabla  \bp \eta \, \bp \eta \, J^{-2}\alpha ^2  \nabla  v\, dx\,.
$$
Note that \eqref{estd0bp} gives us control of $\alpha^{-1/2} \d \nabla \bp  \eta( \cdot  ,t)$ in $L^2(\Omega_0)$ while $\alpha ^2 \d \nabla V$ in $L^2(\Omega_0)$
appears on the right-hand side of \eqref{ss1000}. Next, we use Lemma \ref{lemma_tan} and write $ \mathcal{I} ^1_5$ as
\begin{align*} 
\mathcal{I}^1_5  = \frac{\alpha^{-1}}{2}\int_{\Omega_0}  \d^2 \bp J^{-2} \widetilde{ a \, \nabla v}\, dx 
=- \frac{\alpha^{-1}}{2}\int_{\Omega_0}  \d^2  J^{-2} \operatorname{div} _\eta  \bp \eta \  \widetilde{ a \, \nabla v}\, dx \,,
\end{align*} 
and  we will again use the fact that \eqref{estd0bp} gives us control of $\alpha^{-1/2} \d \operatorname{div} _\eta \bp  \eta( \cdot  ,t)$ in $L^2(\Omega_0)$.
We integrate equation \eqref{ss1000} in time from $0$ to $t$, commute $\frac{d}{dt} $ with $\alpha  ^{-1} $, and find that
\begin{align*} 
& \| \alpha ^2 \d\nabla v\|_0^2 +     \| \alpha^{-1/2} \d \nabla   \eta\|_0^2 
+
\alpha^{-1}  \int_{\Omega_0} \d^3  \left( | \nabla _\eta \ \nabla  \eta|^2  + |\operatorname{div}  _\eta \  \nabla  \eta|^2
\right)\, dx +  \frac{d}{dt} \alpha ^{-1} \int_{\Omega_0}  \d^2 J ^{-1} \, dx \\
& \qquad
\lesssim \Ee(0) + \alpha^{-1}  \int_{\Omega_0} \d^2  | \operatorname{curl}  _\eta \ \nabla  \eta|^2  \, dx 
+ \int_0^t \alpha^{-2}\dot\alpha  \int_{\Omega_0} \d^2  | \operatorname{curl}  _\eta \  \nabla  \eta|^2  \, dx ds \\
& \qquad \qquad
+ \int_0^t \alpha^{-3} \int_{\Omega_0} J ^{-1}| \P(A)| \,  | \nabla^2 \eta |^2  \, \alpha ^2 |\nabla v|\, dx ds
+ \int_0^t  {\frac{\alpha^{-3}}{2}} \int_{\Omega_0}  \d^2 | \nabla  \bp \eta| \, |\bp \eta| \, J^{-2}\,  | \alpha ^2  \nabla  v|\, dxds \\
& \qquad \qquad
- \int_0^t \alpha^{-3}\int_{\Omega_0}  \d^2  J^{-1}| \operatorname{div} _\eta  \bp \eta| \ |A| \,| \nabla v| \, dx ds
 \,,
\end{align*} 
which, thanks to Lemma \ref{lemma_curl} and \eqref{estd0bp}, shows that
\begin{align*} 
& \| \alpha ^2 \d\nabla v\|_0^2 +     \| \alpha^{-1/2} \d \nabla   \eta\|_0^2 
+
\| \alpha ^{-1/2} \d^{3/2}   \nabla _\eta \ \nabla  \eta\|_0^2  + \| \alpha ^{-1/2} \d^{3/2}  \operatorname{div}  _\eta \ \nabla  \eta\|_0^2  
\\
& \qquad \lesssim \Ee(0) + \vartheta \sup_{s \in [0,t]}\Ee(s) +   \sup_{s \in [0,t]} \P( \Ee(s))
 \,.
\end{align*} 
Then,  since $\left|\nabla \deta (x,t)\right| 
\lesssim  \sup_{s \in [0,t]} \alpha ^2(s) |\nabla v(x,s)|$, and using \eqref{Abound}, we obtain that
\begin{align} 
& \| \alpha ^2 \d\nabla \deta \|_0^2  + \| \alpha ^2 \d\nabla v\|_0^2 +     \| \alpha^{-1/2} \d \nabla   \eta\|_0^2 
+
\| \alpha ^{-1/2} \d^{3/2}   \nabla ^2 \eta\|_0^2  + \| \alpha ^{-1/2} \d^{3/2}  \operatorname{div}  _\eta \ \nabla  \eta\|_0^2   \nonumber
\\
& \qquad \lesssim \Ee(0) + \vartheta \sup_{s \in [0,t]}\Ee(s) +   \sup_{s \in [0,t]} \P ( \Ee(s))
 \,.
 \label{estd1}
\end{align} 

We next study the tangential derivative $\bp^\a$ problem for $ a=1,..., 7$.   We assume that the $\bp^{\a-1}$ problem has been estimated, and
let $\bp^{\a}$ act on \eqref{d1momentum3d} and use \eqref{ss1001} to find that
  \begin{align*} 
  &
 \alpha^4\p_t \bp^\a v^i,_{r_1} + 2\alpha ^3 \dot\alpha \bp^{\a} v^i,_{r_1} + \alpha ^{-1} \bp^{\a}  \eta^i ,_{r_1}  
+ \alpha^{-1}\bp^{\a} \aki, _{r_1}   \d ^{-2} ( \d^3 J^{-2}),_k \\
& \qquad \qquad  
 + \aki \d^{-2} \left( \d^3 \bp^{\a} J^{-2},_{r_1}\right),_k  - \bp^{\a} a^{r_1}_i J^{-2}  - a^{r_1}_i \bp^{\a} J^{-2}  \\
& \qquad\qquad
 + {\frac{1}{2}} x_k \bp^\a \aki,_{r_1} J^{-2} + {\frac{1}{2}} x_k \aki,_{r_1} \bp^\a J^{-2} 
+\left[ \d,_{r_1}  \bp^\a J^{-2},_{ k} -  \d,_k  \bp^\a  J^{-2},_{r_1 } \right] \aki + \mathfrak{R}
 =0  \,,
\end{align*} 
where $\mathfrak{R}$ denotes a remainder consisting of lower-order terms with at most $\nabla \bp^{\a-1}$ derivatives.
 We test this equation against $\bp^a v^i,_{r_1}$ and obtain that
\begin{align} 
&{\frac{1}{2}}  \frac{d}{dt} \left( \| \alpha ^2 \d\bp^\a \nabla  v\|_0^2 +     \| \alpha^{-1/2} \d \bp^\a \nabla \eta\|_0^2 
\right) 
+  {\frac{\dot\alpha}{2}} \|\alpha^{-1} \d \bp^\a  \nabla \eta\|_0^2  + \sum_{j=1}^5  \mathcal{I}^{1,\bp^\a}_j  + \mathfrak{R} 
=0\,, \nonumber
\end{align} 
where again $\mathfrak{R} $ denotes a remainder consisting of lower-order terms and 
\begin{align*}
\mathcal{I}^{1,\bp^\a}_1 & =- \alpha^{-1}\int_{\Omega_0}  \d^3 J^{-2} \bp^\a \aki, _{r_1} \  \bp^\a v^i,_{kr_1}\, dx \,,\\
\mathcal{I}^{1,\bp^\a}_2 & =- \alpha^{-1}\int_{\Omega_0}  \d^3 \bp^\a  (J^{-2}), _{r_1}   \ \bp^\a v^i,_{kr_1} \aki\, dx \,,\\
\mathcal{I}^{1,\bp^\a}_3 & = -\alpha^{-1}\int_{\Omega_0}  \d^2 J^{-2}    \bp^\a  v^i,_{r_1}\ \bp^\a a^{r_1}_i \, dx\,, \\
\mathcal{I}^{1,\bp^\a}_4 & =  {\frac{\alpha^{-1}}{2}} \int_{\Omega_0}  \d^2  x_k \, \bp^\a \aki,_{r_1} J^{-2}    \bp^\a v^i,_{r_1}\, dx\,, \\
\mathcal{I}^{1,\bp^\a}_5 & = \alpha^{-1}\int_{\Omega_0}  \d^2 \left[ \d,_{r_1} \bp^\a  J^{-2},_{ k} -  \d,_k  \bp^\a J^{-2},_{r_1 } \right] \aki  \  \bp^\a v^i,_{r_1}\, dx \,.
\end{align*} 
The integrals $ \mathcal{I}^{1,\bp^\a}_1 + \mathcal{I}^{1,\bp^\a}_2 $ are estimated in the identical fashion  as 
$ \mathcal{I}^{0,\bp^\a}_1+  \mathcal{I}^{0,\bp^\a}_2$, while
$\mathcal{I}^{1,\bp^\a}_3 $ is estimated just as $\mathcal{I} ^{0,\bp^\a}_1$.  Now the integral $\mathcal{I}^{1,\bp^\a}_4 $ is estimated using Lemma \ref{lemma_atan}
which shows that the highest-order term in that integrand can be written as
$$
 \alpha^{-2.5} \int_{\Omega_0}  \alpha ^{-1/2} \d^2 \nabla  \bp^{\a+1} \eta \, \bp \eta \, J^{-2} \alpha ^2  \bp^\a \nabla  v \,dx
$$
which is indeed estimated using the fact that for $\a=0,...,6$,  $ \alpha ^{-1/2} \d \nabla  \bp^{\a+1} \eta$ is in $L^2(\Omega_0)$ by  \eqref{estd0bpa}.   
Similarly, $\mathcal{I}^{1,\bp^\a}_5$
is estimated using Lemma \ref{lemma_tan}, and with Lemma \ref{lemma_curl},  we find that 
\begin{align} 
& \|  \d\nabla\bp^\a \deta \|_0^2  + \| \alpha ^2 \d\nabla \bp^\a v\|_0^2 +     \| \alpha^{-1/2} \d \nabla  \bp^\a \eta\|_0^2 
+
\| \alpha ^{-1/2} \d^{3/2}   \nabla ^2 \bp^\a \eta\|_0^2  + \| \alpha ^{-1/2} \d^{3/2}  \operatorname{div}  _\eta \ \nabla \bp^\a \eta\|_0^2   \nonumber
\\
& \qquad \lesssim \Ee(0) + \vartheta \sup_{s \in [0,t]}\Ee(s) +   \sup_{s \in [0,t]} \P( \Ee(s)) \ \ \text{for} \ \a=0,1,2,3,4,5,6,7
 \,.
 \label{estd1bpa}
\end{align}

\vspace{.2in}
\noindent
{\it Step 4. Second-order  estimate.} For the second-order energy estimate, we let $\nabla ^2:=\p_{r_1r_2}$ act on 
(\ref{ceuler03D}b) and obtain the equation
\begin{align} 
\alpha^4\p_t v^i,_{r_1r_2} + 2\alpha ^4 \dot\alpha v^i ,_{r_1r_2}+\alpha ^{-1}  \eta^i ,_{r_1r_2}+\alpha ^{-1}  \left[a^k_i \d ^{-1} \left(\d^2 J^{-2} \right),_k\right],_{r_1r_2} =0  \,.
\label{d2momentum3d}
\end{align} 
Now, using the product rule and Lemma \ref{cor1},
\begin{align*} 
& \left[ \aki\d ^{-1} \left( \d^2 J^{-2}\right),_k\right],_{r_1 r_2}  \\
&\ \ \ = \aki \ \d ^{-3} \left[ \d^4 (J^{-2}),_{r_1 r_2} \right],_k  + \aki,_{r_1 r_2} \d^{-3} \left[ \d^4 J^{-2}\right],_k  \\
&\qquad  -2 \d,_k \aki,_{r_1 r_2}  J^{-2}  
- {\frac{3}{2}} a^{r_2}_i (J^{-2}),_{r_1} -   a^{r_1}_i (J^{-2}),_{r_2}  \\
&\qquad -\left[ \d,_k (J^{-2}),_{r_1r_2}  - \d,_{r_2} (J^{-2}),_{r_1k} \right] \aki-\left[ \d,_k (J^{-2}),_{r_1}  - \d,_{r_1} (J^{-2}),_{k} \right],_{r_2} \aki \\
&\qquad + \aki,_{ r_2} \d^{-2} \left[ \d^3 (J^{-2}),_{r_1} \right],_k  + a^{r_1}_i,_{r_2} J^{-2}  + \aki,_{r_2} \left[ \d,_k (J^{-2}),_{r_1}
- \d,_{r_1} (J^{-2}),_k\right] \\
&\qquad + \aki,_{ r_1} \d^{-2} \left[ \d^3 (J^{-2}),_{r_2} \right],_k  + a^{r_2}_i,_{r_1} J^{-2}  + \aki,_{r_1} \left[ \d,_k (J^{-2}),_{r_2}
- \d,_{r_2} (J^{-2}),_k\right] \,.
\end{align*} 
Thus, testing \eqref{d2momentum3d} against $\d^3 v^i,_{r_1r_2}$, we have that
\begin{align} 
&{\frac{1}{2}}  \frac{d}{dt} \left( \| \alpha ^2 \d^{3/2}\nabla^2  v\|_0^2 +     \| \alpha^{-1/2} \d^{3/2} \nabla^2 \deta\|_0^2 
\right) 
+ {\frac{\dot\alpha}{2}} \|\alpha^{-1} \d^{3/2} \nabla^2 \deta\|_0^2  + \sum_{j=1}^7 \mathcal{I}^2 _j =0 \,, \label{ss80}
\end{align} 
where
\begin{align*} 
\mathcal{I}^2 _1 & =  \alpha^{-1} \int_{\Omega_0} \left[ \d^4 (J^{-2}),_{r_1 r_2}\right],_k v^i,_{r_1 r_2 } \aki\, dx \,, \\
\mathcal{I}^2 _2 & =  - \alpha^{-1} \int_{\Omega_0} \left[ \d^4 J^{-2} \aki,_{r_1 r_2}\right],_k  v^i,_{r_1 r_2} \, dx \,, \\
\mathcal{I}^2 _3 & = \alpha^{-1} \int_{\Omega_0} \d^3\, x_k \    \aki,_{r_1 r_2}  J^{-2}   v^i,_{r_1 r_2 } \, dx \,,  \\
\mathcal{I} ^2_4 & = - \alpha^{-1}\int_{\Omega_0} \d^3 \left\{  
\left[ \d,_k (J^{-2}),_{r_1r_2}  - \d,_{r_2} (J^{-2}),_{r_1k} \right] +   \left[ \d,_k (J^{-2}),_{r_1}  - \d,_{r_1} (J^{-2}),_{k} \right],_{r_2}
\right\}  \aki  v^i,_{r_1 r_2 } \, dx \,, \\
\mathcal{I} ^2_5 & = \alpha^{-1}\int_{\Omega_0} \d^3 \left[ - {\frac{3}{2}} a^{r_2}_i (J^{-2}),_{r_1} -  a^{r_1}_i (J^{-2}),_{r_2}
+a^{r_1}_i,_{r_2} J^{-2} + a^{r_2}_i,_{r_1} J^{-2}\right] v^i,_{r_1 r_2 } \, dx \,, \\
\mathcal{I} ^2_6 & =  \alpha^{-1}\int_{\Omega_0} \d^3  
\left\{ \left[ \d,_k (J^{-2}),_{r_1} - \d,_{r_1} (J^{-2}),_k\right]  \aki,_{r_2}    + \left[ \d,_k (J^{-2}),_{r_2} - \d,_{r_2} (J^{-2}),_k\right]  \aki,_{r_1} \right\} 
 v^i,_{r_1 r_2}   dx  \,, \\
 \mathcal{I} ^2_7 & = \alpha^{-1}\int_{\Omega_0} \d^3  
\left\{
\aki,_{ r_2} \d^{-2} \left[ \d^3 (J^{-2}),_{r_1} \right],_k + \aki,_{ r_1} \d^{-2} \left[ \d^3 (J^{-2}),_{r_2} \right],_k
 \right\} 
 v^i,_{r_1 r_2}   dx \,.
\end{align*} 
For $j=1,2,3,4$, the 
integrals $ \mathcal{I}^2 _j$ have the highest derivative count, having three derivatives on $\eta$, while the
integrals $ \mathcal{I} _j$, $j=5,6,7$ are lower-order and are estimated using \eqref{estd1bpa}.

  We begin
with $ \mathcal{I} ^2_1$ and $ \mathcal{I}^2 _2$:
\begin{align*} 
\mathcal{I} ^2_1 & = 2 \alpha^{-1} \int_{\Omega_0} \d^4  J^{-1 } \eta^l,_{j r_1 r_2} A^j_l \,  v^i,_{r_1 r_2 k} A^k_i \, dx
-2 \alpha^{-1} \int_{\Omega_0} \left[ \d^4  \left( J^{-2 } A^j_l \right),_{r_2}  \eta^l,_{j  r_1}\right],_k \,  v^i,_{r_1 r_2 } A^k_i \, dx \\
& =  \alpha^{-1} \frac{d}{dt} \int_{\Omega_0} \d^4  J^{-1 } | \operatorname{div} _\eta \eta,_{r_1 r_2}  |^2 \, dx
-
 \alpha^{-1} \int_{\Omega_0} \d^4  \p_t\left( J^{-1 } A^j_l  A^k_i \right)  \eta^l,_{j r_1 r_2}  \,  \eta^i,_{r_1 r_2 k}  \, dx \\
 & \qquad -2 \alpha^{-1} \int_{\Omega_0} \left[ \d^4  \left( J^{-2 } A^j_l \right),_{r_2}  \eta^l,_{j  r_1}\right],_k \,  v^i,_{r_1 r_2 } A^k_i \, dx \,.
\end{align*} 

We write
\begin{align*} 
\d^4 \p_t\left( J^{-1 } A^j_l  A^k_i \right)  \eta^l,_{j r_1 r_2}  \,  \eta^i,_{r_1 r_2 k}  
& \sim \d^4 J^{-1 } \P( A)\, \nabla v\,  \nabla^3 \eta \, \nabla^3 \eta  \,, \\
-2 \left[ \d^4  \left( J^{-2 } A^j_l \right),_{r_2}  \eta^l,_{j  r_1}\right],_k \,  v^i,_{r_1 r_2 } 
&\sim  \d^4 \P(J^{-1} A) \left[ \nabla ^3 \eta \ \nabla ^2\eta + \nabla ^2\eta \ \nabla ^2\eta \ \nabla ^2\eta\right] \nabla ^2 v\,,
\end{align*} 
and then integrate $ \mathcal{I} ^2_1$ in time from $0$ to $t$ to find that\footnote{We note that in the term $\operatorname{div} _\eta \nabla ^2 \eta $, the operator $ \operatorname{div} _\eta$ acts on each component of the Hessian $
\nabla ^2 \eta $.  In particular $ |\operatorname{div} _\eta \ \nabla ^2 \eta|^2 = \operatorname{div} _\eta \eta^i,_{r_1 r_2}  
\,  \operatorname{div} _\eta\eta^i,_{r_1 r_2}$.  }
\begin{align*} 
\int_0^t \mathcal{I} ^2_1(s) ds
& =  \alpha^{-1}  \int_{\Omega_0} \d^4  J^{-1 } | \operatorname{div} _\eta \nabla ^2 \eta  |^2 \, dx 
+
\int_0^t \alpha^{-2} \dot\alpha \int_{\Omega_0} \d^4  J^{-1 } | \operatorname{div} _\eta \nabla ^2 \eta  |^2 \, dxds \\
&
+ \int_0^t  \alpha^{-2} \int_{\Omega_0} \d^4  J^{-1 } \P( A)\,  \alpha ^2 \nabla v\,   \alpha ^{-1/2} \nabla^3 \eta \,  \alpha ^{-1/2}  \nabla^3 \eta  \, dxds \\
&
+ \int_0^t  \alpha^{-2.5} \int_{\Omega_0} \d^4 \P(J^{-1} A) \left[  \alpha ^{-1/2} \nabla ^3 \eta \ \nabla ^2\eta +   \alpha ^{-1/2} \nabla ^2\eta \ \nabla ^2\eta \ \nabla ^2\eta\right]\, \alpha ^2 \nabla ^2 v \, dxds - \Ee(0) \,.
\end{align*} 
Similarly, 
\begin{align*} 
\mathcal{I}^2 _2 & =  - \alpha^{-1} \int_{\Omega_0} \d^4 J^{-1} \eta^j,_{sr_1 r_2} (A^s_{r_1} A^k_i - A^s_i A^k_{r_1}) v^i,_{r_1 r_2 k} \, dx  \\
&\qquad\qquad
 + \alpha^{-1} \int_{\Omega_0}  \d^4 \P(J^{-1} A) \left[ \nabla ^3 \eta \ \nabla ^2\eta + \nabla ^2\eta \ \nabla ^2\eta \ \nabla ^2\eta\right] \nabla ^2 v \, dx \,.
\end{align*} 
By Lemma \ref{lemma_aenergy},
$$
 -  \eta^j,_{sr_1 r_2} (A^s_{r_1} A^k_i - A^s_i A^k_{r_1}) v^i,_{r_1 r_2 k} ={\frac{1}{2}}   \frac{d}{dt} \left( |  \nabla_\eta \  \eta,_{r_1r_2} |^2 
- | \operatorname{div}_\eta \eta ,_{r_1r_2}  |^2 - 2| \operatorname{curl}_\eta \eta,_{r_1r_2}  |^2\right)  \,,
$$
and hence
\begin{align*} 
\mathcal{I} ^2_2 & =   \frac{\alpha^{-1}}{2} \frac{d}{dt}  \int_{\Omega_0} \d^4 J^{-1}
\left(  |  \nabla_\eta \nabla \eta |^2 
- | \operatorname{div}_\eta  \nabla^2 \eta |^2     
-2 | \operatorname{curl}_\eta \nabla^2 \eta |^2 \right) \, dx  \\
 & \qquad \qquad
 +  \alpha^{-1} \int_{\Omega_0}  \d^4 \P(J^{-1} A) \left[\nabla ^3 \eta \, \nabla ^3\eta\, \nabla v+ \left(\nabla ^3 \eta \, \nabla ^2\eta + \nabla ^2\,\eta \, \nabla ^2\eta \, \nabla ^2\eta\right)\nabla ^2 v  \right] \, dx \\
&=  \frac{d}{dt}  \frac{\alpha^{-1}}{2}  \int_{\Omega_0} \d^4 J^{-1}
\left(  |  \nabla_\eta \nabla^2  \eta |^2 - | \operatorname{div}_\eta \nabla ^2 \eta   |^2 - 2| \operatorname{curl}_\eta \nabla ^2\eta |^2
\right) \, dx  \\
&\qquad  \qquad
+ \frac{\alpha^{-2}\dot\alpha}{2}  \int_{\Omega_0} \d^4 J^{-1}
\left(  |  \nabla_\eta  \nabla ^2 \eta |^2 - | \operatorname{div}_\eta  \nabla^2\eta    |^2 - | \operatorname{curl}_\eta  \nabla^2 \eta  |^2
\right) \, dx  \\
& \qquad \qquad
 +  \alpha^{-1} \int_{\Omega_0}  \d^4 \P(J^{-1} A) \left[\nabla ^3 \eta \, \nabla ^3\eta\, \nabla v+ \left(\nabla ^3 \eta \, \nabla ^2\eta + \nabla ^2\,\eta \, \nabla ^2\eta \, \nabla ^2\eta\right)\nabla ^2 v  \right] \, dx  \,.
\end{align*} 
Integrating in time from $0$ to $t$, we find that
\begin{align} 
&\int_0^t \left( \mathcal{I}^2 _1 (s)+ \mathcal{I} ^2_2(s)\right) ds 
 =  \frac{\alpha^{-1}}{2}  \int_{\Omega_0} \d^4 J^{-1}
\left(  |  \nabla_\eta \nabla^2  \eta |^2  +  | \operatorname{div}_\eta \nabla ^2 \eta  |^2 -  2| \operatorname{curl}_\eta \nabla^2\eta   |^2 \right) \, dx\nonumber  \\
&\
+ \int_0^t  \frac{\alpha^{-2}\dot\alpha}{2}  \int_{\Omega_0} \d^4 J^{-1}
\left(  |  \nabla_\eta \nabla^2  \eta |^2 +  | \operatorname{div}_\eta\nabla^2 \eta   |^2  - 2 | \operatorname{curl}_\eta \nabla^2 \eta   |^2 
\right) \, dxds  \nonumber \\
&\
+ \int_0^t  \alpha^{-2}  \int_{\Omega_0}  \d^4 \P(J^{-1} A) \left[ \alpha ^{-1/2}\nabla ^3 \eta \, \alpha ^{-1/2}\nabla ^3\eta\, \nabla v+ \left(\alpha ^{-1/2}\nabla ^3 \eta \, \nabla ^2\eta + \nabla ^2\,\eta \, \nabla ^2\eta \, \nabla ^2\eta\right)\alpha ^2 \nabla ^2 v  \right] \, dx ds \nonumber\\
&\ - \Ee(0) \,. \nonumber
\end{align} 

Next, we estimate the integral $ \mathcal{I} ^2_3$. 
We use Lemma \ref{lemma_atan} to write
$x_k a^k_{( \cdot )},_{r_1 r_2}  \sim \bp \eta,_{r_1 r_2} \, \bp \eta + \bp \eta,_{r_1 } \, \bp \eta,_{r_2} $.
It then follows that the highest-order term in $\int_0^t \mathcal{I}^2 _3(s) ds $ can be written as
\begin{equation}\label{ss81}
\int_0^t \alpha^{-2.5} \int_{\Omega_0} \d^3 \  \alpha ^{-1/2} \nabla ^2 \bp \eta \  \bp \eta \, J^{-2}   \alpha ^2 \nabla ^2 v \, dxds \,.
\end{equation} 
This  integral is estimated using the fact that $\alpha ^{-1/2} \d^{3/2}\nabla ^2 \bp$ is in $L^2(\Omega_0)$ by  \eqref{estd1bpa}, the fact that 
$\alpha ^2 \nabla ^2 v$ in $L^2(\Omega_0)$ appears in \eqref{ss80}, and again that $ \alpha ^{-2.5}$ is integrable in time.
We use Lemma \ref{lemma_tan} to estimate $ \mathcal{I} ^2_4$; the highest-order term in $\int_0^t \mathcal{I}^2 _4(s) ds $ can also be written as in
\eqref{ss81} and hence estimated in the same way.  As noted above, the integrals $ \mathcal{I} _j$, $j=5,6,7$ are lower-order and are estimated using \eqref{estd1bpa}.  Hence, after time integration and thanks to Lemma \ref{lemma_curl}, we obtain that
\begin{align} 
& \| \alpha ^2 \d ^{3/2} \nabla^2 \deta \|_0^2  + \| \alpha ^2 \d ^{3/2} \nabla^2 v\|_0^2 +     \| \alpha^{-1/2} \d^{3/2} \nabla^2 \eta\|_0^2 
+
\| \alpha ^{-1/2} \d^{2}   \nabla ^3\eta\|_0^2  + \| \alpha ^{-1/2} \d^{2}  \operatorname{div}  _\eta \ \nabla^2 \eta\|_0^2   \nonumber
\\
& \qquad \lesssim \Ee(0) + \vartheta \sup_{s \in [0,t]}\Ee(s) +   \sup_{s \in [0,t]} \P( \Ee(s))
 \,.
 \label{estd2}
\end{align} 
We next study the tangential derivative $\bp^\a$ problem for $ a=1,..., 6$.   We assume that the $\bp^{\a-1}$ problem has been estimated, and
let $\bp^{\a}\p_{r_1r_2}$ act on \eqref{momentum3d} and test against $\bp^{\a}\p_{r_1r_2} v^i$.    In the same way that we obtained \eqref{estd1bpa}, we find
that 
\begin{align} 
 & \| \alpha ^2 \d ^{3/2} \nabla^2\bp^\a \deta \|_0^2  + \| \alpha ^2 \d ^{3/2} \nabla^2\bp^\a v\|_0^2 +     \| \alpha^{-1/2} \d^{3/2}\nabla^2\bp^\a \eta\|_0^2 
+
\| \alpha ^{-1/2} \d^{2}   \nabla ^3\bp^\a\eta\|_0^2 
  \nonumber
\\
& \qquad \lesssim \Ee(0) + \vartheta \sup_{s \in [0,t]}\Ee(s) +   \sup_{s \in [0,t]} \P( \Ee(s)) \ \ \text{for} \ \a=0,1,2,3,4,5,6
 \,.
 \label{estd2bpa}
\end{align} 

\vspace{.2in}
\noindent
{\it Step 5. Third-order through six-order  estimates.}  Repeating the procedure detailed above, at the third-order level, we find that
\begin{align} 
 & \|  \d ^{2} \nabla^3\bp^\a \deta \|_0^2  + \| \alpha ^2 \d ^{2} \nabla^3\bp^\a v\|_0^2 +     \| \alpha^{-1/2} \d^{2}\nabla^3\bp^\a \eta\|_0^2 
+
\| \alpha ^{-1/2} \d^{5/2}   \nabla ^4\bp^\a\eta\|_0^2 
  \nonumber
\\
& \qquad \lesssim \Ee(0) + \vartheta \sup_{s \in [0,t]}\Ee(s) +   \sup_{s \in [0,t]} \P( \Ee(s)) \ \ \text{for} \ \a=0,1,2,3,4,5
 \,.
 \label{estd3bpa}
\end{align} 
At the fourth-order level, we obtain that
\begin{align} 
 & \|  \d ^{5/2} \nabla^4\bp^\a \deta \|_0^2  + \| \alpha ^2 \d ^{5/2} \nabla^4\bp^\a v\|_0^2 +     \| \alpha^{-1/2} \d^{5/2}\nabla^4\bp^\a \eta\|_0^2 
+
\| \alpha ^{-1/2} \d^{3}   \nabla ^5\bp^\a\eta\|_0^2  
  \nonumber
\\
& \qquad \lesssim \Ee(0) + \vartheta \sup_{s \in [0,t]}\Ee(s) +   \sup_{s \in [0,t]} \P( \Ee(s)) \ \ \text{for} \ \a=0,1,2,3,4
 \,.
 \label{estd4bpa}
\end{align} 
At the fifth-order level, we find that
\begin{align} 
 & \|  \d ^{3} \nabla^5\bp^\a \deta \|_0^2  + \| \alpha ^2 \d ^{3} \nabla^5\bp^\a v\|_0^2 +     \| \alpha^{-1/2} \d^{3}\nabla^5\bp^\a \eta\|_0^2 
+
\| \alpha ^{-1/2} \d^{7/2}   \nabla ^6\bp^\a\eta\|_0^2 
  \nonumber
\\
& \qquad \lesssim \Ee(0) + \vartheta \sup_{s \in [0,t]}\Ee(s) +   \sup_{s \in [0,t]} \P( \Ee(s)) \ \ \text{for} \ \a=0,1,2,3
 \,,
 \label{estd5bpa}
\end{align} 
and at the sixth-order level,
\begin{align} 
 & \|   \d ^{7/2} \nabla^6\bp^\a \deta \|_0^2  + \| \alpha ^2 \d ^{7/2} \nabla^6\bp^\a v\|_0^2 +     \| \alpha^{-1/2} \d^{7/2}\nabla^6\bp^\a \eta\|_0^2 
+
\| \alpha ^{-1/2} \d^{4}   \nabla ^7\bp^\a \eta\|_0^2 
  \nonumber
\\
& \qquad \lesssim \Ee(0) + \vartheta \sup_{s \in [0,t]}\Ee(s) +   \sup_{s \in [0,t]} \P( \Ee(s)) \ \ \text{for} \ \a=0,1,2
 \,,
 \label{estd6bpa}
\end{align} 
while at the seventh-order level,
\begin{align} 
 & \|   \d ^{4} \nabla^7\bp^\a \deta \|_0^2  + \| \alpha ^2 \d ^{4} \nabla^7\bp^\a v\|_0^2 +     \| \alpha^{-1/2} \d^{4}\nabla^7\bp^\a \eta\|_0^2 
+
\| \alpha ^{-1/2} \d^{4.5}   \nabla ^8\bp^\a \eta\|_0^2 
  \nonumber
\\
& \qquad \lesssim \Ee(0) + \vartheta \sup_{s \in [0,t]}\Ee(s) +   \sup_{s \in [0,t]} \P( \Ee(s)) \ \ \text{for} \ \a=0,1
 \,.
 \label{estd7bpa}
\end{align} 

\vspace{.2in}
\noindent
{\it Step 6. Highest order eighth-derivative estimates.}  For the eighth-order energy estimate, we let $\nabla ^8:=\p_{r_1\ddd r_8}$ act on 
(\ref{ceuler03D}b) and obtain the equation
\begin{align} 
\alpha^4\p_t v^i,_{r_1\ddd r_8} + 2\alpha ^4 \dot\alpha v^i ,_{r_1\ddd r_8}+\alpha ^{-1}  \eta^i ,_{r_1\ddd r_8}
+\alpha ^{-1}  \left[a^k_i \d ^{-1} \left(\d^2 J^{-2} \right),_k\right],_{r_1\ddd r_8} =0  \,.
\label{d7momentum3d}
\end{align} 
Now, using the product rule and Lemma \ref{cor1},
\begin{align} 
& \left[ \aki\d ^{-1} \left( \d^2 J^{-2}\right),_k\right],_{r_1 \ddd r_8}  
= \aki,_{r_1 \ddd r_8}\left[ \d ^{-1} \left( \d^2 J^{-2}\right),_k\right] 
+\aki \left[ \d ^{-1} \left( \d^2 J^{-2}\right),_k\right],_{r_1 \ddd r_8} + \operatorname{l.o.t.}   \nonumber \\
&
= \aki,_{r_1 \ddd r_8} \d ^{-9} \left( \d^{10} J^{-2}\right),_k 
+
\aki \d^{-9} \left( \d^{10} J^{-2},_{r_1 \ddd r_8}\right),_k + 4  x_k \aki,_{r_1 \ddd r_8} J^{-2} \nonumber \\
&\qquad
- \left[\d,_k   J^{-2},_{r_1 \ddd r_8} - \d,_{r_8}   J^{-2},_{r_1 \ddd r_7 k}\right] \aki 
-\p_{r_8} \left[\d,_k   J^{-2},_{r_1 \ddd r_7} - \d,_{r_7}   J^{-2},_{r_1 \ddd r_6 k}\right] \aki 
\nonumber \\
&\qquad
- \p_{r_7r_8} \left[ \d,_k   J^{-2},_{r_1 \ddd r_6} - \d,_{r_6}   J^{-2},_{r_1 \ddd r_5 k}  \right]\aki   
- \p_{r_6 r_7r_8} \left[ \d,_k   J^{-2},_{r_1\ddd r_5} - \d,_{r_5}   J^{-2},_{r_1\ddd r_4k}  \right]\aki  
\nonumber \\
&\qquad
- \p_{r_5 \ddd r_8} \left[ \d,_k   J^{-2},_{r_1\ddd r_4} - \d,_{r_4}   J^{-2},_{r_1 r_2 r_3k}  \right] \aki  
-\p_{r_4 \ddd r_8} \left[ \d,_k   J^{-2},_{r_1 r_2 r_3} - \d,_{r_3}   J^{-2},_{r_1 r_2 k}  \right]\aki 
\nonumber \\
&\qquad
- \p_{r_3 \ddd  r_8} \left[ \d,_k   J^{-2},_{r_1 r_2 } - \d,_{r_2}   J^{-2},_{r_1 k}  \right]\aki  
-\p_{r_2 \ddd  r_8} \left[ \d,_k   J^{-2},_{r_1 } - \d,_{r_1}   J^{-2},_{ k}  \right] \aki  + \operatorname{l.o.t.} \,,  \label{ss91}
\end{align} 
where $ \operatorname{l.o.t.}$ denotes lower-order terms which contain at most seven partial derivatives of $\eta$, and can thus be estimated using
\eqref{estd6bpa}.

Testing \eqref{d7momentum3d} against $\d^9 v^i,_{r_1 \ddd r_8}$, we have that
\begin{align} 
&{\frac{1}{2}}  \frac{d}{dt} \left( \| \alpha ^2 \d^{4.5}\nabla^8  v\|_0^2 +     \| \alpha^{-1/2} \d^{4.5} \nabla^8 \deta\|_0^2 
\right) 
+ {\frac{\dot\alpha}{2}} \|\alpha^{-1} \d^{4.5} \nabla^8 \deta\|_0^2  + \sum_{j=1}^4 \mathcal{I}^8 _j  + \mathfrak{R}=0 \,, \nonumber
\end{align} 
where $\mathfrak{R}$ denotes integrals containing the lower-order terms $ \operatorname{l.o.t.}$ described above, and 
\begin{align*} 
\mathcal{I}^8 _1 & =  \alpha^{-1} \int_{\Omega_0} \left[\d^{10} (J^{-2}),_{r_1 \ddd r_8}\right],_k v^i,_{ r_1 \ddd r_8  } \aki\, dx \,, \\
\mathcal{I}^8 _2 & =   \alpha^{-1} \int_{\Omega_0} \left[ \d^{10} J^{-2} \aki,_{r_1 \ddd r_8} \right],_k v^i,_{ r_1 \ddd r_8 } \, dx \,, \\
\mathcal{I}^8 _3 & = 4\alpha^{-1}\int_{\Omega_0} \d^9\, x_k \    \aki,_{r_1 \ddd r_8}  J^{-2}   v^i,_{r_1 \ddd r_8 } \, dx \,,  \\
\mathcal{I} ^8_4 & = - \alpha^{-1}\int_{\Omega_0} \d^9 \left\{  
- \left[\d,_k   J^{-2},_{r_1 \ddd r_8} - \d,_{r_8}   J^{-2},_{r_1 \ddd r_7 k}\right] \aki  
- \left[\d,_k   J^{-2},_{r_1 \ddd r_7} - \d,_{r_7}   J^{-2},_{r_1 \ddd r_6 k}\right],_{r_8} \aki  \right. \nonumber \\
&\qquad
- \left[ \d,_k   J^{-2},_{r_1 \ddd r_6} - \d,_{r_6}   J^{-2},_{r_1 \ddd r_5 k}  \right],_{r_7r_8}\aki 
-  \left[ \d,_k   J^{-2},_{r_1\ddd r_5} - \d,_{r_5}   J^{-2},_{r_1\ddd r_4k}  \right],_{r_6r_7r_8}\aki  \nonumber \\
&\qquad
- \left[ \d,_k   J^{-2},_{r_1 \ddd r_4} - \d,_{r_4}   J^{-2},_{r_1 r_2 r_3k}  \right],_{r_5 \ddd r_8} \aki 
-  \left[ \d,_k   J^{-2},_{r_1 r_2 r_3} - \d,_{r_3}   J^{-2},_{r_1 r_2 k}  \right],_{r_4\ddd r_8}\aki   \nonumber \\
&\qquad   \left.
- \left[ \d,_k   J^{-2},_{r_1 r_2 } - \d,_{r_2}   J^{-2},_{r_1 k}  \right],_{r_3\ddd r_8} \aki  
-  \left[ \d,_k   J^{-2},_{r_1 } - \d,_{r_1}   J^{-2},_{ k}  \right],_{r_2\ddd r_8} \aki
\right\}  \aki  v^i,_{r_1 \ddd r_8} \, dx \,.
\end{align*} 

Again, with \eqref{a1},
$$
\nabla ^8 \aki =\nabla^7 \left( \nabla \eta^j,_{s} J[A^s_j A^k_i - A^k_j A^s_i]  \right) 
= \nabla^8 \eta^j,_{s} J[A^s_j A^k_i - A^k_j A^s_i]  + \mathfrak{r}\,,
$$
where $\mathfrak{r}=  \sum_{b=0}^7 c_b \nabla ^b \eta^j,_{s}  \nabla ^{7-b} \left(J[A^s_j A^k_i - A^k_j A^s_i] \right)$, with  each $c_b$,  a constant.
  Since the highest-order derivative in 
$\mathfrak{r}$ is given by $\nabla^8 \eta$ and this term has been estimated by \eqref{estd7bpa}, we need only consider the integral
\begin{align*} 
\mathcal{I} ^{8}_{1,\operatorname{high}} 
&=
- \alpha ^{-1}  \int_{\Omega_0} \d^{10} J^{-2} \ \nabla^8 \eta^j,_{s} J[A^s_j A^k_i - A^k_j A^s_i]  \ \nabla^8 v^i,_k \, dx \,,
\end{align*} 
and by  Lemma \ref{lemma_aenergy},
\begin{align*}
\mathcal{I} ^{8}_{1,\operatorname{high}} 
&=
\frac{\alpha^{-1} }{2} \int_{\Omega_0} \d^{10} J ^{-1} \frac{d}{dt} \left( | \nabla _\eta   (\nabla^8 \eta)|^2  - |\operatorname{div}  _\eta (\nabla^8 \eta)|^2
- 2 |\operatorname{curl}  _\eta (\nabla^8 \eta)|^2
\right)\, dx \,.
\end{align*} 
Then, for the integral $ \mathcal{I} ^{8}_2 $, we expand $ \nabla ^8 J^{-2}$ and by the same argument used for the integral $ \mathcal{I} ^{8}_1 $,
we need only estimate the highest-order term
\begin{align*}
\mathcal{I} ^{8}_{2,\operatorname{high}} 
&= 2 \alpha ^{-1}  \int_{\Omega_0} \d^{10} J^{-3} \nabla ^8 \eta^r,_s a^s_r \ \nabla^8  v^i,_k  \aki \, dx
= \alpha^{-1} \int_{\Omega_0} \d^{10} J^{-1} \frac{d}{dt}  | \operatorname{div} _\eta ( \nabla ^8 \eta)|^2\, dx
 \,.
\end{align*} 
Integrating in time from $0$ to $t$, we find that
\begin{align} 
&\int_0^t \left( \mathcal{I}^8 _1 (s)+ \mathcal{I} ^8_2(s)\right) ds 
 =  \frac{\alpha^{-1}}{2}  \int_{\Omega_0} \d^{10} J^{-1}
\left(  |  \nabla_\eta (\nabla^8  \eta) |^2  +  | \operatorname{div}_\eta (\nabla ^8 \eta)  |^2 -  2| \operatorname{curl}_\eta (\nabla^8\eta)   |^2 \right) \, dx\nonumber  \\
&\
+ \int_0^t  \frac{\alpha^{-2}\dot\alpha}{2}  \int_{\Omega_0} \d^{10} J^{-1}
\left(  |  \nabla_\eta (\nabla^8  \eta) |^2 +  | \operatorname{div}_\eta (\nabla^8 \eta)   |^2  -  2| \operatorname{curl}_\eta( \nabla^8 \eta)   |^2 
\right) \, dxds  \nonumber \\
&\
+ \int_0^t  \alpha^{-2}  \int_{\Omega_0}  \d^{10} \P(J^{-1} A) \alpha ^{-1/2}\nabla ^9 \eta \, \alpha ^{-1/2}\nabla ^9\eta\, \alpha ^2 \nabla v \, dx ds 
+ \int_0^t  \mathfrak{R} (s) ds
 - \Ee(0) \,, \nonumber
\end{align} 
where $ \mathfrak{R}$ consists of space integrals with lower-order terms having at most eight partial derivatives of $\eta$, and hence easily estimated
using \eqref{estd7bpa}.  

To estimate $ \mathcal{I} ^8_3$, we write 
$$ \nabla ^8 a \sim \nabla ^9 \eta \times \nabla \eta + \sum_{b=0}^7 c_b  \nabla ^{b+1} \eta\, \nabla ^{8-b} \eta\,,$$
for constants $c_b$.  Clearly $ \sum_{b=0}^7 c_b  \nabla ^{b+1} \eta\, \nabla ^{8-b} \eta$ consists of lower-order terms that can be estimated using 
\eqref{estd6bpa}.  Hence, we consider the the highest-order term in $ \mathcal{I} ^8_3$ and use Lemma \ref{lemma_atan} to write
$$
 \mathcal{I} ^8_{3, \operatorname{high} } = -
4\alpha^{-1} \int_{\Omega_0} \d^9\, \bp \nabla ^8 \eta \, \bp \eta\,   J^{-2}\,    \nabla^8 v \, dx \,.
$$
Notice that $\alpha^{-1/2}\d^{4.5} \bp \nabla ^8 \eta$ is controlled in $L^2(\Omega_0)$ by  \eqref{estd7bpa}, and hence $\int_0^t \mathcal{I} ^8_3(s)ds$
 is easily estimated by virtue of the control of $ \alpha ^2 \d^{4.5}  \nabla^8 v$ in $L^2(\Omega_0)$ and the time-integrability of $ \alpha ^{-2}$.   By
 Lemma \ref{lemma_tan}, the highest-order terms in the integral $ \mathcal{I} ^8_4$ can be written as
 $$
 \mathcal{I} ^8_{4, \operatorname{high} } = 
\alpha ^{-1}  \int_{\Omega_0} \d^9\,\P( J ^{-1} A)  \bp \nabla ^8 \eta \,    \nabla^8 v \, dx \,,
$$
and hence this can be estimated in the same way as $ \mathcal{I} ^8_3$.   With the use of Lemma \ref{lemma_curl}, we thus find our highest-order estimate to be
\begin{align} 
 & \|  \d ^{4.5} \nabla^8 \deta \|_0^2  + \| \alpha ^2 \d ^{4.5} \nabla^8 v\|_0^2 +     \| \alpha^{-1/2} \d^{4.5}\nabla^8 \eta\|_0^2 
+
\| \alpha ^{-1/2} \d^{5}   \nabla ^9 \eta\|_0^2 
  \nonumber
\\
& \qquad \lesssim \Ee(0) + \vartheta \sup_{s \in [0,t]}\Ee(s) +   \sup_{s \in [0,t]} \P( \Ee(s))  
 \,.
 \label{estd8}
\end{align}

 \vspace{.05in}
\noindent
{\it Step 7. Building the higher-order norm $\Ee(t)$.}  Together, the estimates \eqref{estd0}, \eqref{estd0bpa}, \eqref{estd1bpa}, \eqref{estd2bpa}, \eqref{estd3bpa},
\eqref{estd4bpa}, \eqref{estd5bpa},  \eqref{estd6bpa},  \eqref{estd7bpa}, and \eqref{estd8} show that
\begin{align} 
&
\sup_{s \in [0,t]}  
\sum_{b=0}^8\sum_{\a=0}^{8-b} \left( \|\d^{{\frac{b+1}{2}} }  \nabla ^b \bp^\a \deta( \cdot ,t)\|_0^2  
+  \|\alpha ^2 \d^{{\frac{b+1}{2}} }  \nabla ^b \bp^a v( \cdot ,t)\|_0^2 \right) 
 +\sum_{b=1}^9  \| \alpha^{-1/2} \d^{{\frac{b+1}{2}} } \nabla ^b \bp^{9-b} \deta( \cdot ,t)\|_0^2 \nonumber  \\
 &\qquad\qquad\qquad
 \lesssim \Ee(0) + \vartheta \sup_{s \in [0,t]}\Ee(s) +   \sup_{s \in [0,t]} \P( \Ee(s))  \,. \label{main}
\end{align} 
This bound then implies that
\begin{equation}\label{main_sub1}
\sup_{s \in [0,t]}  
\sum_{b=1}^5 \sum_{\a=0}^{10-2b}    \left\| \alpha^{-1/2} \d^b \nabla^b \bp^\a \deta(\cdot ,t)  \right\|_{4-\a/2}^2 
\lesssim \Ee(0) + \vartheta \sup_{s \in [0,t]}\Ee(s) +   \sup_{s \in [0,t]} \P( \Ee(s)) 
\,, \end{equation} 
which is proven as follows.   We begin with $b=1$ in \eqref{main} which bounds $ \ah \d  \nabla \bp^8 \deta \in L^2$ as well as 
 $ \ah \d^{1/2}   \bp^8 \deta \in L^2$.   Thus, from \eqref{w-embed}, we may conclude that 
 $$
 \| \ah \bp^8 \deta \|_0 + \| \ah \d \nabla \bp^8 \deta\|_0 \lesssim \Ee(0) + \vartheta \sup_{s \in [0,t]}\Ee(s) +   \sup_{s \in [0,t]} \P( \Ee(s))  \,.
 $$
Next, setting $b=2$ in \eqref{main}, we may conclude that  $\ah \d^{1/2} \bp^7\deta$, $\ah \d \nabla  \bp^7\deta$, and 
$\ah \d^{3/2} \nabla^2  \bp^7\deta$ are bounded in $L^2$, and hence that
\begin{align} 
\int_{\Omega_0} \d^3 \left( |\bp^7\deta|^2+
|\nabla \bp^7 \deta|^2 + |\nabla^2 \bp^7 \deta|^2\right)dx
 \lesssim \Ee(0) + \vartheta \sup_{s \in [0,t]}\Ee(s) +   \sup_{s \in [0,t]} \P( \Ee(s))  \,. \label{id1}
\end{align}  
Thu, from \eqref{w-embed}, we have that 
\begin{equation}
\int_{\Omega_0} \d \left( |\bp^7\deta|^2+ |\nabla \bp^7 \deta|^2\right)dx
 \lesssim \Ee(0) + \vartheta \sup_{s \in [0,t]}\Ee(s) +   \sup_{s \in [0,t]} \P( \Ee(s))  \,. \label{id2}
\end{equation} 
  From\footnote{Note well that the weighted embedding theorems are codimension-$1$ results; only the behavior in the direction normal to the boundary
  of the domain determines the inequality.} Theorem 5.3 in \cite{KuPe2003} and the definition of the fractional $H^s$ norm in Section \ref{sec:frac}, we 
have that
$$
 \| \ah \bp^7 \deta \|_{1/2}  \lesssim \Ee(0) + \vartheta \sup_{s \in [0,t]}\Ee(s) +   \sup_{s \in [0,t]} \P( \Ee(s))  \,.
 $$
Now, since $ \nabla (d\nabla \bp^7 \deta) = \d \nabla ^2 \bp^7 \deta + \nabla \d \, \nabla \bp^7 \deta$, we see that \eqref{id1} and \eqref{id2} together show that
\begin{equation}
\int_{\Omega_0} \d \left( |\d \nabla \bp^7\deta|^2+ |\nabla (\d\nabla  \bp^7 \deta)|^2\right)dx
 \lesssim \Ee(0) + \vartheta \sup_{s \in [0,t]}\Ee(s) +   \sup_{s \in [0,t]} \P( \Ee(s))  \,. \nonumber
\end{equation} 
Then, applying Theorem 5.3 in \cite{KuPe2003} once again, we find that
$$
 \| \ah \bp^7 \deta \|_{1/2} + \| \ah \d \nabla \bp^7 \deta \|_{1/2} \lesssim \Ee(0) + \vartheta \sup_{s \in [0,t]}\Ee(s) +   \sup_{s \in [0,t]} \P( \Ee(s))  \,.
 $$
The remaining summands in \eqref{main} are proved inductively using the identical procedure.

Using the weighted embeddings \eqref{w-embed} and the procedure just described, we also find that
\begin{equation}\label{main2}
\sum_{b=1}^4 \sum_{a=0}^{9-2b} \left( \| \alpha ^2 \d^b \nabla ^b \bp^\a v\|^2_{3.5-\a/2} +  \|  \d^b \nabla ^b \bp^\a \deta\|^2_{3.5-\a/2}\right)
 \lesssim \Ee(0) + \vartheta \sup_{s \in [0,t]}\Ee(s) +   \sup_{s \in [0,t]} \P( \Ee(s)) \,.
\end{equation} 
It follows from the higher-order Hardy-type inequality in Lemma \ref{hardy} that
\begin{align*} 
&\sum_{\a=0}^7  \left(\| \alpha^2 \bp^\a v( \cdot ,t) \|_{3.5-\a/2}^2 +  \|  \bp^\a \deta(\cdot ,t)  \|_{3.5-\a/2}^2 \right)
+\sum_{\a=0}^8   \| \alpha^{-1/2} \bp^\a \deta(\cdot ,t)  \|_{4-\a/2}^2  \\
&\qquad\qquad\qquad
 \lesssim \Ee(0) + \vartheta \sup_{s \in [0,t]}\Ee(s) +   \sup_{s \in [0,t]} \P( \Ee(s)) \,.
\end{align*}

Finally, from the identity \eqref{curl_need},
\begin{align*} 
\alpha ^2  \d^{\frac{b+2}{2}}  \nabla^b \bp^{8-b}\operatorname{curl} _\eta v ( \cdot , t) 
&= \d^{\frac{b+2}{2}}  \nabla^b \bp^{8-b}\operatorname{curl} u_0 +  \varepsilon_{ \cdot kj}
\d^{\frac{b+2}{2}}  \nabla^b \bp^{8-b} \int_0^t \at \left(  \alpha ^2  v^k,_r A^r_l  \ \alpha ^2v^l,_m A^m_j \right) \, dt' .
\end{align*} 
The $L^2(\Omega_0)$-norm is estimated using integration-by-parts in time, and follows identically the argument for the integral $ \mathcal{J} _2^{0,8}$ in 
the proof of Lemma \ref{lemma_curl}, which shows that
$$
\|\alpha ^2  \d^{\frac{b+2}{2}}  \nabla^b \bp^{8-b}\operatorname{curl} _\eta v ( \cdot , t) \|_0^2 
 \lesssim \Ee(0) + \vartheta \sup_{s \in [0,t]}\Ee(s) +   \sup_{s \in [0,t]} \P( \Ee(s)) \,.
$$

 \vspace{.05in}
\noindent
{\it Step 8. Bound for $\Ee(t)$ and global existence.}   We have established that 
$$
\sup_{s \in [0,t]} \Ee(s)
 \lesssim \Ee(0) + \vartheta \sup_{s \in [0,t]}\Ee(s) +   \sup_{s \in [0,t]} \P( \Ee(s))  \text{ for } t \in [0,T]\,,
$$
where $T$ is independent of $ \kappa$, since $\Ee(0)$ is  independent of $\kappa$.   First, we choose $\vartheta \ll 1$ sufficiently small.  Then,
by assumption $\Ee(0) \le \epsilon$, and we 
choose $ \epsilon \ll \vartheta $ sufficiently small so that  $ \Ee(t) \lesssim \epsilon$ for all $t \in [0,T]$.  Then, by the standard continuation argument,
\begin{equation}\nonumber
 \Ee(t) \lesssim \epsilon \ \ \text{ for all } t \ge 0\,.
\end{equation} 
Hence, by the  Sobolev embedding theorem, $\| \nabla \deta\|_{L^ \infty } \lesssim \epsilon $ so that  the inequalities \eqref{eta_bound2d}--\eqref{Jbound}
are significantly improved.    As $\Ee(t)$ is bounded by for $ t \ge 0$, we have that $\eta \in L^ \infty (0,\infty ; H^4(\Omega_0))$ and by applying Theorem A.2 in
\cite{CoSh2014} to our functional framework we obtain that $\eta \in C^0 (0,\infty ; H^4(\Omega_0))$.  
Since $\Gamma(t) = \eta(\Gamma_0,t)$ we then have that $\Gamma(t)$ is of Sobolev class $H^{3.5}$ for all time $t\ge 0$.

 \vspace{.05in}
\noindent
{\it Step 9. The limit as $\kappa \to 0$.} In order to produce our energy bounds, we have used the 
smooth sequence $(\eta_\kappa, v_\kappa)$ introduced in Step 1.  
Since we have established that
$\|  v_\kappa( \cdot ,t) \|_{3.5}^2 +  \|  \eta_\kappa(\cdot ,t)\|_4^2 \lesssim \epsilon $ with
$ \epsilon $ independent of $\kappa$, we have compactness, and it is a standard argument to show that 
$\eta_k \to \eta$ in  $C^2(\overline\Omega_0 \times [0,T])$ for any $T < \infty $, where $\eta$ is a solution of \eqref{ceuler03D}, and
$\Ee(t) \lesssim \epsilon $.    By Theorem 1 in \cite{CoSh2012},
if $\Ee^9(t)$ is bounded the solution is unique.

 \end{proof}

\subsection{Global existence and stability of general affine solutions for $\gamma=2$} 
We now explain the modifications required to analyze the stability of general affine solutions to \eqref{agen}.  
We shall make use of the following corollary to  Lemma \ref{lemma_affine}.
\begin{corollary}\label{cor_main}  Suppose that $\A(t)$ is a (global) solution of the system \eqref{agen} in 3-d which satisfies the
estimates of Lemma \ref{lemma_affine}.
 Let  $\bar t:= 1+t$ and define
$$
\beta=\left\{\begin{array}{ll} \bar t^{-3(\gamma-1)+1} & 1< \gamma < \tfrac 4 3 \\ \log(1+ \bar t) & \gamma= \tfrac 4 3 \\ 1 & \tfrac 4 3 < \gamma  \end{array}\right. \,,
$$
so that $\bar t ^{-2} \beta \in L^1[0, \infty )$.   We have the following asymptotic behavior:
\begin{subequations}
\label{cs1}
\begin{alignat}{2}
\A(t) & = \bar t \A_1 +  \bar \B_1(t)\,, \qquad && \frac{d^k \bar \B_1}{dt^k} (t) = O(\bar t^{-k}\beta(t)) \text{ as } t \to \infty  \,, \\
\operatorname{cof} \A(t) & = \bar t ^{2}  \operatorname{cof} \A_1 + \bar \B_2(t)\,, \qquad && \frac{d^k \bar \B_2}{dt^k} (t) = O(\bar t^{1-k}\beta(t)) \text{ as } t \to \infty  \,, \\
\det\A(t) & = \bar t ^{3}  \det \A_1 + \bar \B_3(t)\,, \qquad && \frac{d^k \bar \B_3}{dt^k} (t) = O(\bar t^{2-k}\beta(t)) \text{ as } t \to \infty  \,, \\
\A ^{-1} (t) & = \bar t ^{-1} \A_1 ^{-1}  + \bar \B_4(t)\,, \qquad && \frac{d^k \bar \B_4}{dt^k} (t) = O(\bar t^{-2-k}\beta(t)) \text{ as } t \to \infty  \,, \\
\dot\A(t) \A ^{-1} (t) & = \bar t ^{-1}  \operatorname{Id}  + \bar \B_5(t)\,, \qquad && \frac{d^k \bar \B_5}{dt^k} (t) = O(\bar t^{-2-k}\beta(t)) \text{ as } t \to \infty  \,, \\
\bar t^2 (\det\A(t)) ^{-1} & ={\frac{1}{\bar t \det \A_1}}   + \bar \B_6(t)\,, \qquad && \frac{d^k \bar \B_6}{dt^k} (t) = O(\bar t^{-2-k}\beta(t)) \text{ as } t \to \infty  \,,
\end{alignat} 
\end{subequations}
for $k=0, 1, 2$.
\end{corollary} 
By Remark \ref{rem1}, we may assume that $ \A_1$ is a diagonal matrix given by
\begin{equation}\label{cs2}
\A_1 = \operatorname{Diag} [\ss_1, \ss_2, \ss_3] \  \text{ and }\  \Lambda = \A_1^{-2} \,,
\end{equation} 
where $\ss_i>0$ for $i=1,2,3$.  
We set
\begin{align*} 
\ee  & =  \A_1^2 \eta\,, \ \vv = \A_1^2 v\,, \ \ \text{ and } \ \dee(x,t) = \ee (x,t) - \A_1^2 x
\,, \\
\AA  & =  [ \nabla \ee] ^{-1} \,, \ \JJ = \det \nabla \ee \,.
\end{align*} 
Notice that 
\begin{equation}\label{cs4}
\AA = A\, \Lambda\,, \  \JJ = J\, \det{\A_1^2}\,,  \ \text{ and } \ \aa = \JJ \, \AA \,.
\end{equation}

The norm $\Ee^K(t)$ is modified as follows:
\begin{align} 
\Ee^K(t) &=\sum_{b=0}^K\sum_{\a=0}^{K-b} \left( \|\d^{{\frac{b+1}{2}} }  \nabla ^b \bp^\a \deta( \cdot ,t)\|_0^2  
+  \| \bar t  \d^{{\frac{b+1}{2}} }  \nabla ^b \bp^a  \A v( \cdot ,t)\|_0^2 \right) 
 +\sum_{b=1}^{K+1}  \| \sqrt{{\frac{\bar t^2}{\det \A}} } \d^{{\frac{b+1}{2}} } \nabla ^b \bp^{K+1-b} \deta( \cdot ,t)\|_0^2
\,.\nonumber
\end{align} 
We continue to denote $ \Ee^8(t)$ by $\Ee(t)$.

\begin{theorem}[Perturbations of general affine motion for $\gamma=2$]\label{thm_g2_gen}For $\gamma=2$ and  $ \epsilon >0$ taken sufficiently small, if the initial data satisfies
 $\Ee(0) <  \epsilon $, then there exists a global  solution $\eta(x,t)$ of \eqref{ceuler03D}  such that $\Ee(t) \le \epsilon $ for all $t \ge 0$.    In particular,
 the flow map $\eta \in C^0([0, \infty ), H^4(\Omega_0))$ and the moving vacuum boundary $\Gamma(t)$ is of class $H^{3.5}$ for all $t\ge 0$.
 The composition
 $\xi(x,t)= \alpha (t) \Lambda  \eta(x,t)$  defines a  global-in-time solution of  the Euler equations \eqref{ceuler3d}.   
  If the initial data satisfies $\Ee^9(t) \le C$, then the solution $ \xi (x,t)$ is unique.
\end{theorem}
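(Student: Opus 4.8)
The plan is to reduce the result to Theorem~\ref{mainthm} by the fixed linear change of unknowns $\ee=\Lambda^2\eta$, $\vv=\Lambda^2 v=\p_t\ee$, where $\mm=\Lambda^2=\operatorname{diag}(\ss_1^2,\ss_2^2,\ss_3^2)$ and $\dee=\ee-x$. Since $\mm^i_j\p_t v^j=\p_t\vv^i$ and $\mm^i_j v^j=\vv^i$, equation \eqref{ceuler_gen} becomes $\alpha^5\p_t\vv^i+2\alpha^4\dot\alpha\,\vv^i+\eta^i+\d^{-1}a^k_i\big(\d^2J^{-2}\big),_k=0$, in which $\eta$, $J=\det\nabla\eta$ and $a=J(\nabla\eta)^{-1}$ are exactly the objects of Theorem~\ref{mainthm} (equivalently $\nabla\eta=\Lambda^{-2}\nabla\ee$ and $J=(\ss_1\ss_2\ss_3)^{-2}\det\nabla\ee$, so $J$ and $a$ are comparable to the cofactor quantities built from $\ee$), while the distance function \eqref{dist3d_gen} differs from \eqref{dist3d} only by the fixed factor $\ss_1\ss_2\ss_3$. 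I would first record that the algebraic identities $\d,_r=-\tfrac{\ss_1\ss_2\ss_3}{2}x_r$ and $\d,_{r_1r_2}=-\tfrac{\ss_1\ss_2\ss_3}{2}\delta_{r_1r_2}$ (the analogue of \eqref{grad_d}), the Piola identity \eqref{piola}, and Lemmas~\ref{lemma_aenergy}, \ref{lemma_atan}, \ref{cor1}, \ref{lemma_tan} hold verbatim with this constant carried through, so that the coercive zeroth-order terms produced by Lemma~\ref{cor1} keep the correct sign.

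Second, I would rerun Steps~0--7 of the proof of Theorem~\ref{mainthm} essentially unchanged: for $b=0,\dots,8$ and $\a=0,\dots,8-b$, let $\bp^\a\nabla^b$ act on \eqref{ceuler_gen} (divided by $\alpha^{\gamma-1}=\alpha$, as in passing from \eqref{ceuler03D} to \eqref{momentum3d}) and test against $\d^{b+1}\bp^\a\nabla^b v^i$. Because $\mm$ is a fixed symmetric matrix it commutes with every derivative, so its only effect is that the kinetic energy becomes $\tfrac12\tfrac{d}{dt}\big(\alpha^4\!\int_{\Omega_0}\d^{b+1}\mm^i_j\,\bp^\a\nabla^b v^j\,\bp^\a\nabla^b v^i\,dx\big)$; since $\ss_i\ge c>0$ the quadratic form $\mm^i_j w^j w^i$ is equivalent to $|w|^2$, so this matches the term in $\Ee(t)$. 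The potential-energy term $\int\d^{b+1}\bp^\a\nabla^b\eta^i\,\bp^\a\nabla^b v^i=\tfrac12\tfrac{d}{dt}\|\d^{\frac{b+1}{2}}\bp^\a\nabla^b\deta\|_0^2$ and every term produced by $\d^{-1}a^k_i(\d^2J^{-2}),_k$ are identical to those in Theorem~\ref{mainthm}, so the curl-type and tangential-type error terms are handled by Lemmas~\ref{lemma_curl}, \ref{lemma_atan}, \ref{lemma_tan} and the remaining error terms by H\"older's inequality together with the weighted embedding \eqref{w-embed}, the Hardy inequality \eqref{Hardy}, and Sobolev embedding, just as before. This reproduces the analogue of \eqref{main}, and the weighted-embedding, fractional-norm, and Hardy arguments of Step~7 then furnish all remaining terms of the updated norm $\Ee(t)$.

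Third, the curl estimate is the one place where the norm must be altered, which is exactly why the last term of $\Ee^K(t)$ is stated with $\operatorname{curl}_\eta\vv$ rather than $\operatorname{curl}_\eta v$: the constant matrix $\mm$ does not commute with $\operatorname{curl}_\eta$. I would apply $\operatorname{curl}_\eta$ to \eqref{vort_gen}, rewritten as $\alpha^5\p_t\vv^i+2\alpha^4\dot\alpha\,\vv^i+\eta^i+2\nabla_\eta(\d J^{-1})=0$; using $\operatorname{curl}_\eta\eta=0$ and the annihilation of the Lagrangian gradient $\nabla_\eta(\d J^{-1})$ by $\operatorname{curl}_\eta$, this gives $\operatorname{curl}_\eta\p_t\vv+2\tfrac{\dot\alpha}{\alpha}\operatorname{curl}_\eta\vv=0$, and commuting $\p_t$ with $\operatorname{curl}_\eta$ yields $\p_t(\operatorname{curl}_\eta\vv)+2\tfrac{\dot\alpha}{\alpha}\operatorname{curl}_\eta\vv=\varepsilon_{\cdot kj}\,\vv^k,_r A^r_l\,v^l,_m A^m_j$, which has precisely the structure of the vorticity transport identity of Lemma~\ref{lemma_curl} with one factor now carrying $\vv$ in place of $v$. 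With the $\alpha^2$ integrating factor, $\alpha^2\operatorname{curl}_\eta\vv(\cdot,0)=\operatorname{curl}(\Lambda^2 u_0)$, and the fundamental theorem of calculus, the proof of Lemma~\ref{lemma_curl} carries over with $(\deta,v)$ replaced by $(\dee,\vv)$.

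Finally, the remaining steps are identical to Steps~8--9 of Theorem~\ref{mainthm}: take $\vartheta\ll1$, then $\epsilon\ll\vartheta$, run the standard continuation argument to obtain $\Ee(t)\lesssim\epsilon$ for all $t\ge0$, deduce $\|\nabla\deta\|_{L^\infty}\lesssim\epsilon$ so that \eqref{eta_bound2d}--\eqref{Jbound} are strictly improved and the physical vacuum condition persists, conclude that $\eta\in C^0([0,\infty);H^4(\Omega_0))$ and that $\Gamma(t)=\alpha(t)\Lambda\eta(\Gamma_0,t)$ is of class $H^{3.5}$, pass to the limit $\kappa\to0$, and obtain uniqueness under a bound on $\Ee^9$; then $\xi(x,t)=\alpha(t)\Lambda\eta(x,t)$ is a global solution of \eqref{ceuler3d}. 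I expect the main obstacle to be the curl reconciliation: the energy identity naturally produces the anti-symmetric energy $|\operatorname{curl}_\eta\bp^\a\nabla^b\eta|^2$ with the destabilizing sign, while the transport argument directly controls $\operatorname{curl}_\eta\vv$ (equivalently $\operatorname{curl}\dee=\operatorname{curl}(\Lambda^2\deta)$), and these weighted curls differ by the fixed diagonal factor $\Lambda^2$; closing the estimate requires carefully tracking these diagonal weights $\ss_i^2$ — together with the $\vartheta$-small full-gradient remainders — through every H\"older pairing in the long proof of Lemma~\ref{lemma_curl} and through each curl error term in Steps~2--6. Everything else is bookkeeping with the fixed constants $\ss_i$ and $\ss_1\ss_2\ss_3$.
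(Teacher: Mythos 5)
Your framework---the change of unknowns $\ee=\Lambda^2\eta$, $\vv=\Lambda^2 v$, the $\Lambda$-dependent distance \eqref{dist3d_gen}, the transport identity for $\operatorname{curl}_\eta\vv$ derived component-by-component from \eqref{vort_gen}, and the modified last term of $\Ee(t)$ carrying $\operatorname{curl}_\eta\vv$---is exactly the paper's, and your third paragraph on the vorticity is correct. But your second paragraph has a genuine gap that you acknowledge at the end and then incorrectly dismiss as bookkeeping. If you let $\bp^\a\nabla^b$ act on \eqref{ceuler_gen} divided by $\alpha$ and test against $\d^{b+1}\bp^\a\nabla^b v^i$, the highest-order pressure contribution produces, via the Piola identity, \eqref{a1}, and Lemma~\ref{lemma_aenergy}, the term $\tfrac12\tfrac{d}{dt}\big(|\nabla_\eta\bp^\a\nabla^b\deta|^2-|\operatorname{div}_\eta\bp^\a\nabla^b\deta|^2-2|\operatorname{curl}_\eta\bp^\a\nabla^b\deta|^2\big)$, whose anti-symmetric part involves $\operatorname{curl}_\eta\deta$; your transport argument controls only $\operatorname{curl}_\eta\dee$. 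These are \emph{not} ``related by the fixed diagonal factor $\Lambda^2$'': e.g.\ $(\operatorname{curl}_\eta\dee)_1=\ss_3^2\,A^r_2\,\deta^3,_r-\ss_2^2\,A^r_3\,\deta^2,_r$ is no constant multiple of $(\operatorname{curl}_\eta\deta)_1=A^r_2\,\deta^3,_r-A^r_3\,\deta^2,_r$ unless $\ss_2=\ss_3$. The anisotropic $\Lambda^2$ does not commute with either curl operator, so no amount of tracking fixed constants through the H\"older pairings converts the controlled $\operatorname{curl}\dee$ into the $\operatorname{curl}\deta$ that your energy identity demands.

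The paper's fix, which your outline omits, is structural rather than cosmetic: multiply \eqref{ceuler_gen} by $\alpha^{-1}\mmm$ with $\mmm=\mm^2$ (passing to \eqref{mom_gen2}) \emph{before} differentiating and testing against $\d^{b+1}\bp^\a\nabla^b v^i$. The extra diagonal factor $\mmm$, placed against the pressure nonlinearity, distributes into the highest-order integrals $I_1,I_2$ so that (up to $\vartheta$-small $A$-commutator errors controlled by \eqref{Abound}) the quadratic form reorganizes into $\bp^\a\nabla^b\dee^l,_s\,J[A^s_l A^k_i-A^k_l A^s_i]\,\bp^\a\nabla^b\vv^i,_k$ and $2J^{-1}\operatorname{div}_\eta\bp^\a\nabla^b\dee\;\operatorname{div}_\eta\bp^\a\nabla^b\vv$; the Lemma~\ref{lemma_aenergy} combination then produces $\operatorname{curl}_\eta\bp^\a\nabla^b\dee$ in the anti-symmetric part---precisely the quantity your transport estimate controls, and the symmetric part is coercive on $|\nabla\bp^\a\nabla^b\dee|^2\gtrsim|\nabla\bp^\a\nabla^b\deta|^2$ since $\ss_i\ge c>0$. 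Choosing the multiplier $\mmm$ to align the coercive energy identity with the transported vorticity is the key new step in passing from $\Lambda=\operatorname{Id}$ to general $\Lambda$, and without it Step~2 of your outline does not close.
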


\subsubsection{Curl estimates for the general momentum equation \eqref{ceuler_gen}} 

The presence of the affine matrix $\A(t)$ in the momentum equation \eqref{ceuler_gen} changes the method in which we obtain the curl estimates.  For
general affine motion, we have the following
\begin{lemma}[Curl estimates]\label{lemma_curl_gen} For all $t\ge 0$, 
\begin{align*} 
  &\underbrace{ \sum_{b=0}^7\sum_{\a=0}^{7-b} }_{b+\a>0} \|\d^{{\frac{b+2}{2}} } \operatorname{curl}   \nabla ^{b} \bp^\a \dee ( \cdot ,t)\|_0^2  
 +\sum_{b=0}^8  \| \sqrt{{\frac{\bar t^2}{\det \A}} }  \d^{{\frac{b+2}{2}} }\operatorname{curl}  \nabla ^b \bp^{8-b} \dee( \cdot ,t)\|_0^2  \lesssim 
  \Ee(0)  + \sup_{s \in [0,t]} \P (\Ee(s)) \,.
 \end{align*} 
\end{lemma} 
\begin{proof} 
We shall explain the modifications of the proof of Lemma \ref{lemma_curl} that are required to prove this inequality for the general affine motion.   Recall
that for isentropic flows, the Eulerian vorticity $\omega= \operatorname{curl} u$ satisfies the evolution equation $\p_t \omega + u \cdot \nabla \omega+ 
\omega \operatorname{div} u = \omega \cdot \nabla u$.   This means that with regards to the Lagrangian variable $\xi(x,t)$, we have that
$\operatorname{curl} _\xi \p_t \xi =  \mathcal{J} ^{-1}  \nabla \xi \cdot \omega_0$,  where
$\omega_0^i = \varepsilon_{ijk} \left( \A^k_l (0) (u_0)^l,_s (\A ^{-1}) ^s_j(0)  + \dot\A^k_s(0) (\A ^{-1} )^s_j(0)    \right)$.

In components, we have that
$$
\varepsilon_{ijk} \p_t \xi^k,_r B^r_j = \mathcal{J} ^{-1} \xi^i,_k \omega_0^k \,.
$$
Now, we use the fact that $ \mathcal{J} = \det \A \, J$ and  $B^r_j = A^r_s (\A ^{-1} )^s_j$ to conclude that
$$
\varepsilon_{ijk} \p_t \xi^k,_r A^r_s ({\A }^{-1} )^s_j = {\frac{1}{\det \A}}  J ^{-1} {\A}^i_l \eta^l,_r \omega_0^r \,.
$$
Then, the identity $ \p_t \xi = \A v + \dot \A \eta$ shows that
\begin{equation}\label{ssss1}
\varepsilon_{ijk} \A^k_m v^m,_r A^r_s ({\A }^{-1} )^s_j = - \varepsilon_{ijk} \dot \A^k_s ({\A }^{-1} )^s_j +  {\frac{1}{\det \A}}  J ^{-1} {\A}^i_l \eta^l,_r \omega_0^r \,.
\end{equation} 
and hence that
\begin{align} 
\varepsilon_{ijk} \A^k_m v^m,_s  ({\A }^{-1} )^s_j 
&=- \varepsilon_{ijk} \A^k_m v^m,_r (A^r_s- \delta ^r_s) ({\A }^{-1} )^s_j  - \varepsilon_{ijk}  \dot\A^k_s ({\A }^{-1} )^s_j +  {\frac{1}{\det \A}}  J ^{-1} {\A}^i_l \eta^l,_r \omega_0^r  \nonumber \\
& =- \varepsilon_{ijk} \A^k_m v^m,_r \int_0^t \p_t A^r_s(t') dt' ({\A }^{-1} )^s_j  - \varepsilon_{ijk} \dot \A^k_s ({\A }^{-1} )^s_j +  {\frac{1}{\det \A}}  J ^{-1} {\A}^i_l \eta^l,_r \omega_0^r \,.
\nonumber
\end{align} 
Next, using the identities \eqref{cs1}, we have that
\begin{align*} 
\varepsilon_{ijk} \A^k_m v^m,_s  ({\A }^{-1} )^s_j 
&= \varepsilon_{ijk} \left( t \A_1 + \bar \B_1(t)\right)^k_ m  v^m,_s  \left( t ^{-1} \A_1 ^{-1}  + \bar \B_4(t) \right)^s_j \\
& = \varepsilon_{ijk} (\A_1)^k_m v^m,_s  ({\A_1 }^{-1} )^s_j  +\bar\F_{im}^s(t) v^m_s
\end{align*} 
and
$$
\frac{d^k\bar\F_{im}^s}{dt^k}(t)= O(t^{-1-k}) \text{ as } t \to \infty \,.
$$

We consider the third component of this curl vector corresponding to $i=3$. With \eqref{cs2}, we have that
\begin{align*} 
{\frac{\ss_2}{\ss_1}} v^2,_1 - {\frac{\ss_1}{\ss_2}} v^1,_2 &= - \bar\F_{3\, m}^s(t) v^s,_m - \varepsilon_{3jk} \A^k_m v^m,_r \int_0^t \p_t A^r_s(t') dt' ({\A }^{-1} )^s_j  \\
& \qquad - \varepsilon_{3jk} \dot \A^k_s ({\A }^{-1} )^s_j +  {\frac{1}{\det \A}}  J ^{-1} {\A}^i_l \eta^3,_r \omega_0^r \,.
\end{align*} 
We multiply this equation by $\ss_1 \ss_2$ to obtain that\
\begin{align*} 
[\operatorname{curl} \vv]^3:=
\ss_2^2 v^2,_1 - \ss_1^2 v^1,_2 &= - \ss_1 \ss_2  \bar\F_{3\, m}^s(t) v^s,_m - \ss_1\ss_2 \varepsilon_{3jk} \A^k_m v^m,_r \int_0^t \p_t A^r_s(t') dt' ({\A }^{-1} )^s_j  \\
& \qquad - \ss_1\ss_2 \varepsilon_{3jk} \dot \A^k_s ({\A }^{-1} )^s_j +  {\frac{\ss_1\ss_2}{\det \A}}  J ^{-1} {\A}^i_l \eta^3,_r \omega_0^r  \,.
\end{align*} 

As before, the precise structure of the right-hand side of the above identity does not play any role; as such, in order to simplify the presentation, we write
\begin{align*} 
\operatorname{curl} \vv & \sim \F \nabla v+ \A \nabla v \int_0^t A \, \nabla v\, A dt' \, \A ^{-1}  + \dot\A \A ^{-1} + (\det \A) ^{-1}  J ^{-1} \A \nabla \eta \omega_0 \,.
\end{align*} 
Since all of the curl estimates utilize space differentiation, the term $ \dot\A \A ^{-1} $ can be removed, and we see that
\begin{align*} 
\operatorname{curl} \vv 
&\sim\F t ^{-1}  \A ^{-1}(t \A \nabla v)+ \A \nabla v \int_0^t A \, t ^{-1} \A ^{-1}  \,(t \A \nabla v)\, A dt' \, \A ^{-1}   + (\det \A) ^{-1}  J ^{-1} \A \nabla \eta \omega_0 \,,
\end{align*} 
and hence
\begin{align} 
\operatorname{curl} \dee 
&\sim \int_0^t  \F t ^{-1}  \A ^{-1}(t \A \nabla v) dt'  + \int_0^t  (\det \A) ^{-1}  J ^{-1} \A \nabla \eta \omega_0 dt'  \nonumber \\
& \qquad
+ \int_0^t  \A \nabla v \int_0^{t'} A \, t ^{-1} \A ^{-1}  \,(t \A \nabla v)\, A dt'' \, \A ^{-1} dt'  \nonumber \\
&\sim \int_0^t  \F t ^{-1}  \A ^{-1}(t \A \nabla v) dt'  + \int_0^t  (\det \A) ^{-1}  J ^{-1} \A \nabla \eta \omega_0 dt'  \nonumber \\
& \qquad
+ \int_0^t \left[ t ^{-1}  (t \A \nabla v ) \int_0^{t'} A \, t ^{-1} \A ^{-1}  \,(t \A \nabla v)\, A dt'' \, \A ^{-1} \right]dt'   \,.
 \label{cs3} 
\end{align} 
Notice that by \eqref{cs1},  $ \int_0^t {t' }^{-1} |\A ^{-1} (t')| dt' \le C  < \infty$ for $ t \ge 0$, and so with $ t ^{-1}| \A ^{-1} |$ replacing $ \alpha ^{-2} $
and $\sqrt{{\frac{t^2}{\det \A}} }$ replacing $ \alpha ^ {-\frac{1}{2}}$,
we see that \eqref{cs3} has the same derivative count and time-integrability properties as \eqref{sss2}.

The weighted derivative estimates for $\operatorname{curl} \dee$ then follow from the same procedure as in the proof of Lemma  \ref{lemma_curl}, and
we do not repeat them here.
\end{proof}

\subsubsection{Modifications to the energy estimates for \eqref{ceuler_gen}}  
In order to make use of our curl estimates for $\dee$, we shall multiply  \eqref{vort_gen} by $\Lambda $ to obtain
$$
\Lambda ^l_i (\A^T)^i _s \A^s_j \p_t v^j + 2\Lambda ^l_i(\A^T)^i _s \dot\A^s_j v^j + {\frac{1}{\det \A}} \Lambda ^l_i \eta^i 
+{\frac{2 \det \A_1^2}{\det \A}}  \Lambda ^l_i  A^k_i  \left(\d (J\det(\A_1^2))^{-1}\right),_k  =0  \,.
$$
By \eqref{cs4}, this is equivalent to 
$$
\Lambda ^l_i (\A^T)^i _s \A^s_j \p_t v^j + 2\Lambda ^l_i(\A^T)^i _s \dot\A^s_j v^j + {\frac{1}{\det \A}} \Lambda ^l_i \eta^i 
+{\frac{2 \det \A_1^2}{\det \A}} \AA^k_l  \left(\d \JJ^{-1}\right),_k  =0  \,,
$$
which can also be written as
\begin{equation} \label{cs5}
\Lambda ^l_i (\A^T)^i _s \A^s_j \p_t v^j + 2\Lambda ^l_i(\A^T)^i _s \dot\A^s_j v^j + {\frac{1}{\det \A}} \Lambda ^l_i \eta^i 
+{\frac{ \det \A_1^2}{\det \A}} \d ^{-1}  \aa^k_l  \left(\d^2 \JJ^{-2}\right),_k  =0  \,.
\end{equation} 

Following the energy estimates in the case of simple affine motion, we let $\bp ^\a \nabla ^b$ act on \eqref{cs5}, multiply by 
$\bar t^2 \d^{b+1}\bp ^\a \nabla ^b \vv^l$ and integrate over $\Omega_0$.    Thanks to Lemma \ref{lemma_curl_gen} and Corollary \ref{cor_main}, the difficult estimates involving the  nonlinear term
${\frac{ \det \A_1^2}{\det \A}} \d ^{-1}  \aa^k_l  \left(\d^2 \JJ^{-2}\right),_k$ are done in the identical fashion.

Let us now explain how the other terms are dealt with.    As will be clear, it suffices to consider the 
 $L^2(\Omega_0)$ estimates for \eqref{cs5}.
Since $\vv^l = ( \Lambda ^{-1} )^l_r v^r$, we have that
\begin{align} 
&\int_{\Omega_0}\Lambda ^l_i (\A^T)^i _s \A^s_j \p_t v^j \ t^2 \d   \vv^l dx =
\int_{\Omega_0} \bar t^2 \d \, \Lambda ^l_i (\A^T)^i _s \A^s_j \p_t v^j \  ( \Lambda ^{-1} )^l_r v^r dx   = \int_{\Omega_0}  \bar t^2 \d \,  (\A^T)^i _s \A^s_j \p_t v^j \,  v^i dx \nonumber \\
& \qquad = \int_{\Omega_0}  \bar t^2 \d \,  \A^s_j \p_t v^j \,  \A^s_i v^i dx = \int_{\Omega_0} \bar  t^2 \d \,  \p_t( \A^s_j v^j) \,  \A^s_i v^i dx
- \int_{\Omega_0} \bar t^2 \d \, \dot \A^s_j  v^j \,  \A^s_i v^i dx \nonumber \\
&\qquad
={\frac{1}{2}}  \int_{\Omega_0} \bar  t^2 \d \,  \p_t | \A v|^2 dx - \int_{\Omega_0}  \bar t^2 \d \, \dot \A^s_j  v^j \,  \A^s_i v^i dx  \nonumber \\
&\qquad
={\frac{1}{2}}  \frac{d}{dt}  \int_{\Omega_0}  \bar t^2 \d \, | \A v|^2 dx-  \int_{\Omega_0}  \bar t \d \, | \A v|^2 dx - \int_{\Omega_0} \bar t^2 \d \, \dot \A^s_j  v^j \,  \A^s_i v^i dx
\label{cs6}
\end{align} 
Next,
\begin{align} 
&2 \int_{\Omega_0} \Lambda ^l_i(\A^T)^i _s \dot\A^s_j v^j \ t^2 \d   \vv^l dx =
2 \int_{\Omega_0} t^2 \d \,  \Lambda ^l_i(\A^T)^i _s \dot\A^s_j v^j \  ( \Lambda ^{-1} )^l_r v^r dx 
 =2 \int_{\Omega_0}  t^2 \d \,  \dot\A^s_j v^j  \A^s_i v^i dx\,.  \label{cs7} 
\end{align} 
Finally,
\begin{align} 
 & \int_{\Omega_0}(\det\A) ^{-1}   \Lambda ^l_i \eta^i \ \bar t^2 \d   \vv^l dx
 =   \int_{\Omega_0}( \det\A) ^{-1} \bar t^2 \d   \Lambda ^l_i \eta^i  \  ( \Lambda ^{-1} )^l_r v^r dx =  \int_{\Omega_0}(\det\A)^{-1}\bar  t^2 \d\, \eta^i  \  v^i dx \nonumber\\
 & \qquad   = {\frac{1}{2}}  \int_{\Omega_0}(\det\A)^{-1}  \bar t^2 \d\, \p_t| \eta|^2 dx 
 = {\frac{1}{2}}  \int_{\Omega_0}(\bar t \det\A_1)^{-1}   \d\, \p_t| \eta|^2 dx + {\frac{1}{2}}  \int_{\Omega_0} \bar \B_6(t)   \d\, \p_t| \eta|^2 dx\nonumber  \\
  & \qquad = {\frac{1}{2\det\A_1}} \frac{d}{dt}   \int_{\Omega_0}{\bar t}^{-1}   \d\, | \eta|^2 dx +
  {\frac{1}{2\det\A_1}}   \int_{\Omega_0}\bar t^{-2}   \d\, | \eta|^2 dx +\int_{\Omega_0} \bar \B_6(t)   \d\, \eta \cdot v\, dx  \,. \label{cs8}
\end{align} 
Summing over the identities \eqref{cs6}--\eqref{cs8}, we have the following sum:
\begin{align*} 
&
{\frac{1}{2}} \frac{d}{dt}\left( \| d^ {\frac{1}{2}}\bar t\A v\|_0^2+ (\det\A_1) ^{-1}  \| d^ {\frac{1}{2}} \bar t^ {-\frac{1}{2}} \eta\|_0^2  \right)
-  \int_{\Omega_0}  \bar t \d \, | \A v|^2 dx + \int_{\Omega_0}  \bar t^2 \d \, \dot \A^s_j  (\A ^{-1} )^j_r \A^r_l v^l \,  \A^s_i v^i dx \\
& \qquad 
+ (2\det\A_1) ^{-1}  \| d^ {\frac{1}{2}} \bar t^ {-1} \eta\|_0^2 +\int_{\Omega_0} \bar \B_6(t)   \d\, \eta \cdot v\, dx \,.
\end{align*} 
Now, using (\ref{cs1}c), we have that
 \begin{align*} 
 -  \int_{\Omega_0} \bar t \d \, | \A v|^2 dx + \int_{\Omega_0} \bar t^2 \d \, \dot \A^s_j  (\A ^{-1} )^j_r \A^r_l v^l \,  \A^s_i v^i dx
 = \int_{\Omega_0} \bar  t^2 \d \,( \bar \B_5 )_{rs} \A^r_l v^l \,  \A^s_i v^i dx \,.
 \end{align*} 
 Hence, the sum of  the identities \eqref{cs6}--\eqref{cs8} gives
 \begin{align*} 
&
{\frac{1}{2}} \frac{d}{dt}\left( \| d^ {\frac{1}{2}} \bar t\A v\|_0^2+ (\det\A_1) ^{-1}  \| d^ {\frac{1}{2}} \bar t^ {-\frac{1}{2}} \eta\|_0^2  \right) + (2\det\A_1) ^{-1}  \| d^ {\frac{1}{2}}\bar  t^ {-1} \eta\|_0^2 \\
& \qquad 
+\int_{\Omega_0} \bar \B_6(t)   \d\, \eta \cdot v\, dx  +  \int_{\Omega_0}  t^2 \d \,( \bar \B_5 )_{rs} \A^r_l v^l \,  \A^s_i v^i dx\,.
\end{align*} 
This sum is integrated in time, and we use the fact that $ \bar \B_5(t)$ and $  \bar \B_6(t)$ are $O(\bar t ^{-2} )$ as $t \to \infty $ and hence the time integral
of $ \bar \B_5(t)$ and $  \bar \B_6(t)$ is uniformly bounded as $t \to \infty $. 

The higher-order estimates, applying the operator $t^2 \d^{b+1}\bp ^\a \nabla ^b \vv^l$ to \eqref{cs5} and testing with 
$\bar t^2 \d^{b+1}\bp ^\a \nabla ^b \vv^l$ work in the identical fashion as it is only the commutation of time-derivatives that differs here from the simple affine case.
Hence, the energy estimates follow in the identical fashion as for the simple affine matrix.

\subsection{Global existence and stability for  all $\boldsymbol{\gamma} \boldsymbol{>} {\frac{5}{3}} $}   For $1 < \gamma \le {\frac{5}{3}} $, global
existence and stability of affine solutions was proven in \cite{HaJa2016}.  We now establish this result for all $\gamma > {\frac{5}{3}} $.

For general $ \gamma$, we 
set $\rho_0(x)=\d(x)^ {\frac{1}{\gamma -1}} $, and write the Euler equations as
\eqref{ce03d} as
\begin{equation}\label{ce3d_gamma}
 \p^2_t\xi^i+ \d(x)^ {-\frac{1}{\gamma -1}}   b^k_i  \left(\d(x)^ {\frac{\gamma}{\gamma -1}}  \mathcal{J} ^{-\gamma}\right),_k =0 \ \  \text{ in } \Omega_0  
  \times (0,T] \,.
\end{equation} 
The perturbation $\eta(x,t)$ then satisfies
\begin{align} 
&(\A^T)^i _s \A^s_j \p_t v^j + 2(\A^T)^i _s \dot\A^s_j v^j +(\det \A)^{1-\gamma} \eta^i + (\det \A)^{1-\gamma}  \d(x)^ {-\frac{1}{\gamma -1}} a^k_i   \left(\d(x)^ {\frac{\gamma}{\gamma -1}} J ^{-\gamma}\right),_k  =0 \,. \label{magoo10}
\end{align} 


\begin{definition}[Norm for the case that ${\mathbf \gamma > {\frac{5}{3}} }$ norm]
\begin{align} 
\Eeg^K(t) &=\sum_{b=0}^K\sum_{\a=0}^{K-b} \left( \|\d^{{\frac{b(\gamma-1)+1}{2\gamma-2}} }  \nabla ^b \bp^\a \dee( \cdot ,t)\|_0^2  
+  \|\bar t\, \d^{{\frac{b(\gamma-1)+1}{2\gamma-2}} }  \nabla ^b \bp^a \A v( \cdot ,t)\|_0^2 \right) \nonumber \\
 &+
 \sum_{b=1}^{K+1}  \| \bar t\, (\det \A)^{\frac{1-\gamma}{2}}  \d^{{\frac{b(\gamma-1)+1}{2\gamma-2}} } \nabla ^b \bp^{K+1-b} \dee( \cdot ,t)\|_0^2  
\,.\nonumber
\end{align} 
\end{definition} 
The curl estimates are modified as follows:
\begin{lemma}[Curl estimates for $\gamma > {\frac{5}{3}} $]\label{lemma_curlg} For $K\ge 8$,  $t\ge 0$,  and $\gamma > {\frac{5}{3}} $, 
\begin{align*} 
  &\underbrace{ \sum_{b=0}^{K-1}\sum_{\a=0}^{K-1-b} }_{b+\a>0}
   \|\d^{{\frac{b(\gamma-1)+2}{2\gamma-2}} } \operatorname{curl}   \nabla ^{b} \bp^\a \dee ( \cdot ,t)\|_0^2  
 +\sum_{b=0}^K  \|  \bar t\, (\det \A)^{\frac{1-\gamma}{2}} \d^{{\frac{b(\gamma-1)+2}{2\gamma-2}} }\operatorname{curl}  \nabla ^b \bp^{K-b} \dee( \cdot ,t)\|_0^2  \\
 &\qquad\qquad
  \lesssim 
  \Eeg^K(0)  + \sup_{s \in [0,t]} \P (\Eeg^K(s)) \,.
 \end{align*} 
\end{lemma}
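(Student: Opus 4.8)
The plan is to follow the proof of Lemma~\ref{lemma_curl} together with the ``Modifications to the vorticity analysis'' paragraph for $\gamma=2$, since in the vorticity the parameter $\gamma$ enters only through the choice of weights. First I would derive the basic vorticity identity. Applying the Lagrangian curl to \eqref{magoo1} in its vorticity-convenient form (i.e.\ \emph{before} dividing by $\alpha^{3\gamma-5}$), the pressure term $\tfrac{\gamma}{\gamma-1}A^k_i(\d J^{-(\gamma-1)}),_k$ is a Lagrangian gradient and is annihilated, while the undifferentiated term $\eta^i$ carries no factor of $\mm$ and satisfies $\operatorname{curl}_\eta\eta=0$. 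Using that $\mm=\Lambda^2$ is diagonal, exactly as in the derivation of \eqref{ssss1}, and writing $\vv^j=\ss_j^2 v^j$, the antisymmetrized equation for each component reduces to $\partial_t X+2\tfrac{\dot\alpha}{\alpha}X=(\text{quadratic in }v)$, where $X$ is a contraction of $v,_r$ with $A^r_k$. Since the ratio $2\dot\alpha/\alpha$ is independent of $\gamma$, the $\alpha^2$ integrating factor works verbatim, and two integrations in time give, for each component of $\operatorname{curl}\dee$, precisely an identity of the form \eqref{ssss1}, with $\beta(t)=\int_0^t\alpha^{-2}(s)\,ds\le C<\infty$ since $\alpha(t)\sim 1+t$; in particular $\operatorname{curl}\dee=\operatorname{curl}\ee$ and $\operatorname{curl}\dee(\cdot,0)=0$.

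Next I would run the weighted, differentiated estimates. For each admissible $b,\a$ I apply $\d^{(b(\gamma-1)+2)/(2\gamma-2)}\nabla^b\bp^\a$ to the vorticity identity (inserting in addition the factor $\alpha^{(3-5\gamma)/2}$ when $b+\a=K$), take the $L^2(\Omega_0)$ inner product with the same weighted quantity, and estimate the resulting time-integral terms (the analogues of $\mathcal{J}^{b,\a}_1,\mathcal{J}^{b,\a}_2$) exactly as in the proof of Lemma~\ref{lemma_curl}: distribute the derivatives by the Leibniz rule, integrate by parts in time where needed, and bound each product by H\"older's inequality combined with the weighted embedding \eqref{w-embed}, the higher-order Hardy inequality of Lemma~\ref{hardy}, and the Sobolev embedding theorem. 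The point is that the exponents $(b(\gamma-1)+2)/(2\gamma-2)$ and $(3-5\gamma)/2$ are precisely those appearing in $\Eeg^K$, so that after the Leibniz expansion every factor of the form $\alpha^2\d^{\beta}\nabla^j\bp^{\a'}v$ or $\alpha^{(3-5\gamma)/2}\d^{\beta}\nabla^j\bp^{\a'}\dee$ that arises is, or is controlled via \eqref{w-embed} and Lemma~\ref{hardy} by, a summand of $\Eeg^K$. The integrability of $\alpha^{-2}$ on $[0,\infty)$, the bound $\sqrt{\alpha(t')/\alpha(t)}\le C$ for $t'\le t$ (from $\dot\alpha\ge0$), the bound $\alpha^{(3-5\gamma)/2}(t)\le1$ (valid for every $\gamma>1$ since $\alpha\ge1$), and \eqref{Abound}, \eqref{Jbound}, \eqref{aJ2} are invoked repeatedly to absorb the remaining powers of $\alpha$. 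As in Lemma~\ref{lemma_curl}, the one delicate term is the analogue of $\mathcal{K}_1$: after integrating by parts in time in the quadratic integral one meets $\partial_t(\alpha^2\nabla v)$, which I would replace using \eqref{magoo10} solved for it in terms of $\nabla\dee$ and $\nabla\big(A\,\nabla(\d J^{-(\gamma-1)})\big)$, the latter being estimated — just as for $\gamma=2$ — by \eqref{Abound}, \eqref{Jbound}, \eqref{aJ2} and the Sobolev embedding theorem.

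The main obstacle is the bookkeeping of the $\alpha$-powers: one must check that in every term produced by integration by parts in time the surviving time factor is genuinely integrable rather than a growing power of $\alpha$. The tightest case is pairing the top-order $\nabla\dee$ (weighted by $\alpha^{(3-5\gamma)/2}$) against $\nabla v$ (weighted by $\alpha^2$); after using $\sqrt{\alpha(t')/\alpha(t)}\le C$ the leftover factor must be dominated by $\alpha^{-2}$, and it is exactly the choice of exponent $(3-5\gamma)/2$, rather than the value $-\tfrac12$ used in the $\gamma=2$ warm-up, that makes this hold for all $\gamma>\tfrac53$. Everything else is a routine transcription of the $\gamma=2$ argument, and, as there, the two-dimensional statement follows by reading $\operatorname{curl}\dee$ as $\nabla^\perp\!\cdot\dee$.
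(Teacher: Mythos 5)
Your proposal is correct and takes essentially the same route as the paper, which simply states that the proof is identical to that of Lemma~\ref{lemma_curl} (together with the modifications of \S4.6.1 for general $\Lambda$); you have expanded that one-line reference into a reasonable sketch, correctly observing that the $\gamma$-dependence enters only through the weight exponents, that the underlying integrating-factor structure of the vorticity identity is unchanged because dividing the curl equation by $\alpha^{3\gamma-3}$ always produces $\partial_t(\alpha^2\cdot)$, and that the key time-integral bookkeeping reduces to $\alpha^{p}(t)\alpha^{-p}(t')\le 1$ for $p<0$, together with the integrability of $\alpha^{-2}$.
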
 

\noindent
The proof of this lemma is identical to the proof of Lemma \ref{lemma_curl_gen}.    For the energy estimates, we let $\bp ^\a \nabla ^b$ act on on  \eqref{magoo10},
 then multiply by 
$\bar t^2\d^{b+1/(\gamma-1)}\bp ^\a \nabla ^b v^i$, and integrate over $\Omega_0$.    The procedure for closing the energy estimates is again identical to the case $\gamma=2$ which was explained above; hence, we have
the following
\begin{theorem}[Perturbations of general affine motion for $\gamma> {\frac{5}{3}} $]\label{thm_gg_gen}For $\gamma> {\frac{5}{3}} $, $K \ge 8$, and  $ \epsilon >0$ taken sufficiently small, if the initial data satisfies
 $\Eeg^K(0) <  \epsilon $, then there exists a global  solution $\eta(x,t)$ of \eqref{ceuler03D}  such that $\Eeg^K(t) \le \epsilon $ for all $t \ge 0$.    In particular,
 the flow map $\eta \in C^0([0, \infty ), C^2(\Omega_0))$ and the moving vacuum boundary $\Gamma(t)$ is of class $C^2$ for all $t\ge 0$.
 The composition
 $\xi(x,t)= \alpha (t) \Lambda  \eta(x,t)$  defines a  global-in-time solution of  the Euler equations \eqref{ceuler3d}.   
  If the initial data satisfies $\Eeg^9(t) \le C$, then the solution $ \xi (x,t)$ is unique.
\end{theorem}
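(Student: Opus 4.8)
The plan is to run the same nine-step scheme as in the proof of Theorem~\ref{mainthm}, carrying along the two structural modifications already isolated above. First, one passes to the rescaled unknowns $\dee = \Lambda^2\eta$, $\vv = \Lambda^2 v$, which diagonalize the $\Lambda$-dependence exactly as in the general $\gamma=2$ case, so that the highest-order contribution $I_1+I_2$ of the pressure term collapses, via Lemma~\ref{lemma_aenergy}, to a perfect time derivative of the weighted $|\nabla_\eta\bp^\a\nabla^b\dee|^2 + |\operatorname{div}_\eta\bp^\a\nabla^b\dee|^2 - 2|\operatorname{curl}_\eta\bp^\a\nabla^b\dee|^2$ energy. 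Second, one divides \eqref{magoo1} by $\alpha^{3\gamma-5}$ to obtain \eqref{magoo10}; its first two terms $\alpha^4\mmmm\,\p_t v + 2\alpha^3\dot\alpha\,\mmmm v$ carry residual damping coefficient $2-\tfrac{1}{2}\cdot 4 = 0$, nonnegative for every $\gamma>\tfrac{5}{3}$ — whereas the undivided equation has coefficient $2 - \tfrac{3\gamma-1}{2}$, negative precisely when $\gamma>\tfrac{5}{3}$, which is the obstruction reported in \cite{HaJa2016}. The cost of this division is the appearance of the time weights $\alpha^2$ on $v$, $\alpha^{(5-3\gamma)/2}$ on the coefficient of $\deta$ at top order, and the harmless factor $\alpha^{5-3\gamma}\le1$ in front of $\eta^i$ and the pressure term; since $\int_0^t\alpha^{-2}\,ds\le C$ uniformly in $t$ by Lemma~\ref{lemma_affine}, every time integration closes.

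Concretely: in Step~1 I would mollify $u_0$ with width $1/|\ln\kappa|$ and invoke the local well-posedness theorem of \cite{CoSh2012} to produce a smooth local solution $(\eta_\kappa,v_\kappa)$ regular enough to differentiate $K+1$ times in space. Step~2 is the curl estimate, Lemma~\ref{lemma_curlg}, proved exactly as Lemma~\ref{lemma_curl}: apply $\operatorname{curl}_\eta$ to the vorticity form of \eqref{magoo10}, use that $\operatorname{curl}_\eta$ annihilates $\nabla_\eta$ and $\eta$, integrate twice in time using the integrating factor built from $\dot\alpha\ge0$, and bound the resulting integrals by Cauchy--Schwarz, H\"older, the weighted embeddings, and the integrability of $\alpha^{-2}$, integrating by parts in time in the hardest tangential-derivative case. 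Then, for $b=0,\ldots,K$ and $\a=0,\ldots,K-b$ in increasing order of $b$, I would let $\bp^\a\nabla^b$ act on \eqref{magoo10} and test against $\d^{\,b+1/(\gamma-1)}\bp^\a\nabla^b v^i$, using the fractional-power analogue of Lemma~\ref{cor1} — identity \eqref{ss100} with $\ell$ replaced by $\gamma/(\gamma-1)$ and $\ell-1$ by $1/(\gamma-1)$, the same chain-rule computation, whose Kronecker-delta coefficients $\tfrac{\gamma}{2(\gamma-1)},\ldots$ remain positive for $\gamma>1$ — so that at top order no derivative lands on the weight. Lemma~\ref{lemma_aenergy} yields the fundamental energy; the $\operatorname{curl}_\eta$ term is absorbed by Lemma~\ref{lemma_curlg}; the top-order error terms of tangential type are reduced via Lemmas~\ref{lemma_atan} and~\ref{lemma_tan} to quantities already controlled at the previous derivative level; and all remaining errors are handled by H\"older together with the weighted embeddings \eqref{w-embed}, the fractional embedding (Theorem~5.3 of \cite{KuPe2003}), and Sobolev embedding. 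As in Step~7, these then upgrade to control of every auxiliary term in $\Eeg^K(t)$; here the inequality $(3\gamma-2)/(2\gamma-2)>1/2$, valid precisely for $\gamma>\tfrac{5}{3}$, is what lets Sobolev embedding give $\|\alpha^2 v\|_{W^{1,\infty}}$ — hence $\nabla\p_t v$ pointwise bounded, the hypothesis needed to close the weighted $\operatorname{curl}\dee$ estimates.

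The bootstrap and closure run as in Steps~8--9: choosing $\vartheta$ small and then $\epsilon\ll\vartheta$, the differential inequality $\sup_{[0,t]}\Eeg^K \lesssim \Eeg^K(0) + \vartheta\sup_{[0,t]}\Eeg^K + \sup_{[0,t]}\P(\Eeg^K)$ forces $\Eeg^K(t)\lesssim\epsilon$ on a $\kappa$-independent interval $[0,T]$, and the standard continuation argument globalizes it; Sobolev embedding then improves \eqref{eta_bound2d}--\eqref{Jbound}, and since $\Eeg^K$ with $K\ge8$ controls $\eta$ in $C^2(\overline\Omega_0)$ one gets $\eta\in C^0([0,\infty);C^2(\Omega_0))$ via Theorem~A.2 of \cite{CoSh2014} and $\Gamma(t)=\eta(\Gamma_0,t)$ of class $C^2$; uniqueness under $\Eeg^9(t)\le C$ is Theorem~1 of \cite{CoSh2012}; and the limit $\kappa\to0$ by compactness recovers a genuine solution of \eqref{ceuler03D}, with $\xi=\alpha\Lambda\eta$ solving \eqref{ceuler3d}. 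The main obstacle I anticipate is bookkeeping rather than conceptual: propagating the precise fractional $\d$-exponents through every integration by parts so that (i) the fractional version of Lemma~\ref{cor1} keeps all weights on the test function at top order with good-sign remainder terms, and (ii) at each derivative level the weighted and fractional embedding theorems — codimension-one statements depending only on the normal behavior of the weight — produce exactly the auxiliary norms appearing in $\Eeg^K$, which is where the numerology $\gamma>\tfrac{5}{3}$ and the prior division by $\alpha^{3\gamma-5}$ are both essentially used.
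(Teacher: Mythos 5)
Your proposal follows the paper's proof of Theorem~\ref{thm_gg_gen} essentially verbatim: the paper itself gives only a sketch ("let $\bp^\a\nabla^b$ act on \eqref{magoo10}, multiply by $\d^{b+1/(\gamma-1)}\bp^\a\nabla^b v^i$, integrate over $\Omega_0$; the procedure is identical to $\gamma=2$"), and you reproduce exactly that plan, including the rescaling $\dee=\Lambda^2\eta$, the division by $\alpha^{3\gamma-5}$ that zeroes out the would-be anti-damping coefficient $2-\tfrac12\cdot4$, the fractional extension of the weight identity \eqref{ss100} (which indeed holds for non-integer $\ell$ by the same computation), and the curl/tangential/embedding machinery from Lemmas~\ref{lemma_aenergy}--\ref{lemma_curlg}. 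The only slip is the numerology near the end: $(3\gamma-2)/(2\gamma-2)>1/2$ holds for all $\gamma>1$, not "precisely for $\gamma>5/3$"; the condition that actually makes $K=8$ sufficient is $K>(6\gamma-5)/(\gamma-1)$ from the weighted embedding \eqref{w-embed}, which for $K=8$ requires $\gamma>3/2$ and hence is comfortably met when $\gamma>5/3$ --- so the conclusion stands even though the stated inequality is not the right one.
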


For  $1<\gamma \le {\frac{5}{3}} $, as $\gamma$ becomes smaller, we must increase the number, $K$, of space differentiated problems.  
 The higher-order energy $\Eeg^K(t)$ ensures that
$$
\d^{\frac{1+ K(\gamma-1)}{2\gamma -2}}  \nabla ^K v( \cdot , t) \in L^2(\Omega_0).
$$
From the estimates for $ \operatorname{curl} \dee$ given in Lemma \ref{lemma_curl}, it is necessary to have $ \nabla \p_t v ( \cdot , t) \in L^ \infty (\Omega_0)$,
which, in turn, necessitates that $ v( \cdot , t) $ is in $H^s(\Omega_0)$ for $ s > 3$ by the Sobolev embedding theorem.
In order for $v( \cdot , t) \in H^s(\Omega_0)$ for $s > 3$, according to \eqref{w-embed}, it is necessary that
$$
{\frac{1+ K(\gamma-1)}{2\gamma -2}}   < K - 3  \text{ which implies that } K > {\frac{6\gamma -5}{\gamma -1}} \,.
$$

\begin{definition}[Norm for $1< \gamma \le {\frac{5}{3}} $ in 3-d]
For $1 < \gamma \le {\frac{5}{3}} $ and for $K> {\frac{6\gamma -5}{\gamma -1}}$, we define the higher-order norm as
\begin{align}
\Eeg^K(t) &=\sum_{b=0}^K\sum_{\a=0}^{K-b} \left( \|\d^{{\frac{b(\gamma-1)+1}{2\gamma-2}} }  \nabla ^b \bp^\a \dee( \cdot ,t)\|_0^2  
+  \|  \bar t^{{\frac{3}{2}}(\gamma-1) } \,  \d^{{\frac{b(\gamma-1)+1}{2\gamma-2}} }  \nabla ^b \bp^a  \A v( \cdot ,t)\|_0^2 \right) \nonumber \\
 &+
 \sum_{b=1}^{K+1}  \|  \d^{{\frac{b(\gamma-1)+1}{2\gamma-2}} } \nabla ^b \bp^{K+1-b} \deta( \cdot ,t)\|_0^2  
\,.\nonumber
\end{align} 
\end{definition} 
We have the corresponding curl lemma, again proved in the identical way as Lemma \ref{lemma_curl_gen}.
\begin{lemma}[Curl estimates for $1< \gamma \le {\frac{5}{3}} $]\label{lemma_curlglow} For $K> {\frac{6\gamma -5}{\gamma -1}}$,  $t\ge 0$,  and 
$1< \gamma \le {\frac{5}{3}} $, 
\begin{align*} 
  &\underbrace{ \sum_{b=0}^{K-1}\sum_{\a=0}^{K-1-b} }_{b+\a>0}
   \|\d^{{\frac{b(\gamma-1)+2}{2\gamma-2}} } \operatorname{curl}   \nabla ^{b} \bp^\a \dee ( \cdot ,t)\|_0^2  
 +\sum_{b=0}^K  \|  \d^{{\frac{b(\gamma-1)+2}{2\gamma-2}} }\operatorname{curl}  \nabla ^b \bp^{K-b} \dee( \cdot ,t)\|_0^2  \\
 &\qquad\qquad
  \lesssim 
  \Eeg^K(0)  + \sup_{s \in [0,t]} \P (\Eeg^K(s)) \,.
 \end{align*} 
\end{lemma}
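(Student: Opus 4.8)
The plan is to repeat the proof of Lemma~\ref{lemma_curl} almost verbatim, replacing the $\gamma=2$ weights by the $\gamma$-dependent exponents appearing in $\Eeg^K(t)$ and checking that every power of $\alpha$ that occurs remains integrable on $[0,\infty)$ for $1<\gamma\le\tfrac53$. Because no anti-damping term arises in this range of $\gamma$, I would work with the undivided momentum equation \eqref{magoo1} in its curl-friendly form
\begin{equation}\nonumber
\alpha^{3\gamma-1}\mm^i_j\p_t v^j + 2\alpha^{3\gamma-2}\dot\alpha\,\mm^i_j v^j + \eta^i + \frac{\gamma}{\gamma-1}A^k_i\big(\d J^{-(\gamma-1)}\big),_k =0\,.
\end{equation}
Applying $\operatorname{curl}_\eta$ componentwise, exactly as in the modifications of the vorticity analysis for general affine motion, the term $\eta^i$ is annihilated by $\operatorname{curl}_\eta\eta=0$ and the gradient term because $\operatorname{curl}_\eta$ kills Lagrangian gradients. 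Dividing by $\alpha^{3\gamma-1}$, commuting $\p_t$ past $\operatorname{curl}_\eta$, and using the integrating factor $\alpha^2$ together with $\tfrac{2\dot\alpha}{\alpha}\ge0$, one obtains, modulo the harmless constants $\ss_i^2\ge c^2>0$ coming from $\mm=\Lambda^2$, an identity for $\operatorname{curl}\dee(\cdot,t)$ of exactly the shape of \eqref{sss2}/\eqref{ssss1}: schematically $\beta(t)\operatorname{curl}(\text{data}) + \int_0^t \vv^k,_r(A^r_j-\delta^r_j)\,dt' + \int_0^t\alpha^{-2}\int_0^{t'}\alpha^{3-3\gamma}\big(\text{quadratic in }\nabla\vv\big)\,dt''dt'$, where $\beta(t)=\int_0^t\alpha^{-2}\,ds\le C<\infty$.

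Next I would apply $\d^{\frac{b(\gamma-1)+2}{2\gamma-2}}\nabla^b\bp^\a$ to this identity, take the $L^2(\Omega_0)$ inner product with the same weighted curl, and use Cauchy--Schwarz to reduce to bounding the space-time integrals generated by the two nonlinear terms --- the exact analogues of $\mathcal{J}^{b,\a}_1$ and $\mathcal{J}^{b,\a}_2$ in the proof of Lemma~\ref{lemma_curl}. The $\d$-powers in $\Eeg^K(t)$ are chosen so that $\d^{\frac{b(\gamma-1)+2}{2\gamma-2}}\nabla^b\bp^\a\operatorname{curl}\dee$ and $\alpha^{\frac{3\gamma-1}{2}}\d^{\frac{b(\gamma-1)+2}{2\gamma-2}}\nabla^b\bp^\a\nabla\vv$ carry the same weight, so the spatial factors can be paired using the same H\"older triples ($L^2\times L^\infty$, $L^3\times L^6$, $L^6\times L^3$, $L^\infty\times L^2$) as in the $\gamma=2$ case, with the weighted embedding \eqref{w-embed}, the Hardy inequality of Lemma~\ref{hardy}, and the Sobolev embedding theorem furnishing the required norms. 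After writing $\nabla\vv$, resp.\ $\nabla\dee=\int_0^t\nabla\vv\,dt'$, in terms of the energy-controlled quantity $\alpha^{\frac{3\gamma-1}{2}}\nabla\vv$, the time factors collapse to $\alpha^{-2}$, $\alpha^{3-3\gamma}\alpha^{-(3\gamma-1)}=\alpha^{4-6\gamma}$, and $\alpha^{-\frac{3\gamma-1}{2}}$, all of which have exponent strictly below $-1$ as soon as $\gamma>1$, so every time integral converges uniformly on $[0,\infty)$ --- this is precisely the mechanism by which there is no upper bound on $\gamma$. For the hardest, top-order terms, in which all derivatives in $\nabla^b\bp^{K-b}$ land on $\operatorname{curl}\dee$, I would integrate by parts in time as in the $\mathcal{J}^{0,8}_2=\mathcal{K}_1+\mathcal{K}_2+\mathcal{K}_3$ decomposition and substitute the momentum equation for $\p_t(\alpha^{\frac{3\gamma-1}{2}}\nabla\vv)$, whose right-hand side --- involving $\nabla\deta$, $\nabla\eta$, and $\nabla\!\big(A\nabla(\d J^{-(\gamma-1)})\big)$ --- is bounded in $L^\infty(\Omega_0)$ by $\Eeg^K(t)^{1/2}+\Eeg^K(t)$ via \eqref{Abound}, \eqref{Jbound}, the analogue of \eqref{aJ2}, and Sobolev embedding.

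The main obstacle is purely bookkeeping: one must check that with the weight $\d^{\frac{b(\gamma-1)+2}{2\gamma-2}}$ every spatial integration by parts --- needed both to turn the identity for $\operatorname{curl}\dee$ into an estimate and to match the regularity of $\alpha^{\frac{3\gamma-1}{2}}\nabla\vv$ with that of $\nabla\dee$ --- places no derivative on $\d$ at the highest order, and that the H\"older pairings still close at every level $0\le b\le K$, $0\le\a\le K-b$. This is exactly where the hypothesis $K>\frac{6\gamma-5}{\gamma-1}$ is used, since by \eqref{w-embed} it guarantees $v(\cdot,t)\in H^s(\Omega_0)$ for some $s>3$, hence $\nabla\p_t v\in L^\infty(\Omega_0)$, which is the ingredient that makes the curl estimates close. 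Since the time-integrability is uniform in $\gamma>1$ and the weight choices are tailored precisely to these pairings, no analytic input beyond that of the proof of Lemma~\ref{lemma_curl} is required.
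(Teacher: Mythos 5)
Your proposal is correct and matches the paper's approach exactly: the paper itself offers no separate proof for this lemma, stating only that it is ``again proved in the identical way as Lemma~\ref{lemma_curl},'' and your proposal carries out precisely that adaptation, using the curl-friendly form of \eqref{magoo1}, the $\mm=\Lambda^2$ bookkeeping from the general-affine vorticity analysis, the same $\mathcal{J}^{b,\a}_1$/$\mathcal{J}^{b,\a}_2$ decomposition with integration by parts in time for the top-order terms, and the role of $K>\tfrac{6\gamma-5}{\gamma-1}$ in securing $\nabla\p_t v\in L^\infty$. The only cosmetic slip is a minor double-counting in your time-exponent arithmetic (the factor $\alpha^{3-3\gamma}$ already incorporates the rewriting in terms of $\alpha^{\frac{3\gamma-1}{2}}\nabla\vv$, so $\alpha^{4-6\gamma}$ overcounts), but after the integration by parts in time the surviving factors are all integrable for $\gamma>1$ as you correctly conclude, so the argument closes as in the $\gamma=2$ case.
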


 For the energy estimates, we let $\bp ^\a \nabla ^b$ act on \eqref{magoo10}, then multiply by 
$\bar t^{3(\gamma-1)} \d^{b+1/(\gamma-1)}\bp ^\a \nabla ^b v^i$, and integrate over $\Omega_0$.   We then have that
\begin{theorem}[Perturbations of affine motion for $1<\gamma\le {\frac{5}{3}} $]
For $1<\gamma\le {\frac{5}{3}} $, $K  > {\frac{6\gamma -5}{\gamma -1}}$, and  $ \epsilon >0$ taken sufficiently small, if the initial data satisfies
 $\Eeg^K(0) <  \epsilon $, then there exists a global  solution $\eta(x,t)$ of \eqref{ceuler03D}  such that $\Eeg^K(t) \le \epsilon $ for all $t \ge 0$.    In particular,
 the flow map $\eta \in C^0([0, \infty ), C^2(\Omega_0))$ and the moving vacuum boundary $\Gamma(t)$ is of class $C^2$ for all $t\ge 0$.
 The composition
 $\xi(x,t)= \alpha (t) \Lambda  \eta(x,t)$  defines a  global-in-time solution of  the Euler equations \eqref{ceuler3d}.   
  If the initial data satisfies $\Eeg^L(t) \le C$ for $L > {\frac{6\gamma -5}{\gamma -1}}+1$ , then the solution $ \xi (x,t)$ is unique.
\end{theorem}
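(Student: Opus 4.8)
The argument is the one already carried out for $\gamma=2$ in Theorem \ref{mainthm} and for $\gamma>\tfrac53$ in Theorem \ref{thm_gg_gen}; the only simplification here is that, because $1<\gamma\le\tfrac53$, testing the first two terms of \eqref{magoo1} against $v$ yields $\tfrac12\tfrac{d}{dt}(\alpha^{3\gamma-1}\|v\|_0^2)+(2-\tfrac{3\gamma-1}{2})\alpha^{3\gamma-2}\dot\alpha\|v\|_0^2$ with $2-\tfrac{3\gamma-1}{2}\ge0$, so no ``anti-damping'' occurs and no rescaling of \eqref{magoo1} is needed (also, the proofs are written in $3$-d but apply verbatim to $2$-d). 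First I would smooth the data as in \eqref{u0kappa}, obtaining from the local well-posedness theorem of \cite{CoSh2012} a smooth solution $(\eta_\kappa,v_\kappa)$ of \eqref{ceuler03D} (equivalently of \eqref{ce3d_gamma}) on a $\kappa$-dependent interval $[0,T]$, regular enough to take the $K$ spatial derivatives below, and then suppress the subscript $\kappa$. Next I would record the curl estimate: applying the components of $\operatorname{curl}_\eta$ to the vorticity form of \eqref{magoo1} kills the pressure term and leaves a transport-type equation for $\operatorname{curl}_\eta v$ with right-hand side quadratic in $\nabla v$; integrating twice in time, using $\int_0^\infty\alpha^{-2}<\infty$ and $\dot\alpha\ge0$ and integrating by parts in time, gives Lemma \ref{lemma_curlglow}. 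This is precisely the step that dictates the choice $K>\tfrac{6\gamma-5}{\gamma-1}$: the curl argument needs $\nabla\p_t v(\cdot,t)\in L^\infty$, hence $v(\cdot,t)\in H^s$ with $s>3$, and by the weighted embedding \eqref{w-embed} the top-order term $\d^{(1+K(\gamma-1))/(2\gamma-2)}\nabla^K v$ of $\Eeg^K$ delivers this exactly when $\tfrac{1+K(\gamma-1)}{2\gamma-2}<K-3$.

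For the energy estimates, for $0\le b\le K$ and $0\le\a\le K-b$ I would let $\bp^\a\nabla^b$ act on \eqref{magoo1} and test against $\d^{\,b+\frac1{\gamma-1}}\bp^\a\nabla^b v^i$, following Steps 2--6 of the proof of Theorem \ref{mainthm}. The structural tools are: Lemma \ref{lemma_aenergy}, which converts the top-order part of the pressure term into $\tfrac12\tfrac{d}{dt}$ of $|\nabla_\eta(\cdot)|^2+|\operatorname{div}_\eta(\cdot)|^2-2|\operatorname{curl}_\eta(\cdot)|^2$, the curl piece being absorbed via Lemma \ref{lemma_curlglow}; a fractional-power analogue of the commutator identity in Lemma \ref{cor1}, so tuned to $\rho_0=\d^{1/(\gamma-1)}$ and to the Hessian relation \eqref{grad_d} that integration by parts at top order never differentiates the weight and produces only tangential-derivative and curl-type errors; Lemmas \ref{lemma_atan} and \ref{lemma_tan}, which rewrite those tangential errors in terms of $\bp$-derivatives controlled at the previous induction step; and the integrability $\int_0^\infty\alpha^{-2}<\infty$ together with $\dot\alpha\ge0$, which gives the damping term $2\alpha^{3\gamma-2}\dot\alpha\,\mm^i_jv^j$ its favourable sign (here is where $\gamma\le\tfrac53$ is used) and makes every remaining error integral bounded by $\sup_{[0,t]}\P(\Eeg^K)$. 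Iterating over $b=0,1,\dots,K$, and within each level over $\a=K-b,\dots,0$, controls the first two sums and the curl sum of $\Eeg^K(t)$; the intermediate Hardy-type terms follow from \eqref{w-embed}, the fractional-norm theory of \cite{KuPe2003}, and Lemma \ref{hardy}, and the remaining $\deta$-terms from $\deta(\cdot,t)=\int_0^t v(\cdot,s)\,ds$ together with $\int_0^\infty\alpha^{-2}<\infty$.

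This produces $\sup_{[0,t]}\Eeg^K(s)\lesssim\Eeg^K(0)+\vartheta\sup_{[0,t]}\Eeg^K(s)+\sup_{[0,t]}\P(\Eeg^K(s))$ on $[0,T]$ with $T$ independent of $\kappa$; choosing first $\vartheta\ll1$, then $\epsilon\ll\vartheta$, and running the standard continuation argument gives $\Eeg^K(t)\lesssim\epsilon$ for all $t\ge0$, which in particular improves \eqref{eta_bound2d}--\eqref{Jbound} and preserves the physical vacuum condition \eqref{degen}. Since $K>\tfrac{6\gamma-5}{\gamma-1}$ forces $\eta(\cdot,t)\in C^2(\overline\Omega_0)$, Theorem A.2 of \cite{CoSh2014} upgrades $\eta$ from $L^\infty$-in-time to $\eta\in C^0([0,\infty),C^2(\Omega_0))$, so $\Gamma(t)=\eta(\Gamma_0,t)$ is $C^2$ for all $t$; and since \eqref{ceuler03D}/\eqref{magoo1} is equivalent to \eqref{ce3d_gamma}, the composition $\xi(x,t)=\alpha(t)\Lambda\eta(x,t)$ solves \eqref{ceuler3d} globally. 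Finally, the uniform bound yields compactness of $(\eta_\kappa,v_\kappa)$, hence convergence in $C^2(\overline\Omega_0\times[0,T])$ to a solution with $\Eeg^K(t)\lesssim\epsilon$, and Theorem 1 of \cite{CoSh2012} gives uniqueness whenever $\Eeg^L$ stays bounded for one extra derivative, $L>\tfrac{6\gamma-5}{\gamma-1}+1$.

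\textbf{Main obstacle.} There is no conceptually new difficulty beyond the $\gamma=2$ case; the work is entirely bookkeeping of the three competing weights. The delicate point is the energy estimate: one must produce the fractional-power version of Lemma \ref{cor1} matched to the weight $\d^{\,b+1/(\gamma-1)}$ so that at top order no derivative lands on $\d$ and only tangential/curl errors survive, and one must simultaneously track the $\alpha$-time-weights (a power $\alpha^{(3\gamma-1)/2}$ on $v$ and on $\operatorname{curl}_\eta v$, none on $\deta$) so that every error integral carries an integrable power of $\alpha^{-1}$. The number $K$ of differentiated problems diverges as $\gamma\to1$, but for each fixed $\gamma>1$ it is finite, and once $K$ is chosen the estimates are structurally identical to those of Theorem \ref{mainthm}.
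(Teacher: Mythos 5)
Your proposal is correct and follows the paper's approach essentially verbatim: the paper treats $1<\gamma\le\tfrac53$ by applying $\bp^\a\nabla^b$ to the unrescaled momentum equation \eqref{magoo1}, testing against $\d^{b+1/(\gamma-1)}\bp^\a\nabla^b v^i$, invoking Lemma \ref{lemma_curlglow}, and otherwise repeating Steps 1--9 of the proof of Theorem \ref{mainthm}, with the $K$-dependent weights you describe and the constraint $K>\tfrac{6\gamma-5}{\gamma-1}$ derived exactly as you do from $v\in H^s$, $s>3$. You also correctly identify that the sign $2-\tfrac{3\gamma-1}{2}\ge0$ is what makes the undivided equation work here, and that the one piece of additional bookkeeping not written out in the paper is the fractional-power analogue of Lemma \ref{cor1} for the weight $\d^{\,b+1/(\gamma-1)}$ (and the matching choice of $\d$ so that $x+\tfrac{\gamma}{\gamma-1}\nabla\d=0$ and $\nabla^2\d$ is constant); the paper glosses over this in the same way, so your proposal matches its level of rigor.
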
 
\begin{remark} In fact, by
Lemma
\ref{lemma_affine}, our proof works for $\gamma$ in the range $ {\frac{4}{3}} < \gamma \le {\frac{5}{3}} $, while the result of \cite{HaJa2016} gives stability for general affine flows
when $1 < \gamma \le
{\frac{4}{3}} $.
\end{remark} 

\section*{Acknowledgments}
SS was supported by the National
Science Foundation grant DMS-1301380, and by the Department of Energy Advanced Simulation and
Computing (ASC) Program.   We thank the anonymous referee for many useful suggestions which have significantly improved the presentation.

\section*{Compliance with Ethical Standards}
Funding sources have been acknowledged.  There are no potential conflicts of interest.  The research did not involve human participants and/or animals.

\end{document}